\numberwithin{equation}{section}
\newtheorem{thm}{Theorem}[section]
\newtheorem{lem}{Lemma}[section]
\newtheorem{cor}{Corollary}[section]
\newtheorem{rem}{Remark}[section]
\newtheorem{example}{Example}[section]
\newtheorem{prop}{Proposition}[section]
\newtheorem{defn}{Definition}[section]
\begin{document}
\title{Curvature, diameter and signs of graphs}
\author{Wei Chen}
\address{\parbox[l]{\textwidth}{School of Mathematical Sciences,
University of Science and Technology of China, Hefei 230026, China}}
\email{mathchenwei@ustc.edu.cn}

\author{Shiping Liu}

\address{\parbox[l]{1\textwidth}{School of Mathematical Sciences,
University of Science and Technology of China, Hefei 230026, China}}
\email{spliu@ustc.edu.cn  }

\subjclass[2020]{39A12, 05C22, 47J10}
\keywords{eigenvalue-diameter estimates; curvature dimension inequality; signed graphs; $p$-Laplacian}

\begin{abstract}
We prove a Li-Yau type eigenvalue-diameter estimate for signed graphs. That is, the nonzero eigenvalues of the Laplacian of a non-negatively curved signed graph are lower bounded by $1/D^2$ up to a constant, where $D$ stands for the diameter. This leads to several interesting applications, including a volume estimate for non-negatively curved signed graphs in terms of frustration index and diameter, and a two-sided Li-Yau estimate for triangle-free graphs. Our proof is built upon a combination of Chung-Lin-Yau type gradient estimate and a new trick involving strong nodal domain walks of signed graphs. We further discuss extensions of part of our results to nonlinear Laplacians on signed graphs.
\end{abstract}

\maketitle


\vskip 0.5cm
\section{Introduction}\label{Introduction}
\vskip 0.3cm
There is a long history on the study of diameter bounds of graphs or finite Markov chains via their eigenvalues \cite{AM85, Chung89, Chung06, CFM94, CGY96, CGY97, HT01, LY10, LPS88, Mohar91, LP18}, see also the books \cite[Lemma 1.9 and Chapter 3]{Chung97} and \cite[Chapter 3]{Gri18} and references therein. Especially for highly symmetric $d$-regular graphs, it was established that 
\begin{equation}\label{eq:eigen_diameter}
    \lambda_2\geq \frac{C}{dD^2},
\end{equation}
where $\lambda_2$ is the first non-zero eigenvalue of the graph Laplacian, $D$ is the diameter of the graph and $C$ is an absolute constant. This was observed in the works of Aldous \cite[Lemma 3.6]{Aldous87} for Cayley graphs and  Babai-Szegedy \cite[Corollary 2.3]{BS91} for vertex-transitive graphs. Diaconis and Stroock \cite[Propositions 4 and 8]{DS91} pvoved for distance-transitive graphs that $\lambda_2\geq \frac{1}{D^2}$, that is, the dependence on vertex degree $d$ in (\ref{eq:eigen_diameter}) is not needed in this case.
The main technique involves estimating Cheeger constant in terms of the diameter and applying Cheeger inequality. 

This naturally reminds the Li-Yau inequality \cite{LY80} for the first nonzero eigenvalue of the Laplace-Beltrami operator on a compact Riemannian manifold with nonnegative Ricci curvature, which was further sharpened by Zhong-Yang \cite{ZY84}. To develop discrete analogues of their results, it is natural to consider proper discrete curvature notions. This was first carried out in the work of Chung-Yau \cite{ChungYau94, ChungYau96} on the so-called Ricci-flat graphs. A particular class of Ricci-flat graphs is consist of abelian Cayley graphs.
The proof relies on establishing a gradient estimate/Harnack inequality for the eigenfunctions. This has been further extended in Chung-Lin-Yau \cite{Chung 14}, to graphs satisfying the Bakry-\'Emery curvature dimension inequalities $CD(K,N)$. Notice that a $d$-regular Ricci-flat graph satisfies the Bakry-\'Emery curvature dimension inequalities $CD(0,d)$, see e.g. \cite{Chung 14, M18}. Chung-Lin-Yau estimate has been further improved very recently by Meng and Lin \cite{MengLin24}.

In this article, our new observation is that this principle (\ref{eq:eigen_diameter}) of estimating nonzero eigenvalues of non-negatively curved graphs from below by $1/D^2$ also works for graphs associated with sign structures. A signed graph $(G,\sigma)$ is a graph $G=(V,E)$ associated with a sign $\sigma: E\to \{\pm 1\}$. The sign of a cycle in $(G,\sigma)$ is defined to be the product of all signs of edges in it. A signed graph $(G,\sigma)$ is called balanced if the sign of every cycle is $+1$. Spectral theory of signed graphs has led to a number of breakthroughs in various topics, including the solutions to the  sensitivity conjecture \cite{Huang19} and the open problems on equiangular lines \cite{JTYZZ}. There are close connections between the spectral theory of signed graphs and that of hypergraphs \cite{JMZ22} and simplicial complexes \cite{Gournay16}. 
In addition, signed  graphs have many other applications  in applied research topics such as modeling  biological networks, social situations, community detection, see, e.g. \cite{Harary, AtayLiu} and the references therein.

The Bakry-\'Emery curvature dimension inequalities $CD^\sigma(K,N)$ with $K\in \mathbb{R}$ and $N\in (0,\infty]$ on a signed graph $(G,\sigma)$ has been introduced by the second named author, M\"unch and Peyerimhoff in \cite{Liu 19}, extending the classical curvature dimension inequalities $CD(K,N)$ on graphs \cite{Elw91, LY10, Sch99}. Under this curvature condition, Buser type and Lichnerowicz type eigenvalue estimates were proved in \cite{Liu 19}. Furthermore, the explicit calculation of this curvature of signed graphs is reformulated in \cite{HL22} as solving the smallest eigenvalue of a so-called curvature matrix. 

Our first result is the following Chung-Lin-Yau type inequality for signed graphs.
\begin{thm}\label{thm:intro_1}
  Let $(G,\sigma)$ be a finite connected signed graph satisfying $CD^\sigma(K,\infty)$ and $\lambda^\sigma$ be the first nonzero eigenvalue of the Laplacian $\Delta^\sigma$ on $(G,\sigma)$. Then we have 
  \begin{equation}\label{eq:LiYau_Lich}
      \lambda^\sigma\geq \frac{C_1}{dD^2}+C_2K,
  \end{equation}
  where $d$ is the maximal vertex degree of $G$, $D$ is the diameter of $G$ and $C_1,C_2$ are absolute constants.
\end{thm}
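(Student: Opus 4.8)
The plan is to transplant the Li--Yau/Chung--Lin--Yau scheme to the signed setting, replacing the telescoping of an eigenfunction along a geodesic by a telescoping of its \emph{parallel-transported} values along a carefully chosen walk. Fix an eigenfunction $f$ with $\Delta^\sigma f=-\lambda^\sigma f$, normalized so that $\|f\|_\infty=1$ and $f(x_0)=1$ at some vertex $x_0$. Two ingredients are needed: a pointwise gradient estimate coming from $CD^\sigma(K,\infty)$, and a combinatorial construction of a walk along which the transported values of $f$ change by a definite amount within distance $D$. Since $\Gamma^\sigma(f)$ and $f^2$ are invariant under switching, and $CD^\sigma(K,\infty)$ is itself switching-invariant, I will work in whichever gauge is convenient at each stage.

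First I would establish a Chung--Lin--Yau type gradient estimate of the form $\Gamma^\sigma(f)(x)\le (a\lambda^\sigma-bK)\,h(f(x))$, where $a,b>0$ depend only on the maximal degree $d$ and $h$ is an explicit nonnegative even function with $h\le 1$ (morally $h(t)=1-t^2$). The derivation maximizes a nonnegative, switching-invariant test function $P$ built from $\Gamma^\sigma(f)$ and $f^2$. The key clean point is that the signed maximum principle applies to nonnegative $P$: at a maximizer $x^\ast$ one has $\sigma_{xy}P(y)\le P(y)\le P(x^\ast)$ since $\sigma_{xy}\le 1$ and $P\ge 0$, whence $\Delta^\sigma P(x^\ast)\le 0$. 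Expanding through the Bochner identity
$$\tfrac12\Delta^\sigma\Gamma^\sigma(f)=\Gamma_2^\sigma(f)+\Gamma^\sigma(f,\Delta^\sigma f)=\Gamma_2^\sigma(f)-\lambda^\sigma\Gamma^\sigma(f),$$
inserting $CD^\sigma(K,\infty)$ in the form $\Gamma_2^\sigma(f)\ge K\Gamma^\sigma(f)$, and tracking the signed product rule (which on frustrated edges contributes an extra nonnegative term), yields at $x^\ast$ an algebraic inequality in $\lambda^\sigma$, $K$ and $f(x^\ast)$ whose solution bounds $P$, hence $\Gamma^\sigma(f)$, globally. The curvature enters as a negative multiple of $K$, producing exactly the additive $C_2K$ term.

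The telescoping then runs along a \emph{strong nodal domain walk}. For a walk $x_0,x_1,\dots,x_n$ set $s_i=\prod_{j=1}^i\sigma_{x_{j-1}x_j}$ and $g_i=s_if(x_i)$; then $g_i-g_{i-1}=s_{i-1}(\sigma_{x_{i-1}x_i}f(x_i)-f(x_{i-1}))$, so $|g_i-g_{i-1}|\le\sqrt{2\,\Gamma^\sigma(f)(x_{i-1})}$, and since $f(x_{i-1})^2=g_{i-1}^2$ the gradient estimate gives $|g_i-g_{i-1}|\le\sqrt{2(a\lambda^\sigma-bK)\,h(g_{i-1})}$. Summing over a walk of length $n$ from $x_0$ and using $\sqrt h\le 1$ yields $|g_0-g_n|\le n\sqrt{2(a\lambda^\sigma-bK)}$. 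If the walk can be taken with $n\le c\,D$ and with transported endpoint $g_n\le c_0<1=g_0$ for absolute constants $c,c_0$, then $(1-c_0)^2\le 2n^2(a\lambda^\sigma-bK)$, so that $\lambda^\sigma\ge \frac{(1-c_0)^2}{2a n^2}+\frac{b}{a}K\ge \frac{C_1}{dD^2}+C_2K$ once the degree dependence of $a$ is absorbed. A sharper $\arcsin$-type summation recovers better constants but is not needed for absolute $C_1,C_2$.

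The main obstacle is the existence of the strong nodal domain walk with both properties at once. Switching a shortest-path spanning tree rooted at $x_0$ to be all-positive turns transport along tree edges into the identity, so tree paths realize $g_n=f(\cdot)$ over distances $\le D$; in the balanced case orthogonality of $f$ to the (switched) ground state forces $f$ to take a value $\le 0$, giving $c_0=0$ immediately. In the unbalanced case the ground state may be sign-definite in every gauge, so a purely tree-based walk can fail to lower $g$, and one must route through a frustrated edge to flip the transported sign without exceeding length $\sim D$. I expect to handle this via a strong nodal domain decomposition of $(G,\sigma)$ relative to $f$: the transported values partition the graph into sign regions whose adjacency across frustrated edges is connected, and a shortest boundary-crossing walk concatenated with a tree path stays within a bounded multiple of $D$ while driving $g$ across zero. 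Making the dependence of $c_0$ and of the length bound on the nodal structure \emph{uniform}, so that $C_1,C_2$ are genuinely absolute, is the crux of the argument.
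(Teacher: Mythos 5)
Your overall strategy coincides with the paper's: a Chung--Lin--Yau gradient estimate from $CD^\sigma(K,\infty)$, then telescoping of the transported values $g_i=\sigma(P_{x_0x_i})f(x_i)$ along a walk, reducing everything to producing a walk of length $O(D)$ from the maximum point $x_0$ whose transported endpoint value is bounded away from $1$. However, the step you yourself flag as ``the crux'' --- finding such a walk with \emph{uniform} length and endpoint bounds in the unbalanced case --- is precisely the content of the paper's proof, and your sketch of it (nodal decomposition, ``shortest boundary-crossing walk,'' ``a bounded multiple of $D$'') does not close it. The paper's resolution is a clean dichotomy. Either some path of length $t\le D$ from $x_0$ has a strong nodal domain walk as its initial segment and then one failing edge, in which case the inductive sign-tracking gives $\sigma(P_{x_0x_t})f(x_t)\le 0$ directly; or \emph{every} path of length $\le D$ from $x_0$ is a strong nodal domain walk. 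In the second case $f$ vanishes nowhere, and some edge $\{x,y\}$ must fail to be a strong nodal domain walk --- otherwise switching by $\tau=f/|f|$ makes $\sigma$ all-positive, so $(G,\sigma)$ is balanced and $\tau$ is a zero eigenfunction, contradicting $0=\langle f,\tau\rangle=\sum_x|f(x)|d_x>0$. Both endpoints of that failing edge must then lie at distance exactly $D$ from $x_0$ (else a geodesic to one of them extended by the edge violates the case assumption), so a geodesic to $x$ followed by $\{x,y\}$ is a walk of length $D+1$ whose prefix is a strong nodal domain walk and whose transported endpoint value is $\le 0$. Thus $c_0=0$ and the length is at most $D+1$ in all cases, with no dependence on the nodal structure; this is what makes $C_1,C_2$ absolute, and it is the argument your proposal is missing.

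A secondary issue: your Bochner identity $\tfrac12\Delta^\sigma\Gamma^\sigma(f)=\Gamma_2^\sigma(f)+\Gamma^\sigma(f,\Delta^\sigma f)$ is not the identity that holds here. In the paper's definition $\Gamma_2^\sigma$ is built from the \emph{unsigned} Laplacian acting on $\Gamma^\sigma(f)$, i.e.\ $\tfrac12\Delta\Gamma^\sigma(f)=\Gamma_2^\sigma(f)+\Gamma^\sigma(f,\Delta^\sigma f)$, and this choice is essential. Consequently no ``signed maximum principle'' is needed at all: one applies the ordinary maximum principle for $\Delta$ to the nonnegative function $|\nabla^\sigma f|^2+\alpha\lambda^\sigma f^2$ at its maximizer, inserts $CD^\sigma(K,N)$, and obtains the uniform bound $|\nabla^\sigma f|^2(x)\le\bigl(((2+\varepsilon)^2-\tfrac4N)\tfrac{\lambda^\sigma}{\varepsilon}-(\tfrac4\varepsilon+2)K\bigr)\max_z f^2(z)$. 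Your proposed detour through $\Delta^\sigma P\le 0$ and an extra ``frustrated-edge term'' would need to be reworked to match this definition; as written the derivation of your gradient estimate does not go through.
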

This result will be restated in a more precise way in Theorem \ref{eigenvalue estimate}. Indeed, the smallest eigenvalue of $\Delta^\sigma$ vanishes if and only if $(G,\sigma)$ is balanced. That is, $\lambda^\sigma$ refers to the smallest eigenvalue of $\Delta^\sigma$ when $(G,\sigma)$ is unbalanced and to the second smallest eigenvalue when $(G,\sigma)$ is balanced. The estimate (\ref{eq:LiYau_Lich}) is a mixture of Li-Yau type and Lichnerowicz type eigenvalue estimates. It is first established by Chung, Lin and Yau \cite{Chung 14} for graphs with the all-positive sign. 

Theorem \ref{thm:intro_1} has several interesting consequences. Viewed as a diameter lower bound estimate, one can optimize the diameter lower bound over all possible choices of signs on the given graph $G$. This can possibly lead to better diameter estimates than only considering the all-positive sign. See Subsection \ref{section:diameter} for more detailed discussions. We point out that, in contrast, Bonnet-Myers type diameter upper bound estimate in terms of curvature cannot hold for signed graphs, as observed in \cite[Section 8]{HL22}. Indeed, there exists an infinite signed graph $(G,\sigma)$ satisfying $CD^\sigma(K,\infty)$ for $K>0$.

Another interesting result derived from Theorem \ref{thm:intro_1} is a volume estimate of non-negatively curved unbalanced signed graphs stated below.

\begin{thm}
    Let $(G,\sigma)$ be a finite connected unbalanced signed graph satisfying $CD^\sigma(0,\infty)$. Then the volume $\mathrm{vol}(G)$ of the graph $G$ satisfies
    \begin{equation}
        \mathrm{vol}(G)\leq C \iota^\sigma(G) dD,
    \end{equation}
    where $d$ is the maximal vertex degree of $G$, $D$ is the diameter of $G$, and $\iota^\sigma(G)$ is the frustration index of the signed graph $(G,\sigma)$.
\end{thm}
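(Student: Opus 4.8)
The plan is to derive the estimate from a two-sided control of the first nonzero eigenvalue $\lambda^\sigma$, sandwiching it between a curvature-driven lower bound and a frustration-driven upper bound. Since $(G,\sigma)$ is unbalanced, $\lambda^\sigma$ is the smallest eigenvalue of $\Delta^\sigma$ and is strictly positive. Because $(G,\sigma)$ satisfies $CD^\sigma(0,\infty)$, Theorem \ref{thm:intro_1} with $K=0$ immediately supplies the lower bound
$$\lambda^\sigma \ge \frac{C_1}{dD^2}.$$
It then suffices to produce a matching upper bound of the form $\lambda^\sigma \le C_2\,\iota^\sigma(G)/(\mathrm{vol}(G)\,D)$, since combining the two inequalities and rearranging yields $\mathrm{vol}(G) \le (C_2/C_1)\,\iota^\sigma(G)\,dD$.

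For the upper bound I would work in the switched gauge. After applying an optimal switching function $\epsilon\colon V\to\{\pm1\}$ realizing the frustration index, the signed graph carries exactly $\iota^\sigma(G)$ negative edges, and the Rayleigh quotient reads
$$\lambda^\sigma = \min_{f\neq 0}\frac{\sum_{xy\in E}\bigl(f(x)-\sigma_{xy}f(y)\bigr)^2}{\sum_{x\in V}d_x f(x)^2}.$$
The crudest test function, $f\equiv\epsilon$ (constant in the switched gauge), makes every positive edge contribute zero and every negative edge contribute $4$, giving the elementary bound $\lambda^\sigma\le 4\iota^\sigma(G)/\mathrm{vol}(G)$. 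Fed into the sandwich this already produces $\mathrm{vol}(G)\le C\,\iota^\sigma(G)\,dD^2$, which has the correct shape but loses one factor of $D$.

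To recover the sharp exponent I would replace the hard sign pattern by a \emph{spread-out} test function. The key point is that the switching vector must jump by $2$ across a single edge to satisfy a frustrated edge, whereas if the same jump is distributed along a path $v_0,\dots,v_L$ with $L\asymp D$, the Cauchy--Schwarz inequality bounds its cost from below only by
$$\sum_{j=0}^{L-1}\bigl(f(v_j)-f(v_{j+1})\bigr)^2 \ \ge\ \frac{1}{L}\bigl(f(v_0)-f(v_L)\bigr)^2 \ \asymp\ \frac{1}{D},$$
with equality for linear interpolation. Routing each of the $\iota^\sigma(G)$ sign changes along an essentially geodesic path while keeping $|f|\asymp 1$ on the bulk leaves the denominator comparable to $\mathrm{vol}(G)$ and shrinks the numerator to $O(\iota^\sigma(G)/D)$, which is exactly the improved bound $\lambda^\sigma\le C_2\,\iota^\sigma(G)/(\mathrm{vol}(G)D)$.

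The main obstacle is precisely this construction: one must exhibit the transition paths, control their mutual overlaps, and guarantee that the bulk where $|f|\asymp 1$ still carries a definite proportion of the volume. This is where non-negative curvature is indispensable and where I expect the strong nodal domain walk technique to enter. Indeed, the improved upper bound must fail without a curvature hypothesis: attaching a long balanced path to an unbalanced expander yields a graph of arbitrarily large diameter whose localized frustration cannot be spread out, forcing $\lambda^\sigma$ to stay far above $\iota^\sigma(G)/(\mathrm{vol}(G)D)$. The Chung--Lin--Yau gradient estimate underlying Theorem \ref{thm:intro_1} provides the quantitative Lipschitz control of the eigenfunction needed to organize its nodal domains into sign-definite regions of controlled width $\asymp D$, and it is the careful bookkeeping of these regions, rather than any single inequality, that I anticipate to be the technical heart of the argument.
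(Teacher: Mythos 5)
Your overall sandwich strategy (curvature lower bound on $\lambda^\sigma$ versus a frustration-driven upper bound) is the right shape, and your lower bound $\lambda^\sigma\geq C_1/(dD^2)$ is exactly what the paper uses. But the proof has a genuine gap: the entire argument hinges on the improved upper bound $\lambda^\sigma\leq C_2\,\iota^\sigma(G)/(\mathrm{vol}(G)\,D)$, and you do not prove it. You correctly observe that the switching function as a test function only gives $\lambda^\sigma\leq 4\iota^\sigma(G)/\mathrm{vol}(G)$, which loses a factor of $D$, and you then propose to recover that factor by constructing a test function that spreads each frustrated edge's sign change over a geodesic of length $\asymp D$. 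This construction is exactly the step you label ``the main obstacle'' and ``the technical heart,'' and it is left entirely unsubstantiated: you would need to exhibit the transition paths, show they can be chosen with controlled overlap even when all frustrated edges are concentrated near a single vertex, and show that the region where $|f|\asymp 1$ retains a definite fraction of the volume. None of this is obvious, and your suggestion that the strong nodal domain walk technique would supply it is misplaced --- in the paper that technique serves the eigenvalue \emph{lower} bound, not any test-function construction.

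The paper closes this gap by a completely different and much shorter mechanism: the Buser-type inequality of Liu--M\"unch--Peyerimhoff, which under $CD^\sigma(0,\infty)$ gives $\lambda^\sigma\leq 16(\ln 2)\,d\,(h^\sigma)^2$, combined with the trivial bound $h^\sigma\leq \iota^\sigma(G)/\mathrm{vol}(G)$ obtained by taking $\Omega=V$ in the definition of the signed Cheeger constant. The point is that the Buser inequality is \emph{quadratic} in $h^\sigma$, so one gets $\lambda^\sigma\lesssim d\,\bigl(\iota^\sigma(G)/\mathrm{vol}(G)\bigr)^2$; pitted against $\lambda^\sigma\gtrsim 1/(dD^2)$ and taking square roots, this yields $\mathrm{vol}(G)\lesssim d\,\iota^\sigma(G)\,D$ with only one power of $D$ --- no improved linear-in-$\iota^\sigma$ upper bound on $\lambda^\sigma$ is ever needed. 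If you want to salvage your approach, you would either have to prove your intermediate spectral upper bound in full (which may be of independent interest but is a substantial undertaking), or replace it with the Buser inequality as the paper does.
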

This result will be restated in a more precise way in Theorem \ref{thm:volume}.
Notice that the frustration index $\iota^\sigma(G)$ (see the definition (\ref{eq:frustration}) below) measures how far the signed graph $(G,\sigma)$ is from being balanced. And  $\iota^\sigma(G)=0$ if and only if $(G,\sigma)$ is balanced. This estimate brings new insight to the curvature condition $CD^\sigma(K,N)$ on signed graphs. Quite often this estimate can provide a criterion for checking whether a given signed graph satisfies $CD^\sigma(0,\infty)$ or not (See Example \ref{example:criterion}). Roughly speaking, for a signed graph $(G,\sigma)$ satisfying $CD^\sigma(0,\infty)$, if its volume is much larger than its diameter, then its frustration index should be large.

Theorem \ref{thm:intro_1} also shed new light on the eigenvalue estimates of the unsigned graph Laplacians. 
For a graph $G$ associated with the all-negative sign $\sigma_{-}$, it is known that the first nonzero eigenvalue of the corresponding Laplacian $\Delta^{\sigma_{-}}$ equals the first nonzero one of the sequence $2-\lambda_{|V|-i+1}$, $i=1,2,\ldots,|V|$ with $\lambda_i$'s being the eigenvalues of the graph Laplacian $\Delta$ ($\Delta^\sigma$ with the all-positive sign $\sigma$). 
Building upon this connection, we have the following result. 

\begin{thm}\label{thm:intr_3}
    Let $G$ be a finite connected triangle-free non-bipartite graph satisfying the classical curvature dimension inequalities $CD(0,\infty)$. Then the eigenvalues $0=\lambda_1<\lambda_2\leq\cdots\leq\lambda_{|V|}$ of the Laplacian $\Delta$ satisfy
    \begin{equation}
        \frac{c}{dD^2}\leq \lambda_2\leq \cdots\leq \lambda_{|V|}\leq 2-\frac{C}{dD^2},
    \end{equation}
    where $d$ is the maximal vertex degree of the graph $G$, $D$ is the diameter of $G$ and $c,C$ are absolute constants.
\end{thm}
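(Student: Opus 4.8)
The plan is to obtain the two bounds by applying Theorem~\ref{thm:intro_1} to the two canonical signatures on $G$: the all-positive sign $\sigma_+$ and the all-negative sign $\sigma_-$. For the lower bound there is nothing to do beyond unwinding definitions: $(G,\sigma_+)$ is the ordinary graph, so $CD(0,\infty)$ is the same as $CD^{\sigma_+}(0,\infty)$ and $\lambda^{\sigma_+}=\lambda_2$. Theorem~\ref{thm:intro_1} with $K=0$ then yields $\lambda_2\geq C_1/(dD^2)$, which is the left-hand inequality with $c=C_1$. Since $\lambda_2\leq\cdots\leq\lambda_{|V|}$ is automatic, everything reduces to bounding $\lambda_{|V|}$ from above.

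For the upper bound I would pass to $\sigma_-$. Because the all-negative signed adjacency matrix is $-A$, the normalized Laplacians satisfy $\Delta^{\sigma_-}=2I-\Delta$ for arbitrary graphs, regular or not, so the spectrum of $\Delta^{\sigma_-}$ is exactly $\{2-\lambda_i\}$. As $G$ is non-bipartite, $(G,\sigma_-)$ is unbalanced and $\lambda_{|V|}<2$; hence the smallest eigenvalue of $\Delta^{\sigma_-}$ is positive and coincides with its first nonzero eigenvalue, giving $\lambda^{\sigma_-}=2-\lambda_{|V|}$ (this is the connection recorded above Theorem~\ref{thm:intr_3}). Consequently an upper bound on $\lambda_{|V|}$ is the same as a lower bound on $\lambda^{\sigma_-}$, which Theorem~\ref{thm:intro_1} supplies as soon as we know that $(G,\sigma_-)$ satisfies $CD^{\sigma_-}(0,\infty)$.

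The crux, and the step I expect to be the main obstacle, is therefore the curvature transfer $CD^{\sigma_+}(0,\infty)\Rightarrow CD^{\sigma_-}(0,\infty)$, which is exactly where triangle-freeness enters. I would prove the sharper statement that, for triangle-free $G$, the signatures $\sigma_+$ and $\sigma_-$ have identical Bakry-\'Emery curvature at every vertex. The key structural fact is that $\Gamma_2^\sigma f(x)$ is local to the ball $B_2(x)$ and never traverses an edge inside the $2$-sphere $S_2(x)$: unwinding $\Gamma_2^\sigma=\tfrac12\Delta^\sigma\Gamma^\sigma-\Gamma^\sigma(\,\cdot\,,\Delta^\sigma\,\cdot\,)$ only propagates $f$ along paths of length at most two emanating from $x$. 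As a result every sign-dependent coefficient of the quadratic form $\Gamma_2^\sigma f(x)$---equivalently, every entry of the curvature matrix of~\cite{HL22}---carries signs only through closed cycles based at $x$ of length $3$ (triangles $xyy'$) or $4$ (quadrilaterals $xyzy'$ with $z\in S_2(x)$). Replacing $\sigma_+$ by $\sigma_-$ multiplies the sign of such a cycle by $(-1)^{\text{length}}$, so length-$4$ contributions are unchanged while the only flipped contributions come from triangles. Triangle-freeness annihilates precisely these, leaving the curvature matrix---and hence the curvature---untouched.

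Granting this, $(G,\sigma_-)$ inherits $CD^{\sigma_-}(0,\infty)$ from the hypothesis, and Theorem~\ref{thm:intro_1} gives $2-\lambda_{|V|}=\lambda^{\sigma_-}\geq C_1/(dD^2)$, i.e.\ $\lambda_{|V|}\leq 2-C_1/(dD^2)$; with $C=C_1$ this is the right-hand inequality and, combined with the lower bound, completes the proof. The delicate point to get right is the claim that no odd cycle longer than a triangle can contribute a sign to the $2$-local curvature---in particular that a $5$-cycle through $x$ is invisible to $\Gamma_2^\sigma f(x)$---which is guaranteed by the absence of $S_2(x)$-internal edges in the computation.
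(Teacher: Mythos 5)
Your proposal is correct and follows essentially the same route as the paper: the lower bound from Theorem~\ref{thm:intro_1} with the all-positive sign, the upper bound from the all-negative sign via the spectral correspondence $\lambda_i^{\sigma_-}=2-\lambda_{|V|-i+1}$ (Corollary~\ref{cor:allnegative}), and the curvature transfer $CD(0,\infty)\Leftrightarrow CD^{\sigma_-}(0,\infty)$ for triangle-free graphs because all $4$-cycles remain positive under $\sigma_-$. The only difference is that you sketch a proof of that transfer from the $2$-locality of $\Gamma_2^\sigma$, whereas the paper simply invokes \cite[Proposition 3.6]{Liu 19}, which is exactly this statement.
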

This can be considered as a two-sided Li-Yau eigenvalue estimate. We comment that the assumption of non-bipartiteness is not essential.
Recall that a graph $G$ is bipartite if and only if it has no odd cycles, which is further equivalent to the property that $\lambda_{|V|}=2$. In fact, for bipartite graphs, the eigenvalues are symmetric with respect to $1$, that is, $\lambda_i=2-\lambda_{|V|-i+1}$ for any $i$. Therefore, for any bipartite graph $G$ satisfying $CD(0,\infty)$, we obtain the following two-sided Li-Yau estimate for free:
\begin{equation*}
      \frac{c}{dD^2}\leq \lambda_2\leq \cdots\leq \lambda_{|V|-1}\leq 2-\frac{c}{dD^2},
\end{equation*}
with $c$ being an absolute constant.

Our proofs are built upon the geometric analytic methods from Chung-Yau \cite{ChungYau94, ChungYau96} and Chung-Lin-Yau \cite{Chung 14}, that is, gradient estimate/Harnack inequality for eigenfunctions, and a new trick of applying gradient estimates along the strong nodal domain walks for the eigenfunctions on signed graphs. The concept of strong nodal domain walks were introduced in \cite{Ge 23}. 

Furthermore, we discuss possible extensions of our results to nonlinear $p$-Laplacians of signed graphs. Applying our trick involving strong nodal domain walks, we obtain an eigenvalue-diameter-volume estimate (see Theorem \ref{p-eigenvalue estimate}). We introduce curvature dimension inequalities $CD_p^\sigma(K,N)$ with $p>1$, $K\in \mathbb{R}$ and $N\in(0,\infty]$ involving the $p$-Laplacians on a signed graph $(G,\sigma)$ (see Definition \ref{def:CDpsigma]}). This concept is a generalization of various notions defined in \cite{Liu 19,Wang 21,Wang 24}. In terms of this new defined curvature notion, we prove a Lichnerowicz type eigenvalue estimate for $p$-Laplacian of a signed graph (see Theorem \ref{thm:Lich}). However, a proper gradient estimate for an eigenfunction of the $p$-Laplacian on a signed graph $(G,\sigma)$ under condition $CD_p^\sigma(K,N)$ is still missing. 

\vskip 0.5cm
\section{Preliminaries}

We first prepare some notations on graphs. Let $G=(V,E)$ be a finite simple connected graph, where $V$ and $E$ stand for the sets of vertices and edges, respectively. We write $x\sim y$ if $\{x,y\}\in E$, and say in this case that $x$ is adjacent to $y$. We say the graph $G$ is locally finite if for any given $x\in V$, the set $\{y\in V: y\sim x\}$ is finite. We denote by $d_x:=|\{y\in V: y\sim x\}|$ the vertex degree, by $d:=\max_{x\in V}d_x$ the maximal vertex degree, and by $\mathrm{vol}(G):=\sum_{x\in V}d_x$ the volume of the graph $G$.

The following sign structure turns out to be very helpful. We assign via a map $\sigma:E\rightarrow \{-1,1\}$ to each edge a sign. We write $\sigma_{xy}:=\sigma(\{x,y\})$ for short. We call $(G,\sigma)$ a signed graph. The sign of a cycle in $G$ is defined to be the product of all edge signs of the cycle. A signed graph $(G,\sigma)$ is called \emph{balanced} if the sign of every cycle in $G$ is $+1$ \cite{Harary}. A function $\tau: V\to \{+1,-1\}$ on the vertices is called a \emph{switching function}. For given $\sigma: E\to \{+1,-1\}$ and $\tau: V\to \{+1,-1\}$, the operation of switching $\sigma$ by $\tau$ refers to the operation of replacing $\sigma$ by 
the new map $\sigma^\tau: E\to \{+1,-1\}$ defined via
\[\sigma^\tau_{xy}:=\tau(x)\sigma_{xy}\tau(y),\,\,\text{for any}\,\,\{x,y\}\in E.\]
We say two signs $\sigma,\sigma': E\to\{+1,-1\}$ are \emph{switching equivalent} if there exists a switching function $\tau$ such that $\sigma'=\sigma^\tau$. 

The following characterization of balanced signed graphs via switching is due to Zaslavsky \cite[Corollary 3.3]{Zaslavsky}.
\begin{lem}[Zaslavsky's switching lemma]
A signed graph $(G,\sigma)$ is balanced if
and only if $\sigma$ is switching equivalent to the all $+1$ signature,
\end{lem}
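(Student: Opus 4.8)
The plan is to prove both implications directly, with the elementary observation that switching preserves the sign of every cycle serving as the common engine. First I would record this invariance: for a cycle $C=(v_0,\ldots,v_{k-1},v_0)$ and any switching function $\tau$,
\[
\prod_{i=0}^{k-1}\sigma^\tau_{v_iv_{i+1}}=\Big(\prod_{i=0}^{k-1}\tau(v_i)^2\Big)\prod_{i=0}^{k-1}\sigma_{v_iv_{i+1}}=\prod_{i=0}^{k-1}\sigma_{v_iv_{i+1}},
\]
where each $\tau(v_i)$ appears squared because every vertex of a cycle lies on exactly two of its edges. The ``if'' direction is then immediate: if $\sigma=\sigma_0^\tau$ with $\sigma_0\equiv+1$, then every cycle has sign $+1$ under $\sigma_0$ and hence, by the invariance, under $\sigma$, so $(G,\sigma)$ is balanced.

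For the ``only if'' direction I would assume $(G,\sigma)$ balanced and construct the switching function explicitly. Using connectedness, I would fix a spanning tree $T$ rooted at some $r$, set $\tau(r):=+1$, and propagate outward by $\tau(y):=\tau(x)\sigma_{xy}$ whenever $x$ is the parent of $y$ in $T$; this is well defined since each non-root vertex has a unique parent. By construction every tree edge satisfies $\sigma^\tau_{xy}=\tau(x)\sigma_{xy}\tau(y)=\tau(x)^2\sigma_{xy}^2=+1$.

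The hard part, and really the only substantive step, will be to verify that every non-tree edge is also switched to $+1$. For $\{x,y\}\in E\setminus T$, I would form the cycle $C$ consisting of this edge together with the unique $x$--$y$ path in $T$. Balance gives $C$ sign $+1$ under $\sigma$, so by the invariance $C$ has sign $+1$ under $\sigma^\tau$ as well; but under $\sigma^\tau$ all tree edges are positive, so the sign of $C$ collapses to the single factor $\sigma^\tau_{xy}$, forcing $\sigma^\tau_{xy}=+1$. Thus $\sigma^\tau\equiv+1$ and $\sigma$ is switching equivalent to the all-$+1$ signature. The cycle-sign invariance of switching is exactly the tool that converts the global balance hypothesis into the required edgewise identity.
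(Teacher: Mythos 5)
Your proof is correct and complete. The paper does not prove this lemma at all---it simply cites Zaslavsky's original work---so there is no in-paper argument to compare against; what you have written is the standard and essentially canonical proof (switching invariance of cycle signs for the easy direction, and the spanning-tree construction of $\tau$ plus the fundamental-cycle argument for non-tree edges in the converse). The one hypothesis you rely on, connectedness of $G$, is part of the paper's standing assumptions, so the spanning-tree step is justified; for a disconnected graph one would simply apply the same construction on each component.
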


On a signed graph $(G,\sigma)$, we consider the following signed Laplacian $\Delta^{\sigma}$:
\begin{equation}\label{Laplacian}
  \Delta^{\sigma}f(x):=\frac{1}{d_{x}}\sum_{y\sim x}(\sigma_{xy}f(y)-f(x)),\,\,\text{for any}\,\,f:V\to\mathbb{R}.
\end{equation}
If $\sigma_{xy}=+1$ for any edge $\{x,y\}$, the signed Laplacian coincides with the classical graph Laplacian, for which we denote simply by $\Delta$.

We say a function $f:V\rightarrow\mathbb{R}$ is an eigenfunction of $\Delta^\sigma$ corresponding to the eigenvalue $\lambda^\sigma$ if it holds for any $x\in V$ that
\begin{equation*}
  -\Delta^{\sigma}f(x)=\lambda^{\sigma}f(x).
\end{equation*}
We list the eigenvalues with multiplicity as below:
\[\lambda_1^\sigma\leq \lambda_2^\sigma\leq \cdots\leq \lambda_{|V|}^\sigma.\]
It is direct to check that $0\leq \lambda^\sigma_1\leq \lambda_{|V|}^\sigma\leq 2$ and for a switching function $\tau$ that
\[\Delta^{\sigma^\tau}f=\tau\Delta^\sigma (\tau f),\]
where $(\tau f)(x):=\tau(x)f(x)$. Therefore the eigenvalues of $\Delta^\sigma$ are invariant under switching operations of the sign. Indeed, the first eigenvalue $\lambda_1^\sigma=0$ if and only if the signed graph $(G,\sigma)$ is balanced (see e.g., \cite{AtayLiu}).

Next, we recall the Bakry-\'Emery curvature dimension inequalities for signed graph introduced in \cite{Liu 19}. We first
recall the \emph{carr\'e du champ operators} on signed graphs. Indeed, for $f,g:V\rightarrow\mathbb{R}$, these operators are defined as
\begin{equation*}
  \Gamma^{\sigma}(f,g)(x):=\frac{1}{2}\left\{\Delta(f(x)g(x))-g(x)\Delta^{\sigma}f(x)-f(x)\Delta^{\sigma}g(x)\right\},
\end{equation*} 
and 
\begin{equation*}
  \Gamma_{2}^{\sigma}(f,g)(x):=\frac{1}{2}\left\{\Delta\Gamma^{\sigma}(f,g)(x)-\Gamma^{\sigma}(g,\Delta^{\sigma}f)(x)-\Gamma^{\sigma}(f,\Delta^{\sigma}g)(x)\right\}.
\end{equation*} 
We emphasize that utilization of both the Laplacian operator $\Delta$ and and the signed Laplacian operator $\Delta^{\sigma}$ in the above definitions is crucial.

The following lemma expressing $\Gamma^{\sigma}$ alternatively follows from a direct calculation.
\begin{lem} For any functions $f,g: V\to \mathbb{R}$, we have
  \begin{equation*}
      \Gamma^{\sigma}(f,g)(x)=\frac{1}{2d_{x}}\sum_{y\sim x}(\sigma_{xy} f(y)-f(x))(\sigma_{xy}g(y)-g(x)).
  \end{equation*}
  In particular, we obtain
  \begin{equation*}
     \Gamma^{\sigma}(f,f)(x)=\frac{1}{2d_{x}}\sum_{y\sim x}(\sigma_{xy}f(y)-f(x))^{2}:=\frac{1}{2}|\nabla^{\sigma}f|^{2}(x). 
  \end{equation*}
  \end{lem}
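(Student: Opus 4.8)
The plan is to prove this by direct expansion of the defining formula for $\Gamma^\sigma$, substituting the explicit expressions for $\Delta$ and $\Delta^\sigma$. Recall that $\Delta$ is the classical Laplacian, so $\Delta(fg)(x)=\frac{1}{d_x}\sum_{y\sim x}(f(y)g(y)-f(x)g(x))$, while the two remaining terms use the signed Laplacian. First I would write out each of the three terms in the definition over the common denominator $d_x$, keeping each summand indexed by the neighbor $y\sim x$.

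Concretely, I would record
\begin{equation*}
  g(x)\Delta^\sigma f(x)=\frac{1}{d_x}\sum_{y\sim x}\bigl(\sigma_{xy}f(y)g(x)-f(x)g(x)\bigr),
\end{equation*}
and the symmetric expression for $f(x)\Delta^\sigma g(x)$. Substituting these together with $\Delta(fg)(x)$ into $2\Gamma^\sigma(f,g)(x)=\Delta(fg)(x)-g(x)\Delta^\sigma f(x)-f(x)\Delta^\sigma g(x)$, the three copies of the diagonal term $f(x)g(x)$ combine (two cancellations against one) to leave a single $+f(x)g(x)$ inside each summand, yielding
\begin{equation*}
  2\Gamma^\sigma(f,g)(x)=\frac{1}{d_x}\sum_{y\sim x}\bigl(f(y)g(y)-\sigma_{xy}f(y)g(x)-\sigma_{xy}f(x)g(y)+f(x)g(x)\bigr).
\end{equation*}

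The second step is to recognize the summand as a product. Expanding $(\sigma_{xy}f(y)-f(x))(\sigma_{xy}g(y)-g(x))$ produces exactly the same four terms, with the leading term carrying a factor $\sigma_{xy}^2$. Here is the one genuinely load-bearing observation: since $\sigma_{xy}\in\{+1,-1\}$ we have $\sigma_{xy}^2=1$, so the leading term is precisely $f(y)g(y)$ and the expansion matches term by term. Dividing by $2$ gives the claimed identity for $\Gamma^\sigma(f,g)$, and setting $g=f$ gives the ``in particular'' statement, which is nonnegative and thus justifies the notation $\tfrac12|\nabla^\sigma f|^2$. There is no real obstacle here beyond careful bookkeeping of the signs and the cancellation of diagonal terms; the only conceptual point is the use of $\sigma_{xy}^2=1$, which is exactly where the sign structure interacts harmlessly with the squaring.
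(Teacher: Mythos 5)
Your proof is correct and is exactly the direct calculation the paper has in mind (the paper states the lemma "follows from a direct calculation" and omits the details): expanding the three Laplacian terms, cancelling the diagonal $f(x)g(x)$ contributions, and using $\sigma_{xy}^2=1$ to match the expansion of the product. Nothing is missing.
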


We write $\Gamma^{\sigma}(f)=\Gamma^{\sigma}(f,f)$ and $\Gamma_{2}^{\sigma}(f)=\Gamma_{2}^{\sigma}(f,f)$, for short. 
\begin{defn}[{\cite[Definition 3.2]{Liu 19}}]
   Let $(G,\sigma)$ be a signed graph. We say $(G,\sigma)$ satisfies curvature-dimension inequality $CD^\sigma(K, N)$ for $N\in (0,\infty]$ at a vertex $x$ if it holds that
    \begin{equation*}
        \Gamma_{2}^{\sigma}(f)(x)\geq\frac{1}{N}(\Delta^{\sigma}f)^{2}(x)+K\Gamma^{\sigma}(f)(x),
    \end{equation*}
    for any function $f: V\to \mathbb{R}$. Here we use the convention $\frac{1}{N}=0$ in the case $N=\infty$.
    We say the signed graph $(G,\sigma)$ satisfies $CD^\sigma(K, N)$, if it satisfies $CD^\sigma(K, N)$ at every vertex.
\end{defn}
In case that $\sigma$ is the all-positive sign, $CD^\sigma(K,N)$ condition reduces to the classical Bakry-\'Emery curvature dimension inequality $CD(K,N)$ introduced on graphs in \cite{Elw91,LY10,Sch99}. 
Notice that the property of satisfying curvature dimension inequality $CD^\sigma(K,N)$ is invariant under switching operations of the sign \cite[Proposition 3.5]{Liu 19}. If the signs of $3$- and $4$-cycles of a signed graph $(G,\sigma)$ are all positive, it satisfies $CD^\sigma(K,N)$ if and only if the underlying graph $G$ satisfies the classical Bakry-\'Emery curvature dimension inequality $CD(K,N)$ \cite[Proposition 3.6]{Liu 19}. Any locally finite signed graph $(G,\sigma)$ satisfies \[CD^\sigma(-1+\frac{2}{d},2),\] where $d$ stands for the maximal vertex degree \cite[Corollary 3.8]{Liu 19}.
The following Lichnerowicz type eigenvalue estimate was proved in \cite{Liu 19}.
\begin{lem}[{\cite[Theorem 6.1]{Liu 19}}]\label{curvature estimate}
 Let $(G,\sigma)$ be a connected signed graph satisfying $CD^\sigma(K, N)$ with $K\in \mathbb{R}$ and $N\in (0,\infty]$. Then it holds for any non-zero eigenvalue $\lambda^{\sigma}$ of $\Delta^{\sigma}$ that
 \begin{equation*}
     \frac{N-1}{N}K\leq\lambda^{\sigma},
 \end{equation*}
 where we use the convention $\frac{N-1}{N}=1$ in the case $N=\infty$.
\end{lem}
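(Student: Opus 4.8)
The plan is to run the standard integrated Bochner (Bakry--\'Emery) argument: sum the curvature--dimension inequality against the degree measure and substitute the eigenvalue equation. Fix a nonzero eigenvalue $\lambda^\sigma$ of $\Delta^\sigma$ with eigenfunction $f\not\equiv 0$, so that $\Delta^\sigma f=-\lambda^\sigma f$. Since every eigenvalue lies in $[0,2]$, being nonzero forces $\lambda^\sigma>0$, which is exactly what will permit the final division. Throughout I work with the degree inner product $\langle u,v\rangle:=\sum_{x\in V}d_x u(x)v(x)$ and write $\|u\|^2=\langle u,u\rangle$.

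The heart of the matter is two summation-by-parts identities. First, the unsigned Laplacian integrates to zero against the degree measure: $\sum_{x\in V}d_x\Delta h(x)=\sum_{x\in V}\sum_{y\sim x}(h(y)-h(x))=0$, by antisymmetry of the summand under interchanging the two endpoints of an edge. Second, $\Delta^\sigma$ is self-adjoint for $\langle\cdot,\cdot\rangle$, i.e. $\langle\Delta^\sigma u,v\rangle=\langle u,\Delta^\sigma v\rangle$, which follows from $\sigma_{xy}=\sigma_{yx}$ after relabeling endpoints. Summing the defining identity $\Gamma^\sigma(f,g)=\tfrac12\{\Delta(fg)-g\Delta^\sigma f-f\Delta^\sigma g\}$ against $d_x$, the $\Delta(fg)$ term vanishes by the first identity and self-adjointness merges the remaining two, giving $\sum_{x\in V}d_x\Gamma^\sigma(f,g)(x)=-\langle g,\Delta^\sigma f\rangle$. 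Taking $g=f$ and using $\Delta^\sigma f=-\lambda^\sigma f$ yields $\sum_{x\in V}d_x\Gamma^\sigma(f)(x)=\lambda^\sigma\|f\|^2$. Similarly, summing $\Gamma_2^\sigma(f)=\tfrac12\{\Delta\Gamma^\sigma(f)-2\Gamma^\sigma(f,\Delta^\sigma f)\}$ against $d_x$, the $\Delta\Gamma^\sigma(f)$ term again vanishes and the cross term is computed from the previous identity with $g=\Delta^\sigma f$, producing the integrated Bochner identity $\sum_{x\in V}d_x\Gamma_2^\sigma(f)(x)=\|\Delta^\sigma f\|^2=(\lambda^\sigma)^2\|f\|^2$.

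With these identities I would sum the $CD^\sigma(K,N)$ inequality $\Gamma_2^\sigma(f)(x)\ge\tfrac1N(\Delta^\sigma f)^2(x)+K\Gamma^\sigma(f)(x)$ against $d_x$ and substitute, noting $\sum_{x\in V}d_x(\Delta^\sigma f)^2(x)=\|\Delta^\sigma f\|^2=(\lambda^\sigma)^2\|f\|^2$. This gives $(\lambda^\sigma)^2\|f\|^2\ge\tfrac1N(\lambda^\sigma)^2\|f\|^2+K\lambda^\sigma\|f\|^2$; cancelling $\|f\|^2>0$ and then dividing by $\lambda^\sigma>0$ leaves $\tfrac{N-1}{N}\lambda^\sigma\ge K$. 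For $N\ge 1$ this rearranges to $\lambda^\sigma\ge\tfrac{N}{N-1}K$, which implies the claimed bound $\tfrac{N-1}{N}K\le\lambda^\sigma$ when $K>0$, while for $K\le 0$ the bound is immediate from $\lambda^\sigma\ge 0$; in the case $N=\infty$ the inequality reads $\lambda^\sigma\ge K$ directly, matching the stated convention.

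I expect the only real obstacle to be the bookkeeping behind the two summation-by-parts identities, and in particular the integrated Bochner identity $\sum_{x\in V}d_x\Gamma_2^\sigma(f)(x)=(\lambda^\sigma)^2\|f\|^2$, since it hinges on combining $\sum_{x\in V}d_x\Delta(\cdot)=0$ with the self-adjointness of $\Delta^\sigma$ and on the mixed appearance of $\Delta$ and $\Delta^\sigma$ emphasized in the definitions. Once these are secured the remainder is purely algebraic; I note in passing that the computation in fact yields the sharper constant $\tfrac{N}{N-1}K$, so the stated estimate follows a fortiori.
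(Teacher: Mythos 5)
Your argument is correct where the statement is meaningful, but it is worth noting that the paper does not actually prove this lemma: it is quoted verbatim from \cite[Theorem 6.1]{Liu 19}, and the only in-paper derivation of (part of) it appears later, where the case $N>1$ is recovered from the Harnack inequality of Theorem \ref{harnack} by letting $\varepsilon\to 0$ in Theorem \ref{eigenvalue estimate}. That route is a pointwise maximum-principle argument (evaluate the gradient estimate at a maximizer of $|\nabla^\sigma f|^2+\alpha\lambda^\sigma f^2$), whereas yours is the global integrated Bochner argument: sum $CD^\sigma(K,N)$ against the degree measure and use the two summation-by-parts identities. Your identities check out against the paper's mixed definitions of $\Gamma^\sigma$ and $\Gamma_2^\sigma$ — the essential point, which you correctly isolate, is that the outer Laplacian in both definitions is the \emph{unsigned} $\Delta$, so $\sum_x d_x\Delta(\cdot)=0$ kills it, while the self-adjointness of $\Delta^\sigma$ handles the remaining terms; this yields $\sum_x d_x\Gamma^\sigma(f)(x)=\lambda^\sigma\|f\|^2$ and $\sum_x d_x\Gamma_2^\sigma(f)(x)=(\lambda^\sigma)^2\|f\|^2$ exactly as you claim. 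Your method in fact produces the sharper bound $\lambda^\sigma\geq\frac{N}{N-1}K$, which is precisely the form recorded in the paper's Corollary \ref{cor:CD0N}'s companion (the corollary following Theorem \ref{eigenvalue estimate}) and in \cite{Liu 19} itself; the integration argument is shorter and needs no maximum point, while the Harnack route has the advantage of also delivering the $1/D^2$ term that is the paper's main concern.

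One caveat: the lemma is stated for all $N\in(0,\infty]$, and your final implication ``$\frac{N-1}{N}\lambda^\sigma\geq K\Rightarrow\frac{N-1}{N}K\leq\lambda^\sigma$'' uses $\frac{N-1}{N}\geq 0$, hence only covers $N\geq 1$ (plus $N=\infty$). For $N\in(0,1)$ the coefficient $\frac{N-1}{N}$ is negative and the rearrangement reverses, so your inequality gives an \emph{upper} bound $\lambda^\sigma\leq\frac{N}{N-1}K$ rather than the stated lower bound. This is a genuine gap relative to the literal statement, though a benign one: the paper's own Remark following Theorem \ref{eigenvalue estimate} records that $CD^\sigma(K,N)$ with $N\in(0,1)$ forces $K<0$, and the coefficient $\frac{N-1}{N}$ in the lemma as printed is evidently a transcription of the $\frac{N}{N-1}$ constant from \cite{Liu 19}, which is only intended as a lower bound for $N>1$. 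You should either restrict to $N\geq 1$ explicitly or add a sentence disposing of $N\in(0,1)$.
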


To conclude this section, we recall the definition of the so-called strong nodal domains of a function on a signed graph, which has been introduced in \cite{M16, Ge 23}.
\begin{defn}[Strong nodal domains]
 Let $(G,\sigma)$ be a signed graph and $f:V\rightarrow\mathbb{R}$ be a function. A sequence $\{x_{k}\}_{k=1}^{n}$ of vertices is called a \emph{strong nodal domain walk} of $f$ if \[x_k\sim x_{k+1}\,\, \text{ and}\,\, \,f(x_{k})\sigma_{x_{k}x_{k+1}}f(x_{k+1})>0\] for each $k=1,2,...,n-1$. Let $\Omega=\{v\in V:f(v)\neq0\}$ be the set of non-zeros of $f$. We say two vertices $x,y\in \Omega$ are equivalent if and only if either  $x=y$ or there exists an strong nodal domain walk connecting $x$ and $y$. Let $\{S_{i}\}_{i=1}^{p}$ be the corresponding equivalence classes of $\Omega$. We call the induced subgraph of each $S_i$ a \emph{strong nodal domain} of the function $f$.
\end{defn}
\vskip 0.5cm
\section{Laplacian eigenvalues and diameter}
\subsection{Harnack inequality for eigenfunctions}
We prove the following Harnack inequality for eigenfunctions of $\Delta^\sigma$.
\begin{thm}\label{harnack}
  Suppose that a finite connected signed graph $(G,\sigma)$ satisfies $CD^{\sigma}(K,N)$ and $f:V\rightarrow \mathbb{R}$ is an eigenfunction of the signed Laplacian $\Delta^{\sigma}$ to a nonzero eigenvalue $\lambda^{\sigma}$. Then it holds for all $x\in V$ and $\alpha>2-2K/\lambda^{\sigma}$ that
  \begin{equation*}
    |\nabla^{\sigma}f|^{2}(x)+\alpha\lambda^{\sigma} f^{2}(x)\leq\frac{(\alpha^{2}-\frac{4}{N})\lambda^{\sigma}+2K\alpha}{(\alpha-2)\lambda^{\sigma}+2K}\lambda^{\sigma}\cdot \max_{z\in V}f^{2}(z).
  \end{equation*}
\end{thm}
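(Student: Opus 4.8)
The plan is to run a discrete maximum principle on the auxiliary function $F := |\nabla^{\sigma}f|^{2} + \alpha\lambda^{\sigma} f^{2}$. Since $G$ is finite, $F$ attains its maximum at some vertex $x_0$, and there the ordinary (unsigned) Laplacian satisfies $\Delta F(x_0)\le 0$, because $F(y)-F(x_0)\le 0$ for every neighbor $y$ of $x_0$. The whole argument then reduces to producing a pointwise lower bound for $\Delta F$ valid at every vertex and reading it off at $x_0$.

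To bound $\Delta F$ from below I would treat the two summands separately. For the gradient term, setting $f=g$ in the definition of $\Gamma_2^{\sigma}$ gives the identity $\tfrac12\Delta\Gamma^{\sigma}(f) = \Gamma_2^{\sigma}(f) + \Gamma^{\sigma}(f,\Delta^{\sigma}f)$; since $\Delta^{\sigma}f=-\lambda^{\sigma}f$ and $\Gamma^{\sigma}$ is bilinear, the last term equals $-\lambda^{\sigma}\Gamma^{\sigma}(f)$. Feeding the hypothesis $CD^{\sigma}(K,N)$ into $\Gamma_2^{\sigma}(f)$ and using $\Gamma^{\sigma}(f)=\tfrac12|\nabla^{\sigma}f|^{2}$ together with $(\Delta^{\sigma}f)^{2}=(\lambda^{\sigma})^{2}f^{2}$ yields the inequality $\Delta|\nabla^{\sigma}f|^{2} \ge \tfrac{4(\lambda^{\sigma})^{2}}{N}f^{2} + 2(K-\lambda^{\sigma})|\nabla^{\sigma}f|^{2}$. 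For the second summand I would use the carr\'e du champ identity with $f=g$, namely $\Gamma^{\sigma}(f)=\tfrac12(\Delta(f^{2})-2f\Delta^{\sigma}f)$, which combined with the eigenvalue equation gives the \emph{exact} identity $\Delta(f^{2})=|\nabla^{\sigma}f|^{2}-2\lambda^{\sigma}f^{2}$. Adding the two contributions and collecting terms produces the pointwise estimate $\Delta F \ge [2K+(\alpha-2)\lambda^{\sigma}]\,|\nabla^{\sigma}f|^{2} + (\tfrac{4}{N}-2\alpha)(\lambda^{\sigma})^{2}f^{2}$.

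Evaluating this at $x_0$ and invoking $\Delta F(x_0)\le 0$ gives $[2K+(\alpha-2)\lambda^{\sigma}]\,|\nabla^{\sigma}f|^{2}(x_0) \le (2\alpha-\tfrac{4}{N})(\lambda^{\sigma})^{2}f^{2}(x_0)$. The hypothesis $\alpha>2-2K/\lambda^{\sigma}$ is precisely the assertion that the coefficient $2K+(\alpha-2)\lambda^{\sigma}$ is strictly positive (since $\lambda^{\sigma}>0$), so I may divide and bound $|\nabla^{\sigma}f|^{2}(x_0)$ by a multiple of $f^{2}(x_0)$. Adding $\alpha\lambda^{\sigma}f^{2}(x_0)$ to both sides restores $F(x_0)$ on the left, and a short algebraic simplification collapses the resulting coefficient exactly to $\frac{(\alpha^{2}-\frac{4}{N})\lambda^{\sigma}+2K\alpha}{(\alpha-2)\lambda^{\sigma}+2K}\lambda^{\sigma}$. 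Finally, because $x_0$ maximizes $F$ and $f^{2}(x_0)\le\max_{z}f^{2}(z)$, we obtain $F(x)\le F(x_0)\le C\max_{z}f^{2}(z)$ for every $x$, which is the desired inequality.

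The computations are routine once this framework is in place; the genuinely signed feature to watch is the mixing of the two Laplacians. The identity $\Delta(f^{2})=|\nabla^{\sigma}f|^{2}-2\lambda^{\sigma}f^{2}$ uses the unsigned $\Delta$ on its left-hand side but the signed $\Delta^{\sigma}$ through the eigenvalue relation, and this is exactly where the definition of $\Gamma^{\sigma}$ (built from both $\Delta$ and $\Delta^{\sigma}$) is indispensable. The other point requiring care is the bookkeeping of signs: one must confirm that the hypothesis on $\alpha$ renders the denominator positive and that the resulting constant $C$ is nonnegative (which holds in the relevant regime, where $\alpha>0$ forces $F\ge 0$), so that replacing $f^{2}(x_0)$ by $\max_{z}f^{2}(z)$ preserves the direction of the inequality.
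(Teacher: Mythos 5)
Your proposal is correct and follows essentially the same route as the paper: the same maximum-principle argument on $F=|\nabla^{\sigma}f|^{2}+\alpha\lambda^{\sigma}f^{2}$, the same two pointwise identities from $\Gamma_2^{\sigma}$ and $\Gamma^{\sigma}$ combined with $CD^{\sigma}(K,N)$ and the eigenvalue equation, and the same algebraic collapse of the constant. The sign bookkeeping you flag (positivity of $(\alpha-2)\lambda^{\sigma}+2K$ and of the final constant) is handled identically in the paper.
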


\begin{proof}
  We first calculate the Laplacian of function $|\nabla^{\sigma}f|^{2}(x)+\alpha\lambda^{\sigma} f^{2}(x)$. Indeed, we have by the definition of $\Gamma_2^\sigma$ that
  \begin{equation*}
    \begin{split}
       (-\Delta)|\nabla^{\sigma}f|^{2}(x) 
         & = -2\Delta\Gamma^\sigma(f)(x)=-4\Gamma_2^\sigma(f)(x)-4\Gamma^\sigma(f,\Delta^\sigma f) \\
         &\leq -\frac{4}{N}(\Delta^\sigma f)^2(x)-4K\Gamma^\sigma(f)(x)-4\Gamma^\sigma(f,\Delta^\sigma f)\\
       &=(2\lambda^{\sigma}-2K)|\nabla^{\sigma}f|^{2}(x)-\frac{4}{N}(\lambda^{\sigma})^{2}f^{2}(x).
    \end{split}
  \end{equation*} 
  In the above we apply the curvature dimension inequality $CD^\sigma(K,N)$ and the equation $\Delta^\sigma f+\lambda^\sigma f=0$.
  Moreover we derive from the definition of $\Gamma^\sigma$ that
  \begin{equation*}
    \begin{split}
       (-\Delta)f^{2}(x)  =-2\Gamma^\sigma(f)(x)-2f(x)\Delta^\sigma f(x) =2\lambda^{\sigma}f^{2}(x)-|\nabla^{\sigma}f|^{2}(x).
    \end{split}
  \end{equation*}
  
  Putting the above two calculations together yields
  \begin{equation*}
    \begin{split}
       &(-\Delta)(|\nabla^{\sigma}f|^{2}+\alpha\lambda^{\sigma}f^{2})(x) \\
        \leq  & (2\lambda^{\sigma}-\alpha\lambda^{\sigma}-2K)|\nabla^{\sigma}f|^{2}(x)+\left(2\alpha-\frac{4}{N}\right)(\lambda^{\sigma})^{2}f^{2}(x).
    \end{split}
  \end{equation*}
  
Let $v$ be a vertex such that
  \begin{equation*}
    |\nabla^{\sigma}f|^{2}(v)+\alpha\lambda^{\sigma} f^{2}(v)=\max_{x\in V}\left\{|\nabla^{\sigma}f|^{2}(x)+\alpha\lambda^{\sigma} f^{2}(x)\right\}.
  \end{equation*}
 
Therefore, we derive at the vertex $v$ that
   \begin{equation*}
     \begin{split}
        0 & \leq (-\Delta)(|\nabla^{\sigma}f|^{2}+\alpha\lambda^{\sigma}f^{2})(v) \\
          & \leq (2\lambda^{\sigma}-\alpha\lambda^{\sigma}-2K)\cdot|\nabla^{\sigma}f|^{2}(v)+\left(2\alpha-\frac{4}{N}\right)(\lambda^{\sigma})^{2}f^{2}(v).
     \end{split}
   \end{equation*}
 
 For $\alpha>2-\frac{2K}{\lambda^{\sigma}}$, the above inequality implies
 \begin{equation*}
   |\nabla^{\sigma}f|^{2}(v)\leq\frac{2\alpha-\frac{4}{N}}{(\alpha-2)\lambda^{\sigma}+2K}(\lambda^{\sigma})^{2}f^{2}(v).
 \end{equation*}  
 
 Then for every $x\in V$, we derive
 \begin{equation*}
 \begin{split}
   |\nabla^{\sigma}f|^{2}(x)+\alpha\lambda^{\sigma}f^{2}(x)&\leq |\nabla^{\sigma}f|^{2}(v)+\alpha\lambda^{\sigma} f^{2}(v)\\
   &\leq\frac{(\alpha^{2}-\frac{4}{N})\lambda^{\sigma}+2K\alpha}{(\alpha-2)\lambda^{\sigma}+2K}\cdot\lambda^{\sigma}\cdot\max_{z\in V}f^{2}(z),
   \end{split}
 \end{equation*}
 which completes the proof.  
   \end{proof}

For any $\varepsilon>0$, we set $\alpha=2+\varepsilon-\frac{2K}{\lambda^{\sigma}}>0$. Notice that the positivity of $\alpha$ follows from Lemma \ref{curvature estimate}. Consequently, we obtain the following corollary.

\begin{cor}
    \label{gradient estimate}
  Suppose that a finite connected signed graph $G$ satisfies $CD^{\sigma}(K,N)$ and $f:V\rightarrow \mathbb{R}$ is an eigenfunction of the signed Laplacian $\Delta^{\sigma}$ to a nonzero eigenvalue $\lambda^{\sigma}$. Then it holds for all $x\in V$ and $\varepsilon>0$ that
  \begin{equation*}
    |\nabla^{\sigma}f|^{2}(x)\leq\left(\left((2+\varepsilon)^{2}-\frac{4}{N}\right)\frac{\lambda^{\sigma}}{\varepsilon}-\left(\frac{4}{\varepsilon}+2\right)K\right)\cdot \max_{z\in V}f^{2}(z).
  \end{equation*}
\end{cor}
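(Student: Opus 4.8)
The plan is to read off Corollary \ref{gradient estimate} from Theorem \ref{harnack} by inserting the distinguished value $\alpha = 2 + \varepsilon - 2K/\lambda^\sigma$ and then discarding a nonnegative term. First I would check that this choice is admissible in Theorem \ref{harnack}: the hypothesis $\alpha > 2 - 2K/\lambda^\sigma$ is immediate since $\varepsilon > 0$. The positivity $\alpha > 0$, which is exactly what will legitimize dropping the $\alpha\lambda^\sigma f^2$ term at the end, is the one point that genuinely uses structure; here I would invoke the Lichnerowicz-type bound $\frac{N-1}{N}K \leq \lambda^\sigma$ of Lemma \ref{curvature estimate}, together with $\lambda^\sigma > 0$, to control the ratio $K/\lambda^\sigma$.

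The decisive observation is that this particular $\alpha$ is engineered to collapse the denominator appearing in Theorem \ref{harnack}. Substituting gives
\[
(\alpha - 2)\lambda^\sigma + 2K = (\varepsilon - 2K/\lambda^\sigma)\lambda^\sigma + 2K = \varepsilon\lambda^\sigma,
\]
so the fractional prefactor reduces to $\big[(\alpha^2 - \tfrac{4}{N})\lambda^\sigma + 2K\alpha\big]/\varepsilon$. Next I would expand the numerator by writing $\alpha = (2+\varepsilon) - \beta$ with $\beta := 2K/\lambda^\sigma$ and squaring. The terms that look threatening are those producing $K^2/\lambda^\sigma$, namely the $\beta^2\lambda^\sigma$ contribution from $\alpha^2\lambda^\sigma$ and the $-2K\beta$ contribution from $2K\alpha$; the point is that these are equal and opposite and cancel exactly, leaving the clean identity
\[
\big(\alpha^2 - \tfrac{4}{N}\big)\lambda^\sigma + 2K\alpha = \big((2+\varepsilon)^2 - \tfrac{4}{N}\big)\lambda^\sigma - 2(2+\varepsilon)K.
\]
Dividing by $\varepsilon$ and using $2(2+\varepsilon)/\varepsilon = 4/\varepsilon + 2$ then reproduces verbatim the coefficient claimed in the statement.

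Finally, since $\alpha > 0$ and $\lambda^\sigma > 0$ we have $\alpha\lambda^\sigma f^2(x) \geq 0$, so discarding this nonnegative summand from the left-hand side of the inequality in Theorem \ref{harnack} yields the asserted bound on $|\nabla^\sigma f|^2(x)$ alone. I expect no real obstacle beyond bookkeeping: the substance is confined to the $K^2/\lambda^\sigma$ cancellation in the numerator and to the justification of $\alpha > 0$ via Lemma \ref{curvature estimate}, while everything else is a direct substitution and simplification.
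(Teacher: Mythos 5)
Your proposal is correct and coincides with the paper's own derivation: the paper obtains Corollary \ref{gradient estimate} precisely by substituting $\alpha=2+\varepsilon-2K/\lambda^{\sigma}$ into Theorem \ref{harnack}, noting $\alpha>0$ via Lemma \ref{curvature estimate}, and discarding the term $\alpha\lambda^{\sigma}f^{2}(x)$. Your algebraic simplification of the prefactor (the cancellation of the $4K^{2}/\lambda^{\sigma}$ terms and the identity $2(2+\varepsilon)/\varepsilon=4/\varepsilon+2$) checks out.
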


\begin{rem}
  In case that $\sigma\equiv +1$ and $\varepsilon=2$, the above results reduces to \cite[Theorem 3.3 and Corollary 3.4]{Chung 14}.
\end{rem}

\subsection{Eigenvalue estimate}

In this subsection, we show the following eigenvalue estimate, which is achived by combining the Harnack inequality with tricks about strong nodal domains of eigenfunctions.

\begin{thm}\label{eigenvalue estimate}
  Let $\varepsilon>0$ be any positive number. Let $(G,\sigma)$ be a finite connected signed graph satisfying $CD^{\sigma}(K,N)$ with \begin{equation}\label{eq:Ncondition} N>\frac{4}{(2+\varepsilon)^2}\end{equation} and $\lambda^{\sigma}$ be a nonzero eigenvalue of the corresponding Laplacian $\Delta^\sigma$. 
  Then, we have
  \begin{equation}\label{estimate I}
     \lambda^{\sigma}\geq\frac{\varepsilon}{\left((2+\varepsilon)^{2}-\frac{4}{N}\right)}\cdot\frac{1}{d(D+1)\left\lceil (D+1)/2\right\rceil}+\frac{2(2+\varepsilon)}{\left((2+\varepsilon)^{2}-\frac{4}{N}\right)}K,
  \end{equation}
  where $d$ is the maximal vertex degree, $D$ is the diameter of the graph $G$, and $\lceil\cdot\rceil$ denotes the smallest integer larger than the given number. When the multiplicity of $\lambda^\sigma$ is at least $2$ or the signed graph  $(G,\sigma)$ is balanced, the estimate can be improved as below:
  \begin{equation}\label{eq:estimatebetter}
      \lambda^{\sigma}\geq\frac{\varepsilon}{\left((2+\varepsilon)^{2}-\frac{4}{N}\right)}\cdot\frac{1}{dD\left\lceil D/2\right\rceil}+\frac{2(2+\varepsilon)}{\left((2+\varepsilon)^{2}-\frac{4}{N}\right)}K.
\end{equation}
\end{thm}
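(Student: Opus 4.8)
\emph{Proof proposal.} The plan is to reduce the theorem to a purely combinatorial \emph{lower} bound on the maximal signed gradient and then feed it into the gradient estimate of Corollary \ref{gradient estimate}. Write $A:=(2+\varepsilon)^2-\tfrac4N$, which is positive by the hypothesis (\ref{eq:Ncondition}), and let
\begin{equation*}
  \Lambda:=\Big(A\Big)\frac{\lambda^\sigma}{\varepsilon}-\Big(\frac4\varepsilon+2\Big)K
  =\frac1\varepsilon\big(A\lambda^\sigma-2(2+\varepsilon)K\big)
\end{equation*}
be the constant appearing on the right-hand side of Corollary \ref{gradient estimate}, so that $|\nabla^\sigma f|^2(x)\le \Lambda\max_{z}f^2(z)$ for every $x$. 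A direct rearrangement shows that the desired estimate (\ref{estimate I}) (resp.\ the improved (\ref{eq:estimatebetter})) is \emph{equivalent} to $\Lambda\ge \frac{1}{dL\lceil L/2\rceil}$ with $L=D+1$ (resp.\ $L=D$). Hence it suffices to prove the matching lower bound
\begin{equation*}
  \max_{x\in V}|\nabla^\sigma f|^2(x)\ \ge\ \frac{1}{dL\lceil L/2\rceil}\,\max_{z\in V}f^2(z),
\end{equation*}
since then $\Lambda\max f^2\ge\max|\nabla^\sigma f|^2\ge \frac{1}{dL\lceil L/2\rceil}\max f^2$ yields the claim after unwinding $\Lambda$.

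The engine for the lower bound is a \emph{signed telescoping} along a walk. For a walk $z_0\sim z_1\sim\cdots\sim z_\ell$ set $s_k:=\prod_{j=0}^{k-1}\sigma_{z_jz_{j+1}}$ (with $s_0=1$). Since $s_{k+1}f(z_{k+1})-s_kf(z_k)=s_k\big(\sigma_{z_kz_{k+1}}f(z_{k+1})-f(z_k)\big)$, summation and the Cauchy--Schwarz inequality give
\begin{equation*}
  \big|f(z_0)-s_\ell f(z_\ell)\big|^2\le \ell\sum_{k=0}^{\ell-1}\big(\sigma_{z_kz_{k+1}}f(z_{k+1})-f(z_k)\big)^2
  \le \ell^2\, d\,\max_{x}|\nabla^\sigma f|^2(x),
\end{equation*}
where the last step uses the edgewise consequence $\big(\sigma_{z_kz_{k+1}}f(z_{k+1})-f(z_k)\big)^2\le d_{z_k}|\nabla^\sigma f|^2(z_k)$. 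Normalizing $\max_z f^2(z)=1$ and letting $v_0$ be a vertex with $f(v_0)=1$, the whole task becomes to exhibit a short walk from $v_0$ to a vertex $u$ at which the \emph{switched value} $s_\ell f(u)$ is non-positive (in particular a zero of $f$ suffices), for then $|f(v_0)-s_\ell f(u)|\ge 1$ and the display above forces the gradient to be large.

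This is exactly where the strong nodal domain structure enters. The condition $f(x_k)\sigma_{x_kx_{k+1}}f(x_{k+1})>0$ defining a strong nodal domain walk is precisely the statement that the switched quantities $s_kf(x_k)$ keep a constant sign; thus a vertex with the opposite switched sign lies outside the strong nodal domain of $v_0$. I distinguish two cases. When $(G,\sigma)$ is balanced we switch to the all-positive signature and use that $\lambda^\sigma$-eigenfunctions are orthogonal to the constant, so $\sum_x d_xf(x)=0$ and $f$ genuinely changes sign; when the multiplicity of $\lambda^\sigma$ is at least $2$ we use the extra dimension of the eigenspace to select an eigenfunction vanishing at a prescribed vertex. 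In either improved case a true sign change / zero is available, a shortest path between a positive maximum and the opposite-sign region has length $\le D$, and optimizing the split of this geodesic (applying Cauchy--Schwarz to the shorter arc reaching the transition vertex rather than the whole path) upgrades the crude $L^2=D^2$ to $dD\lceil D/2\rceil$. In the remaining general case (possibly unbalanced, multiplicity one) no free sign change need exist inside distance $D$; here I exploit the unbalancedness to guarantee that, after at most one additional edge leaving the strong nodal domain of $v_0$, one reaches a vertex providing the required opposite switched value, which is what costs the replacement of $D$ by $D+1$.

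The main obstacle is precisely this last combinatorial reaching/splitting lemma: proving that the optimal split of the geodesic yields the exact factor $\lceil L/2\rceil$ rather than a constant-times-$L$ loss, and, in the unbalanced multiplicity-one case, controlling the sign structure so that only a single extra edge (hence $D+1$, not $2D$) is needed to cross out of the strong nodal domain of the maximizer. The curvature input (Corollary \ref{gradient estimate}, together with the positivity $\alpha=2+\varepsilon-2K/\lambda^\sigma>0$ guaranteed by Lemma \ref{curvature estimate}) is then entirely quantitative and enters only through $\Lambda$; the geometric content lives in the strong nodal domain walk analysis sketched above.
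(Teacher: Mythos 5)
Your overall architecture --- the gradient estimate from Corollary \ref{gradient estimate}, a signed telescoping along a walk, and strong nodal domain walks to locate a vertex with non-positive switched value --- is exactly the paper's, and your reduction to a lower bound on $\max_x|\nabla^{\sigma}f|^{2}(x)$ is valid. But the two steps you yourself flag as ``the main obstacle'' are precisely where the proof's content lies, and the mechanism you propose for the first one is not the right one. The factor $\lceil L/2\rceil$ does not come from ``applying Cauchy--Schwarz to the shorter arc reaching the transition vertex'': in the relevant walks the transition happens only at the \emph{final} edge (the walk is a strong nodal domain walk up to $x_{t-1}$ and only $x_{t-1}\sim x_t$ breaks it), so the full length $t$ necessarily enters the Cauchy--Schwarz step and splitting the geodesic gains nothing. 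The correct mechanism is to pair \emph{consecutive} edges at their shared vertex: since $d_{x_i}|\nabla^{\sigma}f|^{2}(x_i)=\sum_{y\sim x_i}(\sigma_{x_iy}f(y)-f(x_i))^{2}$ contains both $(f(x_{i-1})-\sigma_{x_{i-1}x_i}f(x_i))^{2}$ and $(f(x_i)-\sigma_{x_ix_{i+1}}f(x_{i+1}))^{2}$, two consecutive edge terms together cost only one factor of $d\max_x|\nabla^{\sigma}f|^{2}(x)$, so the sum of $t$ edge terms is at most $d\lceil t/2\rceil\max_x|\nabla^{\sigma}f|^{2}(x)$ rather than the $t\,d\max_x|\nabla^{\sigma}f|^{2}(x)$ in your display. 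As written, your chain only yields $\max_x|\nabla^{\sigma}f|^{2}(x)\geq 1/(dL^{2})$, i.e., the theorem with its constant weakened by roughly a factor of $2$.

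The second gap is the unbalanced, simple-eigenvalue case, where you must justify the cost of exactly one extra edge. Concretely you need: (i) some edge of $G$ fails to be a strong nodal domain walk of $f$; and (ii) if every walk of length $\leq D$ from the maximizer $x_0$ is a strong nodal domain walk, then both endpoints of any such failing edge lie at distance exactly $D$ from $x_0$, so a geodesic extended by that edge has length $D+1$. For (i) the paper argues by contradiction: if $f(x)\sigma_{xy}f(y)>0$ on every edge, then $\tau=f/|f|$ switches $\sigma$ to the all-positive sign, so $(G,\sigma)$ is balanced and $\tau$ is an eigenfunction to the eigenvalue $0$; the orthogonality $0=\sum_{x}f(x)\tau(x)d_x=\sum_{x}|f(x)|d_x$ is then absurd. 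Point (ii) is immediate because a failing edge with an endpoint at distance $<D$ would extend to a walk of length $\leq D$ contradicting the case hypothesis. Your sketch gestures at both points but proves neither, and ``exploit the unbalancedness'' without this orthogonality argument does not by itself produce the failing edge. The balanced and multiplicity-$\geq 2$ reductions you describe do match the paper's.
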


\begin{rem}
    Note that for $N\in [1,+\infty]$, the condition (\ref{eq:Ncondition}) holds for any positive number $\varepsilon>0$. It is proved in \cite[Corollary 6.2]{Liu 19} that if a signed graph $(G,\sigma)$ satisfies $CD^\sigma(K,N)$ with $N\in (0,1)$, then we have $K<0$.
\end{rem}

\begin{proof}
  Let $f$ be an eigenfunction to the eigenvalue $\lambda^\sigma$, i.e., $\Delta^\sigma f+\lambda^\sigma f=0$. By rescaling, we can assume that
    \begin{equation*}
    \max_{z\in V}f(z)=1\quad \mathrm{and} \quad\min_{z\in V}f(z)\geq-1.
  \end{equation*} 
 Let $x_{0}\in V$ be a vertex such that $f(x_{0})=\max_{z\in V}f(z)=1$. Let $x_{0}\sim x_{1}\sim ...\sim x_{t}$ be a path starting from the vertex $x_{0}$. By Corollary \ref{gradient estimate}, we obtain
  \begin{equation*}
  \begin{split}
    [f(x_{i-1})-\sigma_{x_{i-1}x_{i}} f(x_{i})]^{2}&+[f(x_{i})-\sigma_{x_{i}x_{i+1}} f(x_{i+1})]^{2}\leq d\cdot|\nabla^{\sigma}f|^{2}(x_{i})\\
    &\leq d\cdot\left(\left((2+\varepsilon)^{2}-\frac{4}{N}\right)\frac{\lambda^{\sigma}}{\varepsilon}-\left(\frac{4}{\varepsilon}+2\right)K\right).
   \end{split}
  \end{equation*}
Therefore, we derive via summing up the above inequalities that
  \begin{equation}\label{yi}
    \sum_{i=0}^{t-1}[f(x_{i})-\sigma_{x_{i}x_{i+1}} f(x_{i+1})]^{2}\leq d\left\lceil\frac{t}{2}\right\rceil\left(\left((2+\varepsilon)^{2}-\frac{4}{N}\right)\frac{\lambda^{\sigma}}{\varepsilon}-\left(\frac{4}{\varepsilon}+2\right)K\right).
  \end{equation}
 Next, we bound the left hand side of the above inequality from below:
 \begin{align}\label{er}
       \sum_{i=0}^{t-1}[f(x_{i})-\sigma_{x_{i}x_{i+1}} f(x_{i+1})]^{2} & =[f(x_{0})-\sigma_{x_{0}x_{1}} f(x_{1})]^{2}+\sigma_{x_{0}x_{1}}^{2}[f(x_{1})-\sigma_{x_{1}x_{2}} f(x_{2})]^{2}\notag\\
       &\quad+\cdots+(\sigma_{x_{0}x_{1}}\sigma_{x_{1}x_{2}}\cdots\sigma_{x_{i-1}x_{i}})^{2}[f(x_{i})-\sigma_{x_{i}x_{i+1}} f(x_{i+1})]^{2}\notag\\
       &\quad+\cdots+(\sigma_{x_{0}x_{1}}\sigma_{x_{1}x_{2}}\cdots\sigma_{x_{t-2}x_{t-1}})^{2}[f(x_{t-1})-\sigma_{x_{t-1}x_{t}} f(x_{t})]^{2}\notag\\
       &=\sum_{i=0}^{t-1}\left[\sigma(P_{x_0x_i})f(x_i)-\sigma(P_{x_0x_{i+1}})f(x_{i+1})\right]^2\notag\\
         & \geq\frac{1}{t}[f(x_{0})-\sigma(P_{x_{0}x_{t}}) f(x_{t})]^{2},
  \end{align}  
  where we use the notations $\sigma(P_{x_{0}x_{i}}):=\sigma_{x_{0}x_{1}}\sigma_{x_{1}x_{2}}\cdots\sigma_{x_{i-1}x_{i}}$ for each $i=1,\ldots, t$ and $\sigma(P_{x_0x_0}):=1$. The last inequality above is due to the Cauchy-Schwarz inequality.

  We have the following key observation: If the path $x_{0}\sim x_{1}\sim \cdots\sim x_{t}$ is a strong nodal domain walk of the function $f$, that is, $f(x_i)\sigma_{x_ix_{i+1}}f(x_{i+1})>0$ holds for each $i=0,1,\ldots,t-1$, then  we have 
  \begin{equation}\label{eq:strongnodal}
      \sigma(P_{x_0x_t})f(x_t)>0.
  \end{equation}
  We show this property inductively. For $t=0$, we check $\sigma(P_{x_0x_0})f(x_0)=f(x_0)=1>0$. Suppose that $\sigma(P_{x_0x_i})f(x_i)>0$. Since $f(x_i)\sigma_{x_ix_{i+1}}f(x_{i+1})>0$, the values $f(x_i)$ and $\sigma_{x_ix_{i+1}}f(x_{i+1})$ share the same sign. Therefore, we derive
  \[\sigma(P_{x_0x_{i+1}})f(x_{i+1})=\sigma(P_{x_0x_i})\sigma_{x_ix_{i+1}}f(x_{i+1})>0.\] 
  This proves (\ref{eq:strongnodal}).

  From the above argument, we further observe the following property. If the path $x_{0}\sim x_{1}\sim \cdots\sim x_{t-1}$ is a strong nodal domain walk of the function $f$, but $x_{t-1}\sim x_t$ is not a strong nodal domain walk, that is, $f(x_{t-1})\sigma_{x_{t-1}x_t}f(x_t)\leq 0$, then we have
  \begin{equation}\label{eq:nonstrongnodal}
      \sigma(P_{x_0x_t})f(x_t)\leq 0.
  \end{equation}

  We divide the proof of (\ref{estimate I}) into two cases.
\begin{itemize}
    \item [\textbf{Case 1.}] There exists a path $x_0\sim x_1\sim\cdots\sim x_t$ such that $x_0\sim x_1\sim\cdots\sim x_{t-1}$ is a strong nodal domain walk of the function $f$ but $x_{t-1}\sim x_t$ is not, and $t\leq D$.
\end{itemize}
Combining the inequalities (\ref{yi}), (\ref{er}) and (\ref{eq:nonstrongnodal}) and the fact that $f(x_0)=1$, we arrive at
\begin{equation}\label{path inequality}
\begin{split}
 & d\left\lceil\frac{t}{2}\right\rceil\left(\left((2+\varepsilon)^{2}-\frac{4}{N}\right)\frac{\lambda^{\sigma}}{\varepsilon}-\left(\frac{4}{\varepsilon}+2\right)K\right)\\
 \geq &\frac{1}{t}[f(x_{0})-\sigma(P_{x_{0}x_{t}}) f(x_{t})]^{2}\geq\frac{1}{t}.
\end{split}
\end{equation}
Noticing that $t\leq D$ and the condition (\ref{eq:Ncondition}), we obtain (\ref{eq:estimatebetter}), which 
implies the estimate (\ref{estimate I}).
\begin{itemize}
    \item [\textbf{Case 2.}] Any path $x_0\sim x_1\sim\cdots\sim x_t$ starting from $x_0$ with $t\leq D$ is a strong nodal domain walk of the function $f$.
\end{itemize}
Notice that we have $f(x)\neq 0$ for any $x\in V$ in this case, since otherwise, a shortest path connecting $x_0$ and a zero of $f$ is not a strong nodal domain walk. 

Furthermore, we claim that there exists an edge $\{x,y\}\in E$ which is not a strong nodal domain walk of $f$. In fact, if, otherwise, every edge is a strong nodal domain walk of $f$, i.e., $f(x)\sigma_{xy}f(y)>0$ for any $\{x,y\}\in E$, then the signed graph $(G,\sigma)$ is balanced. Actually, switching $\sigma$ by $\tau$ such that 
\[\tau(x)=\frac{f(x)}{|f(x)|},\,\,\text{for any }\,\,x\in V,\]
yields $\sigma^\tau\equiv +1$. We check directly that $\tau$ is an eigenfunction to the zero eigenvalue of $\Delta^\sigma$. Recalling that $f$ is an eigenfunction to a nonzero eigenvalue $\lambda^\sigma$, we derive 
\[0=\langle f,\tau\rangle=\sum_{x\in V}f(x)\tau(x)d_x=\sum_{x\in V}|f(x)|d_x,\]
which is a contradiction.

Let $\{x,y\}\in E$ be such an edge which is not a strong nodal domain walk. Then we must have $d(x_0,x)=d(x_0,y)=D$, since otherwise a shortest path connection $x_0$ and one of $x$ and $y$ continued by the edge $\{x,y\}$ violates the assumption of \textbf{Case 2}. Let $x_0\sim x_1\sim \cdots\sim x_{t-1}=x$ be a shortest path connecting $x_0$ and $x$. Then, we have $t-1=D$ and the path is a strong nodal domain walk by the assumption of \textbf{Case 2}. We further set $x_t=y$. Applying the inequalities (\ref{yi}), (\ref{er}) and (\ref{eq:nonstrongnodal}) and the fact that $f(x_0)=1$ yields the inequality (\ref{path inequality}). Inserting $t=D+1$ leads to the inequality (\ref{estimate I}). 

When the graph $(G,\sigma)$ is balanced, we see from the argument for \textbf{Case 2} that, only \textbf{Case 1} can happen for the eigenfunctions of the first nonzero eigenvalue $\lambda^\sigma$. Therefore, we have the better estimate (\ref{eq:estimatebetter}). 

When the multiplicity of $\lambda^\sigma$ is at least $2$, there always exists an eigenfunction $f$ to $\lambda$ which has zeros. That is, for such an eigenfunction, \textbf{Case 2} above can not happen. Therefore, the better estimate (\ref{eq:estimatebetter}) holds. Thus, we complete the proof.
\end{proof}
\begin{rem}
    Notice from the above proof that, except the extremal \textbf{Case 2}, we in fact have a slightly better estimate (\ref{eq:estimatebetter}). Ignoring the curvature restrictions, there does exist unbalanced signed graphs for which \textbf{Case 2} happens for any eigenfunction of the smallest eigenvalue $\lambda^\sigma$, as demonstrated in Example \ref{ex:extremalCase} below. However, the curvature of the signed graph in Example \ref{ex:extremalCase} is negative.  
\end{rem}
It is natural to ask the following interesting question: Does there exist an unbalanced signed graph $(G,\sigma)$ whose smallest eigenvalue $\lambda^\sigma$ is simple and the corresponding eigenfunction satisfies the assumption of \textbf{Case 2}?
\begin{example}\label{ex:extremalCase} 
     Consider the signed graph $(G,\sigma)$ depicted in Figure \ref{fig:2}.
\begin{figure}[!htp]
	\centering
	\tikzset{vertex/.style={circle, draw, fill=black!20, inner sep=0pt, minimum width=3pt}}
	\begin{tikzpicture}[scale=1.0]
 	 \draw (-3,0) -- (-1,0) node[midway, below, black]{$+$}
		-- (1,0) node[midway, below, black]{$+$}
        --  (3,0) node[midway, below, black]{$+$}        
        -- (0,5.196) node[midway, right, black]{$-$} 
	    -- (-1,3.464) node[midway, left, black]{$+$} 
        -- (-2,1.732) node[midway, left, black]{$+$} 		
        -- (-3,0) node[midway, left, black]{$+$}; 
      \draw (-1,0) -- (-2,1.732) node[midway, right, black]{$+$};

		\node at (-3,0) [vertex, label={[label distance=0mm]270: \small $1$}, fill=black] {};
        \node at (-1,0) [vertex, label={[label distance=0mm]270: \small $2$} ,fill=black] {};
		\node at (1,0) [vertex, label={[label distance=0mm]270: \small $3$} ,fill=black] {};
		\node at (3,0) [vertex, label={[label distance=0mm]315: \small $4$} ,fill=black] {};
        \node at (0,5.196) [vertex, label={[label distance=1mm]90: \small $5$} ,fill=black] {};
        \node at (-1,3.464) [vertex, label={[label distance=1mm]135: \small $6$} ,fill=black] {};
        \node at (-2,1.732) [vertex, label={[label distance=1mm]135: \small $7$} ,fill=black] {};
		
	\end{tikzpicture}
	\caption{The signed circle graph.}\label{fig:2}	
	\label{fig:es}
\end{figure}
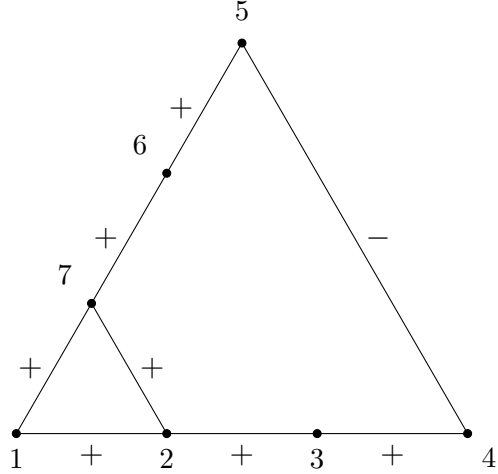
The matrix of the Lapalcian $-\Delta^{\sigma}$ is given below.
\begin{equation}\label{matrix}
\begin{pmatrix}  
   1&-1/2&0&0&0&0&-1/2\\
   -1/3&1&-1/3&0&0&0&-1/3\\
   0&-1/2&1&-1/2&0&0&0\\
   0&0&-1/2&1&1/2&0&0\\
   0&0&0&1/2&1&-1/2&0\\
   0&0&0&0&-1/2&1&-1/2\\
   -1/3&-1/3&0&0&0&-1/3&1\\
\end{pmatrix}
\end{equation}
One can check that the smallest eigenvalue of the signed graph $\lambda_{1}\approx 0.08$ is simple, and a corresponding (numerical) eigenfunction $f$ is 
\[\{1.087, 1, 0.672, 0.237, 0.237, 0.672, 1\}.\]
Clearly $f$ achieves its maximum at the vertex $x_0=1$ and any path starting from $x_0=1$ with length no larger than the diameter $3$ is a strong nodal domain walk. In fact, $d(1,4)=d(1,5)=D=3$ and $4\sim 5$ is not a strong nodal domain walk.

Notice that the signed graph $(G,\sigma)$ in Figure \ref{fig:2} satisfies $CD^\sigma(K,\infty)$ only for negative $K$. Since its only $3$-cycle is positive and it has no $4$-cycle, we derive from the switching invariance of the curvature dimension inequalities that $(G,\sigma)$ satisfies $CD^\sigma(K,N)$ if and only if the underlying graph $G$ satisfies $CD(K,N)$ \cite[Proposition 3.6]{Liu 19}. We check via the graph curvature calculator (https://www.mas.ncl.ac.uk/graph-curvature/, see \cite{CKL+22}) that $G$ satisfies $CD(-0.194, \infty)$.
\end{example}

Next, we present two consequences of Theorem \ref{eigenvalue estimate}. Indeed, Theorem \ref{eigenvalue estimate} can be viewed as a mixture of Lichnerowicz type and Li-Yau type eigenvalue estimates.
First, letting $\epsilon\to 0$ in Theorem \ref{eigenvalue estimate} yields the case $N>1$ of \cite[Theorem 6.1]{Liu 19} (see also Lemma \ref{curvature estimate}).
\begin{cor}{\cite{Liu 19}}
  Assume $(G,\sigma)$ satisfies $CD^{\sigma}(K,N)$ for $K>0$ and $N\in (1,\infty]$. Then we have for any non-zero eigenvalue $\lambda^{\sigma}$ of $\Delta^{\sigma}$ that
  \begin{equation*}
    \lambda^{\sigma}\geq\frac{N}{N-1}K,
  \end{equation*}
  where we use the convention $N/(N-1)=1$ in the case $N=\infty$.
\end{cor}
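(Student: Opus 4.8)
The plan is to derive this corollary directly from Theorem \ref{eigenvalue estimate} by a limiting argument. The estimate (\ref{estimate I}) is a mixture of a Li-Yau type term (the first summand, carrying the $1/(dD^2)$-type dependence) and a Lichnerowicz type term (the second summand, proportional to $K$). Sending $\varepsilon\to 0^{+}$ should kill the Li-Yau contribution and leave exactly the Lichnerowicz bound $\frac{N}{N-1}K$.

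First I would verify that Theorem \ref{eigenvalue estimate} applies for every $\varepsilon>0$. The hypothesis there is the dimension condition (\ref{eq:Ncondition}), namely $N>\frac{4}{(2+\varepsilon)^{2}}$. Since $\varepsilon>0$ gives $(2+\varepsilon)^{2}>4$, we have $\frac{4}{(2+\varepsilon)^{2}}<1$, so the standing assumption $N>1$ guarantees $N>1>\frac{4}{(2+\varepsilon)^{2}}$ for all $\varepsilon>0$. Hence the inequality (\ref{estimate I}) holds for our nonzero eigenvalue $\lambda^{\sigma}$ and every $\varepsilon>0$.

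Next I would pass to the limit $\varepsilon\to 0^{+}$ in (\ref{estimate I}). The denominator $(2+\varepsilon)^{2}-\frac{4}{N}$ common to both terms converges to $4-\frac{4}{N}$, which is strictly positive precisely because $N>1$ (and equals $4$ when $N=\infty$). Consequently the first (Li-Yau) term, whose numerator carries the factor $\varepsilon$ while its remaining factors stay bounded, tends to $0$; and the second (Lichnerowicz) term tends to
\[
\frac{2\cdot 2}{4-\frac{4}{N}}K=\frac{4}{4\left(1-\frac{1}{N}\right)}K=\frac{N}{N-1}K.
\]
Since $\lambda^{\sigma}$ dominates the right-hand side of (\ref{estimate I}) for every $\varepsilon>0$, it dominates the limit as well, giving $\lambda^{\sigma}\geq \frac{N}{N-1}K$, with the convention $N/(N-1)=1$ when $N=\infty$.

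The argument is essentially a clean limit, so I do not expect a serious obstacle. The only point requiring care is the strict positivity of the limiting denominator $4-\frac{4}{N}$: this is exactly what the hypothesis $N>1$ provides, and it is what simultaneously ensures that the Li-Yau term genuinely vanishes and that the limit of the Lichnerowicz term is finite and equal to $\frac{N}{N-1}K$.
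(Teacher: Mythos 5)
Your proposal is correct and is exactly the paper's argument: the paper proves this corollary by letting $\varepsilon\to 0$ in Theorem \ref{eigenvalue estimate}, which kills the diameter term and leaves $\frac{2(2+\varepsilon)}{(2+\varepsilon)^{2}-4/N}K\to\frac{N}{N-1}K$. Your added check that $N>1$ guarantees the hypothesis $N>\frac{4}{(2+\varepsilon)^{2}}$ for all $\varepsilon>0$ and keeps the limiting denominator positive is a correct and worthwhile elaboration of the same route.
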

Secondly, optimizing the coefficient in the case of $K=0$ leads to the following result.
\begin{cor}\label{cor:CD0N}
Assume $(G,\sigma)$ satisfies $CD^{\sigma}(0,N)$ for $N\in (1,\infty]$. Then we have for any non-zero eigenvalue $\lambda^{\sigma}$ of $\Delta^{\sigma}$ that
\[\lambda^\sigma\geq \frac{1}{4\left(1+\sqrt{(N-1)/N}\right)}\cdot\frac{1}{d(D+1)\left\lceil (D+1)/2\right\rceil}.\]
\end{cor}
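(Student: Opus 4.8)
The plan is to specialize the estimate (\ref{estimate I}) of Theorem \ref{eigenvalue estimate} to the case $K=0$ and then optimize the resulting prefactor over the free parameter $\varepsilon>0$. Setting $K=0$ in (\ref{estimate I}) gives, for every admissible $\varepsilon>0$,
\[\lambda^{\sigma}\geq g(\varepsilon)\cdot\frac{1}{d(D+1)\left\lceil (D+1)/2\right\rceil},\qquad g(\varepsilon):=\frac{\varepsilon}{(2+\varepsilon)^{2}-\tfrac{4}{N}}.\]
Since the right-hand side is a valid lower bound for each such $\varepsilon$, it suffices to maximize $g(\varepsilon)$ over $\varepsilon>0$ and to show that the maximum equals $\tfrac{1}{4(1+\sqrt{(N-1)/N})}$. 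I first note that for $N\in(1,\infty]$ the constraint (\ref{eq:Ncondition}) is automatically satisfied for every $\varepsilon>0$: indeed $(2+\varepsilon)^2>4$, so $4/(2+\varepsilon)^2<1<N$. Hence $g$ is well defined and positive on all of $(0,\infty)$.

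To carry out the optimization I would substitute $u=2+\varepsilon$ (so $u>2$) and write $g=\tfrac{u-2}{u^{2}-4/N}$. Differentiating, the numerator of $g'(u)$ equals $-u^{2}+4u-\tfrac{4}{N}$, so the critical points solve $u^{2}-4u+\tfrac{4}{N}=0$, i.e. $u=2\pm\sqrt{4-4/N}=2\pm 2\sqrt{(N-1)/N}$. Only the root $u^{*}=2+2\sqrt{(N-1)/N}$ lies in $(2,\infty)$, and since $g'>0$ to its left and $g'<0$ to its right, it is the global maximizer; equivalently $\varepsilon^{*}=2\sqrt{(N-1)/N}$, which is strictly positive precisely because $N>1$.

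Finally I would evaluate $g$ at $u^{*}$. The cleanest route is to use the defining relation $(u^{*})^{2}=4u^{*}-\tfrac{4}{N}$ at the critical point, which gives $(u^{*})^{2}-\tfrac{4}{N}=4u^{*}-\tfrac{8}{N}=2\left(2u^{*}-\tfrac{4}{N}\right)$, hence
\[g(\varepsilon^{*})=\frac{u^{*}-2}{2\left(2u^{*}-\tfrac{4}{N}\right)}.\]
Writing $s:=\sqrt{(N-1)/N}$, so that $u^{*}=2+2s$ and $\tfrac{4}{N}=4(1-s^{2})$, the denominator simplifies to $2u^{*}-\tfrac{4}{N}=4+4s-4(1-s^{2})=4s(1+s)$, while $u^{*}-2=2s$. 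This yields $g(\varepsilon^{*})=\tfrac{2s}{2\cdot 4s(1+s)}=\tfrac{1}{4(1+s)}$, which is exactly the claimed constant (and consistency checks out in the limit $N=\infty$, where $\varepsilon^{*}=2$ and $g(\varepsilon^{*})=1/8$).

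There is no genuine conceptual obstacle here, as the statement is a direct consequence of Theorem \ref{eigenvalue estimate}; the only point requiring care is the algebraic simplification of $g(\varepsilon^{*})$, where exploiting the critical-point identity $(u^{*})^{2}=4u^{*}-4/N$ rather than expanding $(u^{*})^{2}$ directly keeps the computation short and avoids error.
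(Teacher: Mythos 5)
Your proposal is correct and follows exactly the paper's own route: the paper derives the corollary from Theorem \ref{eigenvalue estimate} with $K=0$ by noting that the coefficient $\varepsilon/((2+\varepsilon)^2-4/N)$ is maximized at $\varepsilon=2\sqrt{(N-1)/N}$, which is precisely the optimization you carry out (with more detail, and your algebra checks out).
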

\begin{proof}
This is derived from Theorem \ref{eigenvalue estimate} by observing that the coefficient \[\frac{\varepsilon}{(2+\varepsilon)^2-\frac{4}{N}},\,\,\varepsilon>0\]
attains its maximum at $\varepsilon=2\sqrt{(N-1)/N}$.
\end{proof}

\subsection{Viewed as Diameter bounds}\label{section:diameter}
In this subsection, we address the viewpoint of treating the eigenvalue estimate of Theorem \ref{eigenvalue estimate} as a diameter bound, which can be optimized via considering all possible signs on the underlying graph.
For convenience, we set $\varepsilon=2$ and $N=\infty$ in Theorem \ref{eigenvalue estimate}, which leads to the following consequence.

\begin{cor}\label{diameter bounds}
   Let $G$ be a finite connected graph. For any sign $\sigma$ on $G$ such that $CD^{\sigma}(K,\infty)$ holds, then the diameter $D$ of $G$ satisfies 
  \begin{equation}\label{eq:diameter1}
     (D+1) \left\lceil\frac{D+1}{2}\right\rceil\geq\frac{1}{4d(2\lambda^{\sigma}-K)},
  \end{equation}
  where $\lambda^\sigma$ is the first nonzero eigenvalue of $\Delta^\sigma$. Whenever $\lambda^\sigma$ is not simple or $(G,\sigma)$ is balanced, we have a better estimate 
\begin{equation}\label{eq:diameter2}
 D \left\lceil\frac{D}{2}\right\rceil\geq\frac{1}{4d(2\lambda^{\sigma}-K)}.
\end{equation}
\end{cor}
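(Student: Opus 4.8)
The plan is to read off Corollary~\ref{diameter bounds} as a direct specialization of Theorem~\ref{eigenvalue estimate}, taking the free parameters to be $\varepsilon=2$ and $N=\infty$. With these choices the dimension restriction (\ref{eq:Ncondition}) is automatically satisfied, and the two coefficients in (\ref{estimate I}) collapse to clean constants. First I would record that $(2+\varepsilon)^2-\tfrac{4}{N}=16$ in this regime, so that the leading coefficient $\varepsilon/((2+\varepsilon)^2-\tfrac{4}{N})$ equals $\tfrac18$ and the curvature coefficient $2(2+\varepsilon)/((2+\varepsilon)^2-\tfrac{4}{N})$ equals $\tfrac12$.

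Applying Theorem~\ref{eigenvalue estimate} to the first nonzero eigenvalue $\lambda^\sigma$ then gives
\[
\lambda^\sigma \geq \frac{1}{8}\cdot\frac{1}{d(D+1)\lceil (D+1)/2\rceil}+\frac{1}{2}K.
\]
Next I would move the curvature term to the left and clear denominators, rewriting the inequality as $2\lambda^\sigma-K\geq \tfrac{1}{4d(D+1)\lceil(D+1)/2\rceil}$. Dividing by the positive quantity $2\lambda^\sigma-K$ isolates the diameter product on one side and produces exactly the bound (\ref{eq:diameter1}).

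The only step needing genuine care—and what I regard as the sole potential pitfall rather than a real obstacle—is justifying that $2\lambda^\sigma-K>0$, since the direction of the final inequality depends on dividing by a strictly positive number. This positivity is precisely the condition $\alpha=4-2K/\lambda^\sigma>0$ already used to invoke the gradient estimate (Corollary~\ref{gradient estimate}) in the proof of Theorem~\ref{eigenvalue estimate} at $\varepsilon=2$; it follows from the Lichnerowicz-type bound $\lambda^\sigma\geq K$ of Lemma~\ref{curvature estimate} in the case $N=\infty$. Concretely, if $K>0$ then $2\lambda^\sigma-K\geq 2K-K=K>0$, while if $K\leq 0$ then $2\lambda^\sigma-K=2\lambda^\sigma+|K|>0$ because $\lambda^\sigma>0$.

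Finally, the sharper bound (\ref{eq:diameter2}) is obtained by running the identical substitution and rearrangement starting from the improved estimate (\ref{eq:estimatebetter}) of Theorem~\ref{eigenvalue estimate}, which is available under exactly the two hypotheses listed here: that $\lambda^\sigma$ is non-simple (multiplicity at least two) or that $(G,\sigma)$ is balanced. Since the whole argument is a mechanical plug-in followed by algebraic manipulation, I expect no difficulty beyond bookkeeping the constants and confirming the positivity above.
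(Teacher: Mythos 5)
Your proof is correct and takes essentially the same route as the paper, which obtains Corollary~\ref{diameter bounds} precisely by substituting $\varepsilon=2$ and $N=\infty$ into Theorem~\ref{eigenvalue estimate} and rearranging. Your explicit check that $2\lambda^{\sigma}-K>0$ (via Lemma~\ref{curvature estimate}) is a sound detail that the paper leaves implicit.
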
 

We emphasize that, while the eigenvalue $\lambda^{\sigma}$ and curvature $K$ depend on the choice of the signs on the underlying graph, the diameter does not. Consequently, we can optimize the lower bounds on the diameter of a finite connected graph $G$ in Corollary \ref{diameter bounds} over all possible choices of signs on the graph $G$.

Next, we give examples to illustrate that the diameter bound can be improved by considering unbalanced signs. We first consider the $3$-cycle graph with the sign chosen as in Figure \ref{fig:1}.
\begin{figure}[!htp]
	\centering
	\tikzset{vertex/.style={circle, draw, fill=black!20, inner sep=0pt, minimum width=3pt}}
	\begin{tikzpicture}[scale=1.0]
 	 \draw (-2,0) -- (2,0) node[midway, below, black]{$-$}
		-- (0,3.464) node[midway, right, black]{$+$}
        --  (-2,0) node[midway, left, black]{$+$}       ;

		\node at (-2,0) [vertex, label={[label distance=0mm]270: \small $1$}, fill=black] {};
           \node at (2,0) [vertex, label={[label distance=0mm]270: \small $3$} ,fill=black] {};
		\node at (0,3.464) [vertex, label={[label distance=0mm]90: \small $2$} ,fill=black] {};
		
	\end{tikzpicture}
	\caption{The complete signed graph $K_{3}^{\sigma}$.}\label{fig:1}	
	\label{fig:cg}
\end{figure}
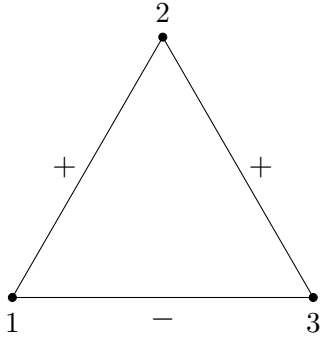
It is known, see, e.g. \cite[Section 3.5]{Liu 19}, the $3$-cycle graph satisfies $CD(\frac{5}{4},\infty)$ and  its first nonzero eigenvalue $\lambda=\frac{3}{2}$; Moreover, the signed $3$-cycle graph in Figure \ref{fig:1} satisfies $CD^\sigma(\frac{1}{4},\infty)$,  and its first eigenvalue $\lambda^{\sigma}=\frac{1}{2}$ has multiplicity $2$. 

If we choose the all-positive sign, we derive from the estimate (\ref{eq:diameter2}) for the $3$ cycle graph that $D\left\lceil D/2\right\rceil\geq 1/14$. If we choose instead, the sign given in Figure \ref{fig:1}, we obtain a better estimate from the estimate (\ref{eq:diameter2}) that $D\left\lceil D/2\right\rceil\geq 1/6$. 

In general, we consider any odd cycle graph $C_n$, i.e., a cycle graph with $n$ vertices where $n$ is odd. It is known \cite[Corollary 6.7]{Liu 19} that $(C_n,\sigma)$ with any sign $\sigma$ satisfies $CD^{\sigma}(0,N)$ for any $N\in [2,\infty]$. Moreover, the first nonzero eigenvalue $\lambda=1-\cos\frac{2\pi}{n}$ of $C_n$ has multiplicity $2$ and the first eigenvalue $\lambda^\sigma=1-\cos\frac{\pi}{n}$ of an unbalanced odd cycle $(C_n,\sigma)$  has also multiplicity $2$. Therefore, for an odd cycle $C_n$, we derive from (\ref{eq:diameter2}) using the balanced sign that 
\[ D \left\lceil\frac{D}{2}\right\rceil\geq\frac{1}{16(1-\cos\frac{2\pi}{n})}.\]
This can be improved by considering the unbalanced sign instead as
\[ D \left\lceil\frac{D}{2}\right\rceil\geq\frac{1}{16(1-\cos\frac{\pi}{n})}.\]

\begin{rem}
    The eigenvalue estimate in Theorem \ref{eigenvalue estimate} and hence the estimate $(\ref{eq:diameter2})$ for the case of balanced signed graphs are due to Chung-Lin-Yau \cite{Chung 14}. Their result is greatly improved very recently by Meng and Lin \cite{MengLin24}. Indeed, the result of Meng and Lin tells for a finite connected graph with all-positive sign satisfying $CD(K,\infty)$ that 
    \begin{equation}\label{eq:diameterML}
 D \left\lceil\frac{D}{2}\right\rceil\geq\frac{2(3-\sqrt{2})}{d(\lambda-(\sqrt{2}-1)K)}.
\end{equation}
It seems to us that the proof method of Meng and Lin does not work straightforwardly for unbalanced signed graphs. Their proof uses the fact $\Delta c=0$ for constant functions, which is no longer true for the Laplacian $\Delta^\sigma$ of unbalanced graphs.
\end{rem}

\subsection{Volume estimate for nonnegatively curved signed graphs}
In this subsection, we show a combination of Theorem \ref{eigenvalue estimate} with the Buser type inequalities established in \cite{Liu 19} yields an interesting volume estimate for unbalanced signed graphs with nonnegative curvature.

Recall that the frustration index $\iota^\sigma(G)$ of a signed graph $(G,\sigma)$ is defined as 
 \begin{equation}\label{eq:frustration}
     \iota^\sigma(G)=\min_{\tau: V\to \{ \pm 1\}}\sum_{\{x,y\}\in E}|\tau(x)-\sigma_{xy}\tau(y)|.
 \end{equation}
 It is direct to check that $\iota^\sigma(G)=0$ if and only if $(G,\sigma)$ is balanced. 
\begin{thm}\label{thm:volume}
    Let $(G,\sigma)$ be a finite connected unbalanced signed graph satisfying $CD^\sigma(0,N)$ for $N\in (1,\infty]$. Then we have 
    \begin{equation}\label{eq:volume}
        \mathrm{vol}(G)\leq 8\sqrt{(1+\sqrt{(N-1)/N})\ln 2}\cdot d\iota^\sigma(G)\sqrt{(D+1)\left\lceil(D+1)/2\right\rceil}.
    \end{equation}
\end{thm}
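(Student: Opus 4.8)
The plan is to sandwich the smallest Laplacian eigenvalue $\lambda^\sigma=\lambda_1^\sigma$ between a lower bound coming from the Li--Yau type estimate already established and an upper bound coming from a Buser type inequality, and then to eliminate $\lambda^\sigma$. Since $(G,\sigma)$ is unbalanced, $\lambda_1^\sigma>0$, so it is a genuine nonzero eigenvalue to which both bounds apply; this is exactly where the unbalancedness hypothesis is used. First I would record the lower bound: applying Corollary \ref{cor:CD0N} (the $K=0$, $\varepsilon$-optimized case of Theorem \ref{eigenvalue estimate}) to $\lambda^\sigma$ gives
\[\lambda^\sigma\geq\frac{1}{4\left(1+\sqrt{(N-1)/N}\right)}\cdot\frac{1}{d(D+1)\lceil (D+1)/2\rceil},\]
equivalently $1/\lambda^\sigma\leq 4\left(1+\sqrt{(N-1)/N}\right)d(D+1)\lceil (D+1)/2\rceil$.

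Next comes the upper bound, which is where the frustration index enters. I would invoke the Buser type inequality from \cite{Liu 19}. Since $CD^\sigma(0,N)$ implies $CD^\sigma(0,\infty)$ (the term $\tfrac1N(\Delta^\sigma f)^2$ being nonnegative), the semigroup $e^{t\Delta^\sigma}$ obeys the Bakry--\'Emery gradient estimate $\Gamma^\sigma(e^{t\Delta^\sigma}f)\leq e^{t\Delta^\sigma}\Gamma^\sigma(f)$, and the standard Buser argument bounds $\lambda_1^\sigma$ above by a constant multiple of the square of a Cheeger/isoperimetric quantity. The key point is that, evaluated on the optimal switching function $\tau$ from \eqref{eq:frustration} as test data, this quantity is governed by the frustration density: for a $\pm1$-valued $\tau$ one has $\langle -\Delta^\sigma\tau,\tau\rangle=\sum_{\{x,y\}\in E}(\tau(x)-\sigma_{xy}\tau(y))^2=2\iota^\sigma(G)$ while $\langle\tau,\tau\rangle=\mathrm{vol}(G)$, so that the frustration index plays the role of the numerator and $\mathrm{vol}(G)$ that of the denominator. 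Tracking the constants (the $\ln 2$ arising from evaluating the heat flow at the time when the relevant quantity drops to half its initial value) should yield
\[\lambda^\sigma\leq 16\ln 2\cdot\frac{d\left(\iota^\sigma(G)\right)^2}{\mathrm{vol}(G)^2},\qquad\text{i.e.}\qquad \mathrm{vol}(G)^2\leq\frac{16\ln 2\cdot d\left(\iota^\sigma(G)\right)^2}{\lambda^\sigma}.\]

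Finally I would combine the two displays. Substituting the reciprocal bound for $1/\lambda^\sigma$ into the last inequality gives
\[\mathrm{vol}(G)^2\leq 64\ln 2\left(1+\sqrt{(N-1)/N}\right)d^2\left(\iota^\sigma(G)\right)^2(D+1)\lceil (D+1)/2\rceil,\]
and taking square roots produces exactly \eqref{eq:volume}. The arithmetic of this combination is routine. The main obstacle is the second step: pinning down the Buser inequality in the precise frustration-index form, with the sharp constant $16\ln 2$ and the correct power of $d$. One must verify that the Cheeger-type quantity appearing in the \cite{Liu 19} Buser estimate is genuinely the one controlled by $\iota^\sigma(G)/\mathrm{vol}(G)$, and that passing from the normalized gradient $\Gamma^\sigma$ to the combinatorial frustration count contributes precisely the factor $d$ (and no extra $N$-dependence, the full $N$-dependence of \eqref{eq:volume} coming solely from the lower bound).
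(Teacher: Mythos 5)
Your proposal follows essentially the same route as the paper: combine the lower bound from Corollary \ref{cor:CD0N} with the Buser inequality $\lambda^{\sigma}\leq 16(\ln 2)\,d\,(h^{\sigma})^{2}$ of \cite[Theorem 5.1]{Liu 19}, where the step you flagged as the main obstacle is resolved simply by taking $\Omega=V$ in the definition of the signed Cheeger constant, which gives $h^{\sigma}\leq \iota^{\sigma}(G)/\mathrm{vol}(G)$. Your arithmetic and final constant agree with the paper's.
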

\begin{proof}
    Recall the Buser type inequalities for signed graph $(G,\sigma)$ satisfying $CD^\sigma(0,\infty)$ from \cite[Theorem 5.1]{Liu 19}. It tells that the first eigenvalue $\lambda^\sigma$ satisfies
    \begin{equation}\label{eq:Buser}
        \lambda^\sigma\leq 16(\ln 2)d\left(h^\sigma\right)^2,
    \end{equation}
    where $h^\sigma$ is the Cheeger constant for the signed graph $(G,\sigma)$ introduced in \cite{AtayLiu,LLPP}. Recall that 
    \[h^\sigma=\min_{\Omega\subseteq V}\frac{\iota^\sigma(G_\Omega)+|\partial \Omega|}{\mathrm{vol}(\Omega)},\]
    where $\mathrm{vol}(\Omega)=\sum_{x\in \Omega}d_x$, $|\partial\Omega|=|\{\{x,y\}\in E: x\in \Omega, y\not\in \Omega\}|$, $G_\Omega=(\Omega, E_\Omega)$ is the induced subgraph of $\Omega$, and $\iota^\sigma(G_\Omega)$ is the frustration index of $G_\Omega$ with the sign inherited from $(G,\sigma)$.
    
    Then the estimate (\ref{eq:volume}) follows from combining Corollary \ref{cor:CD0N} and (\ref{eq:Buser}) and the fact that 
    \[h^\sigma\leq \frac{\iota^\sigma(G)}{\mathrm{vol}(G)}.\]
\end{proof}

Recall the volume $\mathrm{vol}(G)$ of a graph $G$ equals twice the number of edges in $G$, and the frustration index of the corresponding signed graph $(G,\sigma)$ is counting twice the minimum number of negative edges over all signs that are switching equivalent to the given $\sigma$. Roughly speaking, Theorem \ref{thm:volume} tells that, if a signed graph $(G,\sigma)$ satisfies $CD^\sigma(0,\infty)$, then its frustration index should be large compared with its volume. In this sense, Theorem \ref{thm:volume} provides one criterion for checking whether a signed graph is nonnegatively curved or not.

\begin{example}\label{example:criterion}
    We consider hypercube graphs $Q^n$ of dimension $n\in \mathbb{N}$. Recall that $Q^n$ is the Cartesion product of $n$ copies of the complete graph $K_2$ with two vertices. We assign a sign $\sigma$ to $Q^n$ with precisely one negative edge. We hope to check whether $(Q^n,\sigma)$ satisfies $CD^\sigma(0,\infty)$ or not. 
    
    We recall that the hypercube $Q^n$ with the all-positive sign satisfies $CD(\frac{2}{n},\infty)$ (see, e.g., \cite[Example 7.15]{CLP20}). When $n=1$, the hypercube graph $Q^1=K_2$ has no cycle and hence $(Q^1,\sigma)$ is balanced and satisfies $CD^\sigma(2,\infty)$.
    
    Clearly, $(Q^n,\sigma),\,n\geq 2$ is unbalanced and the frustration index $\iota^\sigma(Q^n)$ equals $2$. Suppose that $(Q^n,\sigma),\,n\geq 2$ satisfies $CD^\sigma(0,\infty)$. Then we derive from Theorem \ref{thm:volume} that
    \[ 2^{2n}\leq 512\ln 2\cdot (n+1)\left\lceil\frac{n+1}{2}\right\rceil.\]
    This is a contradiction for $n\geq 7$.
    Therefore, we conclude that $(Q^n,\sigma),\,n\geq 7$ does not satisfy $CD^\sigma(0,\infty)$. In fact, one can check directly by the curvature matrix method for calculating $CD^\sigma(K,N)$ developed in \cite{HL22} that $(Q^2,\sigma)$ satisfies $CD^\sigma(0,\infty)$, and  $(Q^n,\sigma),\,n\geq 3$ does not satisfy $CD^\sigma(0,\infty)$. We present the details of the proof in Appendix \ref{section:Appendix}.

    For comparison, we consider the Cartesian product of $(Q^2,\sigma)$ with $Q^{n-2}$ associated the all-positive sign for $n\geq 2$. The underlying graph of this Cartesian product is $Q^n$, and the resulting sign $\sigma^\times$ on $Q^n$ produces $2^{n-2}$ negative edges. By the results about curvature of Cartesian products \cite[Appendix A.1]{Liu 19}, we conclude that $CD^{\sigma^\times}(0,\infty)$ holds for $(Q^n,\sigma^\times)$, $n\geq 2$.
\end{example}
\subsection{Graphs with the all-negative signs} In this subsection, we discuss another application of Theorem \ref{eigenvalue estimate}. Let $G=(V,E)$ be a finite connected graph. We assign a particular sign $\sigma_{-}: E\to \{\pm 1\}$ such that $\sigma_{-}\equiv -1$. For this signed graph $(G,\sigma_{-})$, the corresponding Laplacian $\Delta^{\sigma_{-}}$ has the following particular formulation:
\begin{equation*}
    \Delta^{\sigma_{-}}f(x):=-\frac{1}{d_x}\sum_{y\sim x}(f(y)+f(x)),\,\,\text{for any}\,\,f:V\to \mathbb{R}.
\end{equation*}
Let us denote by $0=\lambda_1<\lambda_2\leq\cdots\leq \lambda_{|V|}\leq 2$ the eigenvalues of the graph Laplacian $\Delta$ (which is the Laplaican $\Delta^\sigma$ with $\sigma$ being the all-positive sign). It is direct to check that the eigenvalues $0\leq \lambda_1^{\sigma_{-}}<\lambda_2^{\sigma_{-}}\leq\cdots\leq \lambda_{|V|}^{\sigma_{-}}\leq 2$ of the Laplacian $\Delta^{\sigma_{-}}$ satisfy
\begin{equation}\label{eq:bipartite}
    \lambda_i^{\sigma_{-}}=2-\lambda_{|V|-i+1},\,\,\text{for every}\,\,i=1,2,\ldots, |V|.
\end{equation}
Recall that 
\[2-\lambda_{|V|}=0\,\,\text{if and only if the graph $G$ is bipartite}.\]  A graph $G=(V,E)$ is bipartite if there exists a partition $V=V_1\sqcup V_2$ such that edges only connect vertices from different subsets $V_1, V_2$. So the first nonzero eigenvalue $\lambda^{\sigma_{-}}$ of $\Delta^{\sigma_{-}}$ coincides with $2-\lambda_{|V|-1}$ if $G$ is bipartite and with $2-\lambda_{|V|}$ if $G$ is non-bipartite. Indeed, for bipartite graphs, the eigenvalues are symmetric with respect to $1$, in particular, we have  $2-\lambda_{|V|-1}=\lambda_2$. Moreover,  $CD^{\sigma_{-}}(K,N)$ holds if and only if $CD(K,N)$ holds, since the all-positive sign and all-negative sign on a bipartite graph are switching equivalent. So we do not gain new insights to consider the all-negative sign $\sigma_{-}$ on a bipartite graph. For non-bipartite graphs, we have the following observation as a direct consequence of Theorem \ref{eigenvalue estimate}.

\begin{cor}\label{cor:allnegative}
    Let $G=(V,E)$ be a finite connected non-bipartite graph such that $(G,\sigma_{-})$ satisfies $CD^{\sigma_{-}}(0,N)$ for $N\in (1,\infty]$. Then we have 
    \begin{equation*}
        2-\lambda_{|V|}\geq \frac{1}{4\left(1+\sqrt{(N-1)/N}\right)}\cdot\frac{1}{d(D+1)\left\lceil (D+1)/2\right\rceil}.
    \end{equation*}
\end{cor}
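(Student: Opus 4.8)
The plan is to recognize this statement as a direct application of Corollary \ref{cor:CD0N} to the all-negative signed graph $(G,\sigma_{-})$, once the quantity $2-\lambda_{|V|}$ is reinterpreted as a nonzero eigenvalue of the signed Laplacian $\Delta^{\sigma_{-}}$ via the spectral correspondence (\ref{eq:bipartite}). Essentially all of the analytic work has already been carried out in Theorem \ref{eigenvalue estimate} and its specialization Corollary \ref{cor:CD0N}, so what remains is a translation.

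First I would invoke the hypothesis that $(G,\sigma_{-})$ satisfies $CD^{\sigma_{-}}(0,N)$ with $N\in(1,\infty]$, which is exactly the setting in which Corollary \ref{cor:CD0N} applies and bounds every nonzero eigenvalue of $\Delta^{\sigma_{-}}$ from below. Next I would use the identity $\lambda_i^{\sigma_{-}}=2-\lambda_{|V|-i+1}$ from (\ref{eq:bipartite}) to identify the smallest eigenvalue of $\Delta^{\sigma_{-}}$ as $\lambda_1^{\sigma_{-}}=2-\lambda_{|V|}$. Here the non-bipartiteness of $G$ is precisely the ingredient guaranteeing that this eigenvalue is nonzero: since $G$ is connected and non-bipartite, one has $\lambda_{|V|}<2$, hence $2-\lambda_{|V|}>0$; equivalently, $(G,\sigma_{-})$ is unbalanced and its smallest eigenvalue does not vanish.

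Finally, applying Corollary \ref{cor:CD0N} to the nonzero eigenvalue $\lambda^{\sigma_{-}}=2-\lambda_{|V|}$ of $\Delta^{\sigma_{-}}$ yields
\begin{equation*}
  2-\lambda_{|V|}=\lambda^{\sigma_{-}}\geq \frac{1}{4\left(1+\sqrt{(N-1)/N}\right)}\cdot\frac{1}{d(D+1)\left\lceil (D+1)/2\right\rceil},
\end{equation*}
which is exactly the claimed estimate.

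I do not anticipate any genuine obstacle, as the entire curvature-analytic content is already packaged in Corollary \ref{cor:CD0N}. The only point meriting care is the verification that $2-\lambda_{|V|}$ is nonzero so that Corollary \ref{cor:CD0N} may be invoked, and this is supplied by the non-bipartite assumption. Were $G$ bipartite, this eigenvalue would vanish and the bound would be vacuous; moreover, as noted just before the statement, on a bipartite graph $\sigma_{-}$ is switching equivalent to the all-positive sign, so no new information would be gained in that case.
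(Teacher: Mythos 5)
Your proposal is correct and follows exactly the route the paper intends: the corollary is stated there as a direct consequence of Corollary \ref{cor:CD0N} combined with the identification $\lambda_1^{\sigma_{-}}=2-\lambda_{|V|}$ from (\ref{eq:bipartite}), with non-bipartiteness ensuring this eigenvalue is nonzero (equivalently, $(G,\sigma_{-})$ is unbalanced) so that the estimate applies. Nothing is missing.
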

To conclude, we prove Theorem \ref{thm:intr_3} stated in the Introduction.
\begin{proof}[Proof of Theorem \ref{thm:intr_3}]
    Recall from \cite[Proposition 3.6]{Liu 19} that a signed graph $(G,\sigma)$ with the sign of every $3$- and $4$-cycle being $+1$ satisfies $CD^\sigma(K,N)$ if and only if the underlying graph $G$ satisfies $CD(K,N)$. For a graph $G$ with the all-negative sign $\sigma_{-}$, the sign of every $4$-cycle is $+1$. Therefore, for a triangle-free graph $G$ associated with the sign $\sigma_{-}$, it satisfies $CD^{\sigma_{-}}(K,N)$ if and only the graph $G$ satisfies $CD(K,N)$. Therefore, the theorem follows directly from Theorem \ref{eigenvalue estimate} and Corollary \ref{cor:allnegative}, and the relations (\ref{eq:bipartite}).
\end{proof}

\vskip 0.5cm
\section{Eigenvalue estimates for p-Laplacians on signed graphs}

In this section, we discuss lower bound estimates for the first nonzero eigenvalue of $p$-Laplacian on signed graphs. For studies on $p$-Laplacians on signed graphs, we refer to \cite{JMZ22,GLZ23}. Using similar tricks of strong nodal domain paths as in the proof of Theorem \ref{eigenvalue estimate}, we derive an lower bound eigenvalue estimate in terms of diameter and volume of the underlying graph. Via introducing a proper notion of curvature dimension inequalities for the $p$-Laplacian on signed graphs, in analogue with the linear case developed in \cite{Liu 19}, we further show a Lichnerowicz type  estimates. Those results extend the previous ones of Wang and Huang \cite{Wang 21} on unsigned graphs.

For the readers' convenience, we first recall some preliminaries on the $p$-Laplacian of signed graphs.
For $p>1$, the signed $p$-Laplacian $\Delta_{p}^{\sigma}$ on a signed graph $(G, \sigma)$ is defined as
\begin{equation}\label{p-Laplacian}
  \Delta_{p}^{\sigma}f(x):=\frac{1}{d_{x}}\sum_{y\sim x}|\sigma_{xy}f(y)-f(x)|^{p-2}(\sigma_{xy}f(y)-f(x)),
\end{equation}
for any $f: V\to \mathbb{R}$ and any $x\in V$. Here, we use the convention that $|t|^{p-2}t=0$ when $t=0$.

If a nonzero function $f:V\rightarrow\mathbb{R}$ and a real number $\lambda_p^\sigma$ satisfies
\begin{equation*}
  -\Delta_{p}^{\sigma}f(x)=\lambda_{p}^{\sigma}|f(x)|^{p-2}f(x),\,\,\text{for any}\,\,x\in V, 
\end{equation*}
then we call $f$ an eigenfunction of $\Delta_{p}^{\sigma}$ on $(G,\sigma)$ to the eigenvalue $\lambda_{p}^{\sigma}$. In fact, the eigenvalues and eigenfunctions of $\Delta_p^\sigma$ are precisely the critical values and critical points of the following $p$-Rayleigh quotient: 
\begin{equation*}
  \mathcal{R}_{p}^{\sigma}(f):=\frac{\sum_{\{x,y\}\in E}|f(x)-\sigma_{xy}f(y)|^{p}}{\sum_{x\in V}|f(x)|^{p}d_{x}}, \,\,\text{for any}\,\,f:V\to\mathbb{R}.
\end{equation*}
In particular, all eigenvalues of $\Delta_p^\sigma$ are nonnegative. Moreover, the set of eigenvalues of $\Delta_p^\sigma$ is invariant under taking switching operations \cite[Proposition 2.5]{GLZ23}.

We focus on the first (smallest) nonzero eigenvalue $\lambda_p^\sigma$ of $\Delta_{p}^{\sigma}$ on a connected signed graph $(G,\sigma)$. If $(G,\sigma)$ is unbalanced, then the eigenvalues of its Laplacian $\Delta^\sigma_p$ are all positive \cite[Proposition 4.1]{GLZ23}. Hence, we have
\begin{equation*}
  \lambda_{p}^{\sigma}=\inf_{f\not \equiv0} \mathcal{R}_{p}^{\sigma}(f).
\end{equation*}
If, otherwise, $(G,\sigma)$ is balanced, then the smallest eigenvalue is $0$ and the switching function $\tau: V\to \{\pm 1\}$ such that $\sigma^\tau\equiv +1$ is a corresponding eigenfunction. The smallest nonzero eigenvalue $\lambda_p^\sigma$ satisfies (See e.g. \cite[Proposition 2.2]{DM19} and the references therein)
\begin{equation*}
\lambda_{p}^{\sigma}=\inf_{\substack{f\not\equiv 0,\\\sum_{x\in V}|f(x)|^{p-2}f(x)\tau(x) d_{x}=0}}\mathcal{R}_{p}^{\sigma}(f).
\end{equation*}

The following monotonicity properties of the first nonzero eigenvalue is a direct consequence of the monotonicity of \emph{variational eigenvalues}, which was first established in \cite[Theorem 1.1]{Zhang} for graph $p$-Laplacians and later extended to the signed graph setting in \cite[Theorem 1]{Ge 2023}.
\begin{thm}\label{monotonocity}
  Let $(G,\sigma)$ be a signed graph. For $1<p\leq q$, we have
  \begin{equation}\label{monotonicity i}
    2^{-p}\lambda_{p}^{\sigma}\geq 2^{-q}\lambda_{q}^{\sigma},
  \end{equation}
  and
  \begin{equation}\label{monotoniciy ii}
    p(2\lambda_{p}^{\sigma})^{\frac{1}{p}}\leq q(2\lambda_{q}^{\sigma})^{\frac{1}{q}}.
  \end{equation}
\end{thm}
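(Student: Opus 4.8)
The plan is to reduce both monotonicity statements to the monotonicity of the \emph{variational eigenvalues} of $\Delta_p^\sigma$, which is exactly what \cite[Theorem 1.1]{Zhang} (for the all-positive sign) and its signed extension \cite[Theorem 1]{Ge 2023} provide, and then to identify the first nonzero eigenvalue $\lambda_p^\sigma$ with one such variational eigenvalue carried by a \emph{$p$-independent} index $k_0$.

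First I would recall the min-max construction of the variational eigenvalues. On the $p$-sphere $S_p^\sigma:=\{f:\sum_{x\in V}|f(x)|^p d_x=1\}$ the $p$-Rayleigh quotient reduces to $\mathcal{R}_p^\sigma(f)=\sum_{\{x,y\}\in E}|f(x)-\sigma_{xy}f(y)|^p$, and for each $k\geq 1$ one sets
\[
\lambda_p^\sigma(k):=\inf_{\substack{A\subseteq S_p^\sigma,\ A=-A,\ A\text{ compact}\\ \gamma(A)\geq k}}\ \sup_{f\in A}\ \mathcal{R}_p^\sigma(f),
\]
where $\gamma$ denotes the Krasnoselskii genus. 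These numbers are genuine eigenvalues of $\Delta_p^\sigma$, and the cited results assert precisely that for every fixed index $k$ the map $p\mapsto 2^{-p}\lambda_p^\sigma(k)$ is non-increasing and the map $p\mapsto p(2\lambda_p^\sigma(k))^{1/p}$ is non-decreasing on $(1,\infty)$. Hence, once $\lambda_p^\sigma$ is recognized as $\lambda_p^\sigma(k_0)$ for a single $k_0$, the estimates (\ref{monotonicity i}) and (\ref{monotoniciy ii}) follow immediately by specializing these two statements to $k=k_0$ and to the parameters $p\leq q$.

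The identification of $k_0$ splits according to balancedness. When $(G,\sigma)$ is unbalanced, every eigenvalue of $\Delta_p^\sigma$ is positive \cite[Proposition 4.1]{GLZ23}, and the genus-one competitors are the antipodal pairs, so $\lambda_p^\sigma(1)=\inf_{f\not\equiv 0}\mathcal{R}_p^\sigma(f)=\lambda_p^\sigma$; thus $k_0=1$, independently of $p$. When $(G,\sigma)$ is balanced, the switching function $\tau$ with $\sigma^\tau\equiv+1$ gives $\mathcal{R}_p^\sigma(\tau)=0$, so $\lambda_p^\sigma(1)=0$ for all $p$, while connectivity forces $\lambda_p^\sigma(2)>0$. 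One then matches the constrained infimum characterization of $\lambda_p^\sigma$ recalled above (see \cite[Proposition 2.2]{DM19}) with the second variational eigenvalue, giving $\lambda_p^\sigma=\lambda_p^\sigma(2)$, so $k_0=2$, again independent of $p$.

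I expect the main obstacle to be precisely this last identification in the balanced case: one must verify that the infimum of $\mathcal{R}_p^\sigma$ over the constraint $\sum_{x\in V}|f(x)|^{p-2}f(x)\tau(x)d_x=0$ coincides with the second min-max eigenvalue $\lambda_p^\sigma(2)$, and, crucially, that no nonzero eigenvalue lies strictly below it, so that the index $k_0=2$ is forced and remains fixed as $p$ varies. Since in the nonlinear $p$-Laplacian setting the variational spectrum need not exhaust all eigenvalues, this step requires care; it is handled by combining the orthogonality characterization of \cite[Proposition 2.2]{DM19} with the genus lower bound $\gamma\geq 2$ for the admissible competitor sets. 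Once the index is pinned down uniformly in $p$, the remainder is a direct appeal to the cited monotonicity of variational eigenvalues.
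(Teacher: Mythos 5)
Your proposal is correct and follows essentially the same route as the paper: both reduce the statement to the monotonicity of the variational eigenvalues from \cite[Theorem 1]{Ge 2023} and then identify $\lambda_p^\sigma$ with the first variational eigenvalue in the unbalanced case and the second in the balanced case, the index being independent of $p$. Your additional discussion of why the balanced-case identification $\lambda_p^\sigma=\lambda_{p}^\sigma(2)$ needs justification is a reasonable elaboration of a step the paper simply asserts.
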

\begin{proof}
    The variational eigenvalues $\lambda^\sigma_{p,1}\leq \lambda^\sigma_{p,2}\leq \cdots\leq \lambda^\sigma_{p,|V|}$ are defined using Lusternik–Schnirelman theory. We refer to \cite{Ge 2023} and the references therein for the precise definition. The result \cite[Theorem 1]{Ge 2023} tells that for any given $1\leq k\leq |V|$, the variational eigenvalue $\lambda^\sigma_{p,k}$ satisfies the monotonicity properties (\ref{monotonicity i}) and (\ref{monotoniciy ii}) with respective to $p$. Then the theorem follows directly from the fact that the first nonzero eigenvalue $\lambda^\sigma_p$ coincides with $\lambda^\sigma_{p,1}$ when $(G,\sigma)$ is unbalanced, and coincides with $\lambda^\sigma_{p,2}$ when $(G,\sigma)$ is balanced.
\end{proof}

\subsection{An eigenvalue lower bound via diameter and volume} In this subsection, we show the following lower bound estimate for the first nonzero eigenvalue of the $p$-Laplacian on a signed graph.
\begin{thm}\label{p-eigenvalue estimate}
  Let $(G,\sigma)$ be a finite connected signed graph. Denote by $D$ the diameter and by $\mathrm{vol}(G)$ the volume of the underlying graph $G$. Then the first nonzero eigenvalue $\lambda_{p}^{\sigma}$ of the $p$-Laplacian $\Delta^{\sigma}_p$ for any $p>1$ satisfies
  \begin{equation}\label{estimate 1}
    \lambda_{p}^{\sigma}\geq\frac{1}{(D+1)^{p-1}\mathrm{vol}(G)}.
  \end{equation}
\end{thm}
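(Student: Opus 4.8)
The plan is to bound the first nonzero eigenvalue from below by working directly with the $p$-Rayleigh quotient $\mathcal{R}_p^\sigma$, using a strong nodal domain walk argument analogous to the one employed in the proof of Theorem \ref{eigenvalue estimate}, but adapted to the nonlinear setting. First I would take an eigenfunction $f$ corresponding to $\lambda_p^\sigma$ and normalize it so that $\max_{z\in V}|f(z)|=1$, achieved at some vertex $x_0$. The target is to produce, along a suitable path of length at most $D+1$ emanating from $x_0$, a telescoping chain whose endpoints differ by a quantity bounded below by an absolute constant; this will force the numerator of the Rayleigh quotient to be large relative to the denominator.

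The key mechanism is the sign-twisting identity already exploited in the linear case. Along any path $x_0\sim x_1\sim\cdots\sim x_t$, one rewrites the edge differences using the accumulated path signs $\sigma(P_{x_0x_i}):=\sigma_{x_0x_1}\cdots\sigma_{x_{i-1}x_i}$, so that each term $|f(x_i)-\sigma_{x_ix_{i+1}}f(x_{i+1})|^p$ equals $|\sigma(P_{x_0x_i})f(x_i)-\sigma(P_{x_0x_{i+1}})f(x_{i+1})|^p$. The same dichotomy into \textbf{Case 1} and \textbf{Case 2} applies: either there is a short path (length $\le D$) that is a strong nodal domain walk until its final edge, or every short path from $x_0$ is a strong nodal domain walk, in which case (exactly as before) one locates an edge $\{x,y\}$ failing the strong nodal condition with $d(x_0,x)=d(x_0,y)=D$, yielding a path of length $D+1$. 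In either case, the strong nodal domain property (\ref{eq:strongnodal}) guarantees that the twisted values $\sigma(P_{x_0x_i})f(x_i)$ all share the sign of $f(x_0)$ along the walk, while the terminal non-strong edge gives $\sigma(P_{x_0x_t})f(x_t)\le 0$ by (\ref{eq:nonstrongnodal}); hence $|f(x_0)-\sigma(P_{x_0x_t})f(x_t)|\ge |f(x_0)|=1$.

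To convert this into the stated bound, I would apply the power-mean (or Hölder) inequality in place of the Cauchy-Schwarz step used in (\ref{er}): for nonnegative reals $a_0,\dots,a_{t-1}$ one has $\sum_{i=0}^{t-1}a_i^p \ge t^{1-p}\bigl(\sum_{i=0}^{t-1}a_i\bigr)^p$, applied with $a_i=|\sigma(P_{x_0x_i})f(x_i)-\sigma(P_{x_0x_{i+1}})f(x_{i+1})|$, whose triangle-inequality sum dominates $|f(x_0)-\sigma(P_{x_0x_t})f(x_t)|\ge 1$. This produces
\begin{equation*}
  \sum_{\{u,v\}\in E}|f(u)-\sigma_{uv}f(v)|^p \;\ge\; \sum_{i=0}^{t-1}|f(x_i)-\sigma_{x_ix_{i+1}}f(x_{i+1})|^p \;\ge\; t^{1-p}\;\ge\;(D+1)^{1-p},
\end{equation*}
since $t\le D+1$. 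For the denominator, the normalization $|f|\le 1$ gives $\sum_{x\in V}|f(x)|^p d_x \le \sum_{x\in V}d_x=\mathrm{vol}(G)$. Dividing yields $\lambda_p^\sigma=\mathcal{R}_p^\sigma(f)\ge (D+1)^{1-p}/\mathrm{vol}(G)$, which is (\ref{estimate 1}).

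I expect the main obstacle to be the correct choice and justification of the power-mean step, and ensuring the telescoping argument is valid uniformly over both cases. In the balanced case one must separately confirm that an eigenfunction realizing $\lambda_p^\sigma$ still yields a valid starting configuration (here \textbf{Case 2} cannot produce a globally balanced twist, by the same orthogonality-type contradiction as in the linear proof, so the argument goes through with $t\le D+1$). A minor technical point is that the absolute-value convention $|t|^{p-2}t$ for the $p$-Laplacian must be handled at zeros of $f$, but since the lower-bound argument only uses the Rayleigh quotient characterization and the triangle inequality, these degeneracies cause no difficulty.
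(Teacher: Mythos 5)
Your proposal is correct and follows essentially the same route as the paper's proof: the Rayleigh quotient restricted to a path, the sign-twisted telescoping combined with the strong nodal domain walk dichotomy (giving a path of length $t\le D+1$ whose terminal twisted value is nonpositive), the power-mean inequality $n^{p-1}\sum|a_i|^p\ge|\sum a_i|^p$ in place of Cauchy--Schwarz, and the trivial volume bound on the denominator. The paper's proof is exactly this argument, so no further comparison is needed.
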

In case that $(G,\sigma)$ is balanced, this estimate is well-known for linear graph Laplacian ($p=2$), see \cite[Lemma 1.9]{Chung97} and is extended to graph $p$-Laplacians for any $p>1$ recently by Wang and Huang \cite{Wang 21}.
\begin{proof}
  Let $f$ be an eigenfunction of $\Delta^\sigma_P$ to the first nonzero eigenvalue $\lambda_p^\sigma$. By rescaling, we can assume that
  \begin{equation*}
    \max_{z\in V}f(z)=1\quad \mathrm{and} \quad\min_{z\in V}f(z)\geq-1.
  \end{equation*} 
  Let $x_{0}\in V$ be a vertex such that $f(x_{0})=\max_{z\in V}f(z)=1$. Let $x_{0}\sim x_{1}\sim ...\sim x_{t}$ be a path starting from the vertex $x_{0}$. By definition, we have
  \begin{equation}\label{rayleigh}
    \lambda_{p}^{\sigma}=\frac{\sum_{\{x,y\}\in E}|\sigma_{xy}f(y)-f(x)|^{p}}{\sum_{x}|f(x)|^{p}d_{x}}\geq\frac{\sum_{i=0}^{t-1}|f(x_{i})-\sigma_{x_{i}x_{i+1}}f(x_{i+1})|^{p}}{\mathrm{vol}(G)\cdot |f(x_{0})|^{p}}.
  \end{equation}
Then, we have 
\begin{align}\label{inequality}
       \sum_{i=0}^{t-1}|f(x_{i})-\sigma_{x_{i}x_{i+1}} f(x_{i+1})|^{p} & =|f(x_{0})-\sigma_{x_{0}x_{1}} f(x_{1})|^{p}+|\sigma_{x_{0}x_{1}}|^{p}|f(x_{i})-\sigma_{x_{i}x_{i+1}} f(x_{i+1})|^{p}\notag\\
       &\quad+\cdots+|\sigma_{x_{0}x_{1}}\sigma_{x_1x_2}\cdots\sigma_{x_{i-1}x_{i}}|^{p}|f(x_{i})-\sigma_{x_{i}x_{i+1}} f(x_{i+1})|^{p}\notag\\
       &\quad+\cdots+|\sigma_{x_{0}x_{1}}\sigma_{x_1x_2}\cdots\sigma_{x_{t-2}x_{t-1}}|^{p}|f(x_{t-1})-\sigma_{x_{t-1}x_{t}} f(x_{t})|^{p}\notag\\
         & \geq\frac{1}{t^{p-1}}|f(x_{0})-\sigma(P_{x_{0}x_{t}}) f(x_{t})|^{p},
\end{align}  
  where the last inequality follows from the inequality
  \begin{equation*}\label{sim CS}
    n^{p-1}(|a_{1}|^{p}+|a_{2}|^{p}+...+|a_{n}|^{p})\geq|a_{1}+a_{2}+...+a_{n}|^{p},\,\,\text{for}\,\,p>1,\,a_i\in \mathbb{R},\,i=1,2,\ldots,n.
  \end{equation*}
 By a similar argument as in the proof of Theorem \ref{eigenvalue estimate}, there exists a path $x_0\sim x_1\sim\cdots\sim x_t$ with $t\leq D+1$, such that $x_0\sim x_1\sim\cdots\sim x_{t-1}$ is a strong nodal domain walk of the function $f$ but $x_{t-1}\sim x_t$ is not. So we have $\sigma(P_{x_{0}x_{t}}) f(x_{t})\leq 0$ and hence
 \begin{equation}\label{equation:path}
     \frac{1}{t^{p-1}}|f(x_{0})-\sigma(P_{x_{0}x_{t}}) f(x_{t})|^{p}\geq\frac{1}{t^{p-1}}\geq\frac{1}{(D+1)^{p-1}}.
     \end{equation}
 Combining inequalities (\ref{rayleigh}), (\ref{inequality}) and (\ref{equation:path}), the proof is completed.  
  \end{proof}

The following corollary is a direct consequence of Theorem \ref{p-eigenvalue estimate} and the monotonicity property in Theorem \ref{monotonocity}.
\begin{thm}
  Let $(G,\sigma)$ be a finite connected signed graph. Then we have  for the first nonzero eigenvalue $\lambda_p^\sigma$ that
  \begin{equation*}
    \lambda_{p}^{\sigma}\geq\frac{1}{2}\left(\frac{2}{p\cdot \mathrm{vol}(G)}\right)^{p}
  \end{equation*} 
\end{thm}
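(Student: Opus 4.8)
The plan is to remove the diameter dependence in Theorem \ref{p-eigenvalue estimate} by feeding its bound through the $p$-monotonicity of Theorem \ref{monotonocity} at an auxiliary exponent and then letting that exponent tend to $1$. Fix $p>1$. The inequality (\ref{monotoniciy ii}) asserts that the map $r\mapsto r(2\lambda_r^\sigma)^{1/r}$ is nondecreasing, so for every $s$ with $1<s\le p$ we have $s(2\lambda_s^\sigma)^{1/s}\le p(2\lambda_p^\sigma)^{1/p}$. Rearranging this to isolate $\lambda_p^\sigma$ gives
\begin{equation*}
\lambda_p^\sigma\ge \frac12\left(\frac{s}{p}\right)^p (2\lambda_s^\sigma)^{p/s}.
\end{equation*}

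Next I would substitute the diameter-volume bound of Theorem \ref{p-eigenvalue estimate} applied at the exponent $s$, namely $\lambda_s^\sigma\ge \big((D+1)^{s-1}\mathrm{vol}(G)\big)^{-1}$. Since $p/s>0$, raising to the power $p/s$ preserves the inequality, which yields
\begin{equation*}
\lambda_p^\sigma\ge \frac12\left(\frac{s}{p}\right)^p \left(\frac{2}{(D+1)^{s-1}\mathrm{vol}(G)}\right)^{p/s}.
\end{equation*}

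The decisive step is to let $s\to 1^+$. The left-hand side is a fixed number independent of $s$, while on the right the factor $(D+1)^{-(s-1)p/s}$ tends to $(D+1)^0=1$, so the entire diameter dependence disappears; simultaneously $(s/p)^p\to p^{-p}$ and $(2/\mathrm{vol}(G))^{p/s}\to (2/\mathrm{vol}(G))^p$. Passing to the limit in the inequality above therefore gives
\begin{equation*}
\lambda_p^\sigma\ge \frac12\,p^{-p}\left(\frac{2}{\mathrm{vol}(G)}\right)^p=\frac12\left(\frac{2}{p\,\mathrm{vol}(G)}\right)^p,
\end{equation*}
as claimed.

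I expect no serious obstacle, since the argument is essentially a one-parameter optimization in the auxiliary exponent $s$. The two points requiring care are the direction of the monotonicity in (\ref{monotoniciy ii}) — one must use exponents \emph{below} $p$ to obtain a lower bound on $\lambda_p^\sigma$ — and the justification that the limit $s\to1^+$ may be taken termwise, which is immediate because each of the three factors converges and $\lambda_p^\sigma$ is independent of $s$. The conceptual heart is recognizing that the choice $s\to 1^+$, rather than $s\to\infty$, is what neutralizes the $(D+1)^{s-1}$ growth: the exponent controlling the diameter must be driven to zero.
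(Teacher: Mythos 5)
Your argument is correct and is essentially the paper's own proof: the paper likewise applies the monotonicity inequality (\ref{monotoniciy ii}) with an auxiliary exponent $q\in(1,p]$, inserts the bound of Theorem \ref{p-eigenvalue estimate} at that exponent, and lets $q\to 1$ to kill the $(D+1)^{q-1}$ factor. The only cosmetic difference is that you rearrange to isolate $\lambda_p^\sigma$ before taking the limit, whereas the paper takes the limit first.
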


\begin{proof}
  For $p\geq q>1$, we have
  \begin{equation*}
    p(2\lambda_{p}^{\sigma})^{\frac{1}{p}}\geq q(2\lambda_{q}^{\sigma})^{\frac{1}{q}}
    \geq q\left(\frac{2}{(D+1)^{q-1}\mathrm{vol}(G)}\right)^{\frac{1}{q}}.
  \end{equation*}
  Letting $q\to 1$, we complete the proof. 
\end{proof}


\subsection{A Lichnerowicz type eigenvalue estimate} In this subsection, we present a Lichnerowicz type estimate for the first nonzero eigenvalue of the $p$-Laplacian on a signed graph. For that purpose, we first introduce a notion of curvature dimension inequalities involving $p$-Laplacians of signed graphs. 

We start by deriving the linearization of the $p$-Laplacian $\Delta^\sigma_p$ at point $f$ on the signed graph $(G,\sigma)$. By a straightforward calculation, we have for any $\varphi: V\to \mathbb{R}$ that
\begin{align*}
     &\frac{\partial}{\partial\varepsilon}\bigg|_{\varepsilon=0}\Delta_{p}^{\sigma}(f+\varepsilon\varphi)(x)\\ =&\frac{\partial}{\partial\varepsilon}\bigg|_{\varepsilon=0}\frac{1}{d_{x}}\sum_{y\sim x}|\sigma_{xy}(f(y)+\varepsilon\varphi(y))-f(x)+\varepsilon\varphi(x)|^{p-2}(\sigma_{xy}(f(y)+\varepsilon\varphi(y))-f(x)+\varepsilon\varphi(x))\\
       =&(p-1)\frac{1}{d_{x}}\sum_{y\sim x}|\sigma_{xy}f(y)-f(x)|^{p-2}(\sigma_{xy}\varphi(y)-\varphi(x)).
\end{align*}
Note that the above calculation only works at $x\in V$ such that $\sigma_{xy}f(y)-f(x)\neq 0$ for each $y\sim x$ when $1<p<2$.

Therefore, we define the linearized signed $p$-Laplacian $ L_{p,f}^{\sigma}$ on signed graph at point $f$ by
\begin{equation}\label{linearized operator}
  L_{p,f}^{\sigma}\varphi(x):=\frac{1}{d_{x}}\sum_{y\sim x}|\sigma_{xy}f(y)-f(x)|^{p-2}(\sigma_{xy}\varphi(y)-\varphi(x)),
\end{equation}
and a modified linearized $p$-Laplacian $\mathscr{L}^\sigma_{p,f}$ at point $f$ by
\begin{equation}\label{eq:modified linearized operator}
\mathscr{L}^\sigma_{p,f}\varphi(x):=\frac{1}{d_{x}}\sum_{y\sim x}|\sigma_{xy}f(y)-f(x)|^{p-2}(\varphi(y)-\varphi(x)).
\end{equation}
In case $1<p<2$, they are well-defined only at $x\in V$ satisfying $\sigma_{xy}f(y)-f(x)\neq 0$ for each $y\sim x$.

Then, we introduce Bakry-\'Emery type curvature dimension inequalities for the $p$-Laplacian on a signed graph. 
\begin{defn}\label{def:CDpsigma]}
Let $(G,\sigma)$ be a signed graph and $f,g: V\to \mathbb{R}$ be two functions on it. 
    For any $p>1$ and any $x\in V$, we define
\begin{equation}
    \Gamma_p^\sigma(f,g)(x):=\frac{1}{2d_{x}}\sum_{y\sim x}|\sigma_{xy}f(y)-f(x)|^{p-2}(\sigma_{xy}f(y)-f(x))(\sigma_{xy}g(y)-g(x)).
\end{equation}
For $1<p<2$ and $x\in V$ such that $\sigma_{xy}f(y)-f(x)\neq 0$ holds at each neighbor $y\sim x$, or for $p\geq 2$ and any $x\in V$, we define
\begin{equation}
    \Gamma_{p,2}^\sigma(f,f)(x):=\frac{1}{2}\mathscr{L}_{p,f}^\sigma(\Gamma^\sigma_p(f,f))(x)-\Gamma_p^\sigma(f,\Delta_p^\sigma f)(x).
\end{equation}
\end{defn}

\begin{rem}
Let $f,g:V\to \mathbb{R}$ be two functions.
    When $p\geq 2$ at any $x\in V$, or when $1<p<2$ at $x\in V$ with $\sigma_{xy}f(y)-f(x)\neq 0$ for each $y\sim x$, the operator $\Gamma_p^\sigma$ can be expressed as 
    \begin{equation*}
  \Gamma_{p}^{\sigma}(f,g)(x):=\frac{1}{2}\left(\mathscr{L}^\sigma_{p,f}(fg)(x)-g(x)L_{p,f}^{\sigma}f(x)-f(x)L_{p,f}^{\sigma}g(x)\right),
\end{equation*} 
\end{rem}


  \begin{defn}\label{def:Cdp}
   We say a signed graph $(G,\sigma)$ satisfies the curvature-dimension inequality $CD_{p}^{\sigma}(K,N)$ at $x\in V$ for $p>1$, $K\in \mathbb{R}$ and $N\in(0,\infty]$ if
    \begin{equation}\label{CDp condition}
        \Gamma_{p,2}^{\sigma}(f,f)(x)\geq\frac{1}{N}(\Delta_{p}^{\sigma}f(x))^{2}+K(\Gamma^{\sigma}_{p}(f,f)(x))^{\frac{2p-2}{p}}
    \end{equation}
    holds for any $f:V\rightarrow\mathbb{R}$ such that $\sigma_{xy}f(y)-f(x)\neq0$ for every neighbor $y\sim x$.
    We say the signed graph $(G,\sigma)$ satisfies the curvature dimension inequality $CD_{p}^{\sigma}(K,N)$ if it satisfies $CD_{p}^{\sigma}(K,N)$ at every vertex $x$.
\end{defn}

\begin{rem}
   The curvature-dimension condition $CD_p(K,N),\,p>1$ on a graph $G$, i.e., a signed graph $(G,\sigma)$ with $\sigma\equiv +1$ being the all-positive sign, has been introduced and studied in \cite{Wang 21,Wang 24}.The condition $CD^\sigma_p(K,N)$ with $p=2$ of a signed graph $(G,\sigma)$ was initiated in \cite{Liu 19}. Our definition is a generalization of both notions.  Notice that the exponent $\frac{2p-2}{p}$ of $\Gamma_p^\sigma(f,f)(x)$ is chosen to ensure the inequality (\ref{CDp condition}) is invariant under scaling the function $f$.
\end{rem}
\begin{prop}(Switching invariance). Let $(G,\sigma)$ be a signed graph and $\tau: V\to \{\pm 1\}$ be a switching function. Then
$(G,\sigma)$ satisfies $CD_p^\sigma(K,N)$ at a vertex $x\in V$ for  $p>1$, $K\in \mathbb{R}$ and $N\in (0,\infty]$ if and only if the signed graph $(G,\sigma^\tau)$ satisfies $CD_p^{\sigma^\tau}(K,N)$ at $x$.
\end{prop}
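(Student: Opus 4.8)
The plan is to establish the switching invariance of $CD_p^\sigma(K,N)$ by tracking how each ingredient of the defining inequality (\ref{CDp condition}) transforms under the switching operation $\sigma\mapsto\sigma^\tau$. The guiding principle, which already underlies the switching invariance of the linear case in \cite[Proposition 3.5]{Liu 19}, is that switching the sign by $\tau$ is compensated by the substitution $f\mapsto \tau f$ at the level of functions. Concretely, I would first record the elementary identity
\begin{equation*}
\sigma^\tau_{xy}(\tau f)(y)-(\tau f)(x)=\tau(x)\bigl(\sigma_{xy}f(y)-f(x)\bigr),
\end{equation*}
which follows directly from $\sigma^\tau_{xy}=\tau(x)\sigma_{xy}\tau(y)$ and $\tau(x)^2=1$. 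This single identity is the engine of the whole argument: it says that the "signed difference" $\sigma_{xy}f(y)-f(x)$ is switched merely by the scalar factor $\tau(x)\in\{\pm1\}$ when one simultaneously replaces $\sigma$ by $\sigma^\tau$ and $f$ by $\tau f$.

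Next I would propagate this identity through each operator appearing in Definition \ref{def:CDpsigma]} and Definition \ref{def:Cdp}. Since $|\tau(x)|=1$, the weight $|\sigma^\tau_{xy}(\tau f)(y)-(\tau f)(x)|^{p-2}=|\sigma_{xy}f(y)-f(x)|^{p-2}$ is \emph{unchanged} by switching. From this one checks the transformation rules
\begin{equation*}
\Delta_p^{\sigma^\tau}(\tau f)(x)=\tau(x)\,\Delta_p^\sigma f(x),\qquad
\Gamma_p^{\sigma^\tau}(\tau f,\tau g)(x)=\tau(x)^2\,\Gamma_p^\sigma(f,g)(x)=\Gamma_p^\sigma(f,g)(x),
\end{equation*}
where the second identity uses that both signed differences pick up the \emph{same} factor $\tau(x)$, whose square is $1$; in particular $\Gamma_p^\sigma(f,f)$ is switching invariant. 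For the modified linearized operator $\mathscr{L}^\sigma_{p,f}$ I would verify $\mathscr{L}^{\sigma^\tau}_{p,\tau f}\varphi(x)=\mathscr{L}^\sigma_{p,f}\varphi(x)$ whenever $\varphi$ itself is switching invariant (such as $\varphi=\Gamma_p^\sigma(f,f)$), again because the defining weights are unchanged and the differences $\varphi(y)-\varphi(x)$ do not involve the sign. Combining these with the definition of $\Gamma_{p,2}^\sigma$ gives $\Gamma_{p,2}^{\sigma^\tau}(\tau f,\tau f)(x)=\Gamma_{p,2}^\sigma(f,f)(x)$, and the admissibility condition $\sigma^\tau_{xy}(\tau f)(y)-(\tau f)(x)\neq0$ for all $y\sim x$ is equivalent to $\sigma_{xy}f(y)-f(x)\neq0$ for all $y\sim x$ by the engine identity.

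With these transformation rules in hand, the proof closes by a change of variables in the quantifier. Suppose $(G,\sigma)$ satisfies $CD_p^\sigma(K,N)$ at $x$; to verify the inequality for $(G,\sigma^\tau)$ at an arbitrary admissible test function $g$, I would set $f:=\tau g$ (so that $g=\tau f$, using $\tau^2\equiv1$), observe that $f$ is admissible for $\sigma$ exactly when $g$ is admissible for $\sigma^\tau$, and then substitute the identities above into (\ref{CDp condition}): the term $(\Delta_p^{\sigma^\tau}g)^2=\tau(x)^2(\Delta_p^\sigma f)^2=(\Delta_p^\sigma f)^2$ and the term $(\Gamma_p^{\sigma^\tau}(g,g))^{(2p-2)/p}=(\Gamma_p^\sigma(f,f))^{(2p-2)/p}$ are both preserved, as is $\Gamma_{p,2}^{\sigma^\tau}(g,g)=\Gamma_{p,2}^\sigma(f,f)$, so the $\sigma^\tau$-inequality at $g$ is literally the $\sigma$-inequality at $f$. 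This proves one implication, and the converse follows by symmetry since $(\sigma^\tau)^\tau=\sigma$. I do not expect a genuine obstacle here; the only points requiring care are bookkeeping the factors of $\tau(x)$ (all squared factors collapse to $1$, all single factors cancel against $\tau(x)^2=1$ in the quadratic terms) and confirming that the admissibility domain on which $\Gamma_{p,2}^\sigma$ is defined is itself switching invariant, which is immediate from the engine identity. The main thing to state cleanly is that switching invariance holds \emph{pointwise at each fixed $x$}, so that the global statement follows vertex by vertex.
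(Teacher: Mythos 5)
Your proposal is correct and follows essentially the same route as the paper: the paper's proof likewise reduces everything to the identity $\sigma^\tau_{xy}(\tau f)(y)-(\tau f)(x)=\tau(x)(\sigma_{xy}f(y)-f(x))$, records the transformation rules $\Gamma^{\sigma^{\tau}}_{p}(\tau f,\tau f)=\Gamma^{\sigma}_{p}(f,f)$, $\Delta_p^{\sigma^\tau}(\tau f)=\tau\Delta_p^{\sigma}f$, hence $\Gamma^{\sigma^{\tau}}_{p,2}(\tau f,\tau f)=\Gamma^{\sigma}_{p,2}(f,f)$, and concludes by the substitution $f\mapsto\tau f$ together with the invariance of the admissibility condition. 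Your write-up simply makes explicit the bookkeeping (the $|\cdot|^{p-2}$ weights and the behavior of $\mathscr{L}^\sigma_{p,f}$) that the paper leaves to the reader.
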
   
\begin{proof}
Let $f: V\to \mathbb{R}$ be a function. Recall that $\sigma^\tau_{xy}=\tau(x)\sigma_{xy}\tau(y)$. Then we observe that 
$\sigma_{xy}f(y)-f(x)\neq 0$ for any $y\sim x$ if and only if $\sigma^\tau_{xy}(\tau f)(y)-(\tau f)(x)\neq 0$ for any $y\sim x$. Moreover, we check 
\begin{equation*}
    \Gamma^{\sigma^{\tau}}_{p}(\tau f,\tau f)(x)=\Gamma^{\sigma}_{p}(f, f)(x), \,\,\Delta_p^{\sigma^\tau}(\tau f)(x)=\tau(x)\Delta_p^{\sigma}f(x),
\end{equation*}
and therefore
\begin{equation*}
    \Gamma^{\sigma^{\tau}}_{p,2}(\tau f,\tau f)(x)=\Gamma^{\sigma}_{p,2}(f, f)(x).
\end{equation*}
Then, the proposition follows directly by definition.
\end{proof}
\begin{prop}
    For $p\geq2$, a signed graph $(G,\sigma)$ satisfies the curvature-dimension inequality $CD_{p}^{\sigma}(K,N)$ at $x\in V$ for $K\in \mathbb{R}$ and $N\in(0,\infty]$ if and only if the curvature-dimension inequality (\ref{CDp condition}) holds for any $f:V\rightarrow\mathbb{R}$.
\end{prop}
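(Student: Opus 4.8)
The plan is to treat the two implications separately. One implication is immediate: if (\ref{CDp condition}) holds for every $f:V\to\mathbb{R}$, then in particular it holds for every $f$ satisfying $\sigma_{xy}f(y)-f(x)\neq 0$ at each $y\sim x$, which is exactly the requirement in Definition \ref{def:Cdp} for $CD_p^\sigma(K,N)$ at $x$. All the content therefore lies in the converse, which I would establish by a continuity-and-density argument exploiting the restriction $p\geq 2$.

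The decisive feature of the range $p\geq 2$ is that the scalar map $t\mapsto |t|^{p-2}$ is continuous on all of $\mathbb{R}$: for $p>2$ it vanishes at $t=0$, while for $p=2$ it is identically $1$ (under the convention $|0|^0=1$, which is precisely what makes $\mathscr{L}^\sigma_{2,f}$ coincide with the unsigned Laplacian $\Delta$). Consequently each ingredient of (\ref{CDp condition}) is a composition of continuous functions of the finitely many coordinates $\{f(z)\}_{z\in V}$, and hence continuous on all of $\mathbb{R}^V$ with no non-degeneracy hypothesis: this holds for $\Delta_p^\sigma f(x)$, for the vertex function $\Gamma_p^\sigma(f,f)$, for $\mathscr{L}^\sigma_{p,f}$ acting on it, and for $\Gamma_p^\sigma(f,\Delta_p^\sigma f)(x)$, hence for the left-hand side $\Gamma_{p,2}^\sigma(f,f)(x)$. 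The right-hand side is likewise continuous, since $t\mapsto t^{(2p-2)/p}$ is continuous on $[0,\infty)$ and $\Gamma_p^\sigma(f,f)(x)\geq 0$.

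To conclude, fix the vertex $x$ and set $U_x=\{f\in\mathbb{R}^V:\sigma_{xy}f(y)-f(x)\neq 0 \text{ for every } y\sim x\}$. This is the complement of finitely many hyperplanes $\{f:\sigma_{xy}f(y)=f(x)\}$, one per neighbor $y\sim x$, and is therefore open and dense in $\mathbb{R}^V$. Given an arbitrary $f:V\to\mathbb{R}$, choose a sequence $f_n\in U_x$ with $f_n\to f$ (coordinatewise, equivalently in any norm on the finite-dimensional space $\mathbb{R}^V$). By hypothesis $CD_p^\sigma(K,N)$ holds at $x$, so (\ref{CDp condition}) is valid for each $f_n$; letting $n\to\infty$ and invoking the continuity established above — the inequality being weak and both sides converging regardless of the sign of $K$ — yields (\ref{CDp condition}) for $f$ itself.

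I expect the only point requiring care to be the continuity of the second-order term $\Gamma_{p,2}^\sigma(f,f)(x)$, the most deeply nested expression: in $\mathscr{L}^\sigma_{p,f}(\Gamma_p^\sigma(f,f))(x)$ both the edge weights $|\sigma_{xy}f(y)-f(x)|^{p-2}$ and the differenced function $\Gamma_p^\sigma(f,f)$ vary with $f$, while $\Gamma_p^\sigma(f,\Delta_p^\sigma f)(x)$ reaches through $\Delta_p^\sigma f$ at the neighbors of $x$ to edge differences $\sigma_{yw}f(w)-f(y)$ one step further out. One need only check that for $p\geq 2$ every such weight and difference is a continuous function of the finite data $\{f(z)\}_{z\in V}$, so that their finite products and sums are again continuous; the sole candidate for a discontinuity, the weight $|t|^{p-2}$ at $t=0$, is harmless precisely because $p\geq 2$.
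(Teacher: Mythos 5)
Your argument is correct and is essentially the paper's own proof: the paper perturbs $f$ to $f_\varepsilon=f+\varepsilon\mathbf{1}_x$, which for all small $\varepsilon>0$ lands in your open dense set $U_x$, applies the inequality there, and lets $\varepsilon\to 0$, implicitly using exactly the continuity for $p\geq 2$ that you spell out. Your version merely replaces this explicit one-parameter perturbation by a general approximating sequence and makes the continuity check explicit; the substance is the same.
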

\begin{proof}
We only need show that  $CD_{p}^{\sigma}(K,N)$ at $x\in V$ implies (\ref{CDp condition}) for a function $f:V\to \mathbb{R}$ such that there exists $y\sim x$ such that $\sigma_{xy}f(y)-f(x)=0$. We perturb it to be $f_\varepsilon:=f+\varepsilon\mathbf{1}_x$, where $\mathbf{1}_x(y)=1$ if $y=x$ and $0$ otherwise. For any sufficiently small $\varepsilon>0$, we have $\sigma_{xy}f_\varepsilon(y)-f(x)\neq 0$ for any $y\sim x$. Then 
     $CD_{p}^{\sigma}(K,N)$ at $x\in V$ implies that
         \begin{equation*}
        \Gamma_{p,2}^{\sigma}(f_\varepsilon,f_\varepsilon)(x)\geq\frac{1}{N}(\Delta_{p}^{\sigma}f_\varepsilon(x))^{2}+K(\Gamma^{\sigma}_{p}(f_\varepsilon,f_\varepsilon)(x))^{\frac{2p-2}{p}}.
    \end{equation*}
    Letting $\epsilon \to 0$ yields (\ref{CDp condition}) for the function $f$.
\end{proof}



Now we are prepared to present our Lichnerowicz type eigenvalue estimate for the $p$-Laplacian $\Delta_p^\sigma$ of a signed graph.
\begin{thm}\label{thm:Lich}
   Let $(G,\sigma)$ be a finite connected signed graph satisfying $CD_{p}^{\sigma}(K,N)$ with $p\geq2$, $N>1$ and $K>0$. Then, we have for the first nonzero eigenvalue $\lambda_p^\sigma$ that
   \begin{equation*}
     \lambda_{p}^{\sigma}\geq\left(\frac{1}{\mathrm{vol}(G)}\right)^{\frac{p-2}{2}}\left(\frac{NK}{N-1}\right)^{\frac{p}{2}}.
   \end{equation*}
\end{thm}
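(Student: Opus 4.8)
The plan is to imitate the classical Lichnerowicz argument that yields the $p=2$ estimate $\lambda^\sigma\geq\frac{N}{N-1}K$: I would integrate the curvature-dimension inequality $CD_p^\sigma(K,N)$ against the vertex-degree measure $d_x$ and simplify using the eigenfunction equation. Let $f$ be an eigenfunction of $\Delta_p^\sigma$ to the first nonzero eigenvalue $\lambda_p^\sigma>0$, so that $\Delta_p^\sigma f=-\lambda_p^\sigma|f|^{p-2}f$. Since $p\geq2$, the weights $|\sigma_{xy}f(y)-f(x)|^{p-2}$ are defined at every edge, so the preceding Proposition guarantees that (\ref{CDp condition}) holds for $f$ at every vertex, and I may sum it with weights $d_x$ over all of $V$.

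The heart of the argument is two summation-by-parts identities. First, because the edge weight $w_{xy}:=|\sigma_{xy}f(y)-f(x)|^{p-2}$ is symmetric in $x,y$, antisymmetrization gives $\sum_{x\in V}\mathscr{L}_{p,f}^\sigma\varphi(x)\,d_x=\sum_{x}\sum_{y\sim x}w_{xy}(\varphi(y)-\varphi(x))=0$ for every $\varphi$; applied to $\varphi=\Gamma_p^\sigma(f,f)$ this yields $\sum_x\Gamma_{p,2}^\sigma(f,f)(x)\,d_x=-\sum_x\Gamma_p^\sigma(f,\Delta_p^\sigma f)(x)\,d_x$. Second, writing $a_{xy}:=|\sigma_{xy}f(y)-f(x)|^{p-2}(\sigma_{xy}f(y)-f(x))$ and using $a_{yx}=-\sigma_{xy}a_{xy}$ (which follows from $\sigma_{xy}f(x)-f(y)=-\sigma_{xy}(\sigma_{xy}f(y)-f(x))$ and the oddness of $t\mapsto|t|^{p-2}t$), a relabelling of the double sum produces the integration-by-parts formula $\sum_x\Gamma_p^\sigma(f,g)(x)\,d_x=-\sum_x g(x)\Delta_p^\sigma f(x)\,d_x$. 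Applying these with $g=f$ and $g=\Delta_p^\sigma f$ and inserting the eigenfunction equation, I would obtain
\[\sum_x\Gamma_{p,2}^\sigma(f,f)d_x=\sum_x(\Delta_p^\sigma f)^2 d_x=(\lambda_p^\sigma)^2\sum_x|f|^{2p-2}d_x,\qquad \sum_x\Gamma_p^\sigma(f,f)d_x=\lambda_p^\sigma\sum_x|f|^{p}d_x.\]
Summing (\ref{CDp condition}) against $d_x$ and substituting these values then reduces everything to $(\lambda_p^\sigma)^2(1-\tfrac1N)\sum_x|f|^{2p-2}d_x\geq K\sum_x(\Gamma_p^\sigma(f,f))^{\frac{2p-2}{p}}d_x$.

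It remains to estimate the curvature term from below, and here the hypothesis $p\geq2$ enters twice. Since $q:=\frac{2p-2}{p}=2-\frac2p\geq1$, the map $t\mapsto t^q$ is convex, so Jensen's inequality for the measure $d_x$ (of total mass $\mathrm{vol}(G)$) gives $\sum_x(\Gamma_p^\sigma(f,f))^q d_x\geq \mathrm{vol}(G)^{1-q}\big(\sum_x\Gamma_p^\sigma(f,f)d_x\big)^q=\mathrm{vol}(G)^{1-q}(\lambda_p^\sigma)^q\big(\sum_x|f|^p d_x\big)^q$. Next, setting $b_x=|f(x)|^p d_x$ and using $d_x\geq1$ together with $q\geq1$, one has $\sum_x|f|^{2p-2}d_x=\sum_x b_x^q d_x^{1-q}\leq\sum_x b_x^q\leq\big(\sum_x b_x\big)^q=\big(\sum_x|f|^p d_x\big)^q$, that is, the power-sum inequality $\big(\sum_x|f|^p d_x\big)^q\geq\sum_x|f|^{2p-2}d_x$. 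Combining these, cancelling the common positive factor $\sum_x|f|^{2p-2}d_x$ and dividing by $(\lambda_p^\sigma)^q$, I would be left with $(\lambda_p^\sigma)^{2/p}\geq\frac{N}{N-1}K\,\mathrm{vol}(G)^{(2-p)/p}$; raising to the power $p/2$ gives exactly the claimed bound.

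The main obstacle is organizing the two summation-by-parts identities correctly — in particular verifying the symmetry of $w_{xy}$ and the antisymmetry $a_{yx}=-\sigma_{xy}a_{xy}$, which are what make the $\mathscr{L}_{p,f}^\sigma$ term integrate to zero and produce the clean identity $\sum_x\Gamma_{p,2}^\sigma(f,f)d_x=\sum_x(\Delta_p^\sigma f)^2 d_x$. The second, more structural point is that the two convexity inequalities must point in compatible directions: Jensen lowers the curvature term to a power of $\sum_x|f|^p d_x$, and the power-sum inequality then trades that back for $\sum_x|f|^{2p-2}d_x$ so that the exact cancellation occurs. Both steps require $q\geq1$, which is precisely the restriction $p\geq2$, and the second also uses $d_x\geq1$; this explains why the statement is limited to $p\geq 2$.
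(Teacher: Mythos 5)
Your argument is correct and follows essentially the same route as the paper: the identity $\sum_x\Gamma_{p,2}^\sigma(f,f)\,d_x=(\lambda_p^\sigma)^2\sum_x|f|^{2p-2}d_x$ via summation by parts, then the curvature term handled by Jensen with exponent $q=\tfrac{2p-2}{p}$ over the measure $d_x$ (which is exactly the H\"older step in the paper, applied in the reverse direction), and finally the weighted power-sum inequality $\bigl(\sum_x|f|^pd_x\bigr)^{q}\geq\sum_x|f|^{2p-2}d_x$ using $d_x\geq1$, which is the paper's $p$-norm inequality. The only difference is organizational — you substitute everything into one chain and cancel, while the paper derives two separate estimates and combines them — and your exponent bookkeeping checks out.
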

\begin{proof}
  Let $f$ be an eigenfunction of the $p$-Laplacian $\Delta_p^\sigma$ with respect to the first nonzero eigenvalue $\lambda_{p}^{\sigma}$. Then, we calculate
  \begin{align*}
       &\sum_{x\in V}\Gamma_{2,p}^{\sigma}(f,f)(x)d_{x}\\
       =&\sum_{x\in V}\left(\frac{1}{2}\mathscr{L}_{p,f}^{\sigma}(\Gamma_{p}^{\sigma}(f,f))(x)-\Gamma_{p}^{\sigma}(f,\Delta_{p}^{\sigma}f)(x)\right)d_{x} \\
          =&\lambda_{p}^{\sigma}\sum_{x\in V}\Gamma_{p}^{\sigma}(f,|f|^{p-2}f)(x)d_{x}\\
         =&\frac{1}{2}\lambda_{p}^{\sigma}\sum_{x\in V}\sum_{y\sim x}|\sigma_{xy}f(y)-f(x)|^{p-2}\left(\sigma_{xy}f(y)-f(x)\right)\left(\sigma_{xy}|f(y)|^{p-2}f(y)-|f(x)|^{p-2}f(x)\right)\\
         =& \frac{1}{2}\lambda_{p}^{\sigma}\sum_{x\in V}\sum_{y\sim x}|\sigma_{xy}f(y)-f(x)|^{p-2}\left(|f(y)|^p-\sigma_{xy}|f(x)|^{p-2}f(x)f(y)\right.\\
         &\hspace{8.5cm}\left.-\sigma_{xy}|f(y)|^{p-2}f(y)f(x)+|f(x)|^p\right)\\
         =& -\lambda_{p}^{\sigma}\sum_{x\in V}|f(x)|^{p-2}f(x)\sum_{y\sim x}|\sigma_{xy}f(y)-f(x)|^{p-2}(\sigma_{xy}f(y)-f(x))\\
         =& -\lambda_{p}^{\sigma}\sum_{x\in V}|f(x)|^{p-2}f(x)\Delta_{p}^{\sigma}f(x)d_{x}\\
         =& (\lambda_{p}^{\sigma})^{2}\sum_{x\in V}|f(x)|^{2p-2}d_{x}.
  \end{align*}
  Applying curvature-dimension inequality $CD_p^\sigma(K,N)$, we derive
  \begin{equation*}
        (\lambda_{p}^{\sigma})^{2}\sum_{x\in V}|f(x)|^{2p-2}d_{x}\geq
         \frac{(\lambda_{p}^{\sigma})^{2}}{N}\sum_{x\in V}|f(x)|^{2p-2}d_{x}+K\sum_{x\in V}(\Gamma_{p}^{\sigma}(f,f)(x))^{\frac{2p-2}{p}}d_{x}.
  \end{equation*}
Rearranging the terms yields
 \begin{equation}\label{CD}
  \frac{N-1}{NK}(\lambda_{p}^{\sigma})^{2}\geq\frac{\sum_{x}\left(\frac{1}{2d_{x}}\sum_{y\sim x}|\sigma_{xy}f(y)-f(x)|^{p}\right)^{\frac{2p-2}{p}}d_{x}}{\sum_{x}|f(x)|^{2p-2}d_{x}}.
 \end{equation} 
 On the other hand, we apply H\"older inequality when $p\geq 2$ to obtain
  \begin{equation}\label{inequality:eigenvalue}
    \begin{split}
       2\lambda_{p}^{\sigma} &=\frac{\sum_{x}\sum_{y\sim x}|\sigma_{xy}f(y)-f(x)|^{p}}{\sum_{x}|f(x)|^{p}d_{x}} \\
         & \leq \frac{\left(\sum_{x}\left(\frac{1}{d_{x}}\sum_{y\sim x}|\sigma_{xy}f(y)-f(x)|^{p}\right)^{\frac{2p-2}{p}}d_{x}\right)^{\frac{p}{2p-2}}\mathrm{vol}(G)^{\frac{p-2}{2p-2}}}{\sum_{x}|f(x)|^{p}d_{x}}\\
         & \leq\left(\frac{\sum_{x}\left(\frac{1}{d_{x}}\sum_{y\sim x}|\sigma_{xy}f(y)-f(x)|^{p}\right)^{\frac{2p-2}{p}}d_{x}}{\sum_{x}|f(x)|^{2p-2}d_{x}}\right)^{\frac{p}{2p-2}} \mathrm{vol}(G)^{\frac{p-2}{2p-2}}.
    \end{split}
  \end{equation}
  In the last inequality above, we use the following $p$-norm inequality
  \begin{equation}\label{ineq:p-norm}   
  \left(\sum^{n}_{i=1}|a_{i}|^{p}d_i\right)^{\frac{1}{p}}\leq\left(\sum^{n}_{i=1}|a_{i}|^{q}d_i\right)^{\frac{1}{q}}
  \end{equation}
for $p\geq q>0$ and $d_{i}\geq1, 1\leq i\leq n$. Indeed, we can assume without loss of generality that $ \left(\sum^{n}_{i=1}|a_{i}|^{p}d_i\right)^{\frac{1}{p}}=1$. Since $d_i\geq 1$, we have $|a_i|\leq 1$ for each $1\leq i\leq n$. Therefore, $|a_i|^p\leq |a_i|^q$ for $p\geq q>0$. This leads to 
$1=\sum^{n}_{i=1}|a_{i}|^{p}d_i\leq \sum^{n}_{i=1}|a_{i}|^{q}d_i$, and hence $1=\left(\sum^{n}_{i=1}|a_{i}|^{p}d_i\right)^{\frac{1}{p}}\leq \left(\sum^{n}_{i=1}|a_{i}|^{q}d_i\right)^{\frac{1}{q}}$.

Putting (\ref{CD}) and (\ref{inequality:eigenvalue}) together yields
  \begin{equation*}
    (2\lambda_{p}^{\sigma})^{\frac{2p-2}{p}}\cdot \mathrm{vol}(G)^{\frac{2-p}{p}}\leq\frac{\sum_{x}\left(\frac{1}{d_{x}}\sum_{y\sim x}|\sigma_{xy}f(y)-f(x)|^{p}\right)^{\frac{2p-2}{p}}d_{x}}{\sum_{x}|f(x)|^{2p-2}d_{x}}\leq 2^{\frac{2p-2}{p}}\frac{N-1}{NK}(\lambda_p^\sigma)^2.
  \end{equation*}
This completes the proof.
\end{proof} 
We conclude with a remark explaining that our result covers the Lichnerowicz type estimate for graph $p$-Laplacian by Wang and Huang \cite{Wang 21}.
\begin{rem} We remark that the curvature term in (\ref{CDp condition}) is different from the curvature term in \cite[Definition 2.4 and (3.2)]{Wang 21}. For a graph $G$ (with all-positive sign) satisfying a slightly different curvature dimension condition $CD_p(K_{WH},N)$ with $p\geq 2$, Wang and Huang show the first nonzero eigenvalue $\lambda_p$ satisfies 
   \begin{equation}\label{eq:HW}
     \lambda_{p}\geq\left(\frac{2}{\mathrm{vol}(G)}\right)^{\frac{p-2}{2}}\left(\frac{NK_{WH}}{N-1}\right)^{\frac{p}{2}},
   \end{equation}
   where $CD_p(K_{WH},N)$ is defined as in Definition \ref{def:Cdp} (in case $\sigma$ is all-positive) via replacing (\ref{CDp condition}) by
   \[        \Gamma_{p,2}(f,f)(x)\geq\frac{1}{N}(\Delta_{p}f(x))^{2}+\frac{K_{WH}}{2}|\nabla f|^{2p-2}(x).\]
    In the above, the curvature term reads as 
   \[\frac{K_{WH}}{2}|\nabla f|^{2p-2}(x):=\frac{K_{WH}}{2d_{x}}\sum_{y\sim x}|\sigma_{xy}f(y)-f(x)|^{2p-2}.\]
   For $p\geq 2$, we derive by H\"older inequality for $p\geq 2$ that
   \[\left(\frac{1}{d_{x}}\sum_{y\sim x}|\sigma_{xy}f(y)-f(x)|^{p}\right)^{\frac{1}{p}}\leq\left(\frac{1}{d_{x}}\sum_{y\sim x}|\sigma_{xy}f(y)-f(x)|^{2p-2}\right)^{\frac{1}{2p-2}}.\]
 Hence, we obtain
 \[\frac{K_{WH}}{2}|\nabla f|^{2p-2}(x)\geq 2^{\frac{p-2}{p}}K_{WH}(\Gamma_p(f,f)(x))^{\frac{2p-2}{p}}.\]
 That is, Wang and Huang's $CD_p(K_{WH},N)$ with $p\geq 2$ implies our $CD_p^\sigma(2^{\frac{p-2}{p}}K_{WH},N)$ with $p\geq 2$ and all-positive $\sigma$. Therefore, Wang and Huang's estimate (\ref{eq:HW}) is a consequence of Theorem \ref{thm:Lich}.
\end{rem}

\vspace{.3cm}

{\bf{Acknowledgements:}} This work is supported by the National Key R and D Program of China 2020YFA0713100 and the National Natural Science Foundation of China No. 12031017. We are very grateful to Chuanyuan Ge, Chunyang Hu and Wenbo Li for their inspiring discussions, especially on various explicit constructions of signed graphs with particular spectral and curvature properties.

\appendix
\section{Curvature of hypercube graphs with one negative edge}\label{section:Appendix}

In this Appendix, we calculate the curvature of a hypercube graph $Q^n$ with a sign $\sigma$ for which $(Q^n,\sigma)$ has precisely one negative edge.

We first recall the curvature matrix method for calculating the curvature of a general signed graph $(G,\sigma)$ developed in \cite{HL22}, which is a nontrivial generalization of results for unsigned graphs in \cite{CKLP22,CLP20,Sic20,Sic21}. The method works for graphs with general vertex and edge weights and general orthogonal or unitary group signs. For the readers' convenience, we survey the method briefly in the setting of the current paper. 

Let $(G,\sigma)$ be a signed graph. We define the transition rate $p_{xy}$ from $x$ to $y$ as  \[p_{xy}:=\begin{cases}
\frac{1}{d_x} & \text{if $x\sim y$,} \\
0& \text{otherwise.}
\end{cases}\] Next, we consider the curvature at a vertex $x$ of $(G,\sigma)$.
Let us denote the neighbors of $x$ by $y_1,\ldots, y_m$ with $m=d_x$, and the vertices with distance $2$ to $x$ by $z_1,\ldots, z_\ell$. We define an $(m+1)\times (m+1)$ symmetric matrix $Q(x)$, whose rows and columns are indexed by $x,y_1,\ldots, y_m$, as below (see \cite[(A.10)-(A.13)]{HL22}):
\begin{align}
&4Q(x)_{x,x}=3\sum_{i=1}^mp_{xy_{i}}p_{y_{i}x}+1-\sum_{k=1}^\ell\frac{(\sum_{i=1}^mp_{xy_{i}}p_{y_{i}z_{k}}\sigma_{xy_{i}}\sigma_{y_{i}z_{k}})^2}{\sum_{i=1}^mp_{xy_{i}}p_{y_{i}z_{k}}},\label{eq:Q10}\\
&4Q(x)_{x,y_i}=\sum_{j=1,j\neq i}^mp_{xy_j}p_{y_jy_i}\sigma_{xy_j}\sigma_{y_jy_i}-2\left(p_{y_ix}+1\right)p_{xy_i}\sigma_{xy_i}\notag\\
&\hspace{3.5cm}+2\sum_{k=1}^\ell\frac{(\sum_{j=1}^mp_{xy_{j}}p_{y_{j}z_{k}}\sigma_{xy_{j}}\sigma_{y_{j}z_{k}})p_{xy_{i}}p_{y_{i}z_{k}}\sigma_{y_{i}z_{k}}}{\sum_{j=1}^mp_{xy_{j}}p_{y_{j}z_{k}}},\,\forall\,i,\\
&4Q(x)_{y_{i},y_{i}}=\sum_{j=1,j\neq i}^m p_{xy_j}p_{y_jy_{i}}+2\left(p_{xy_i}+1\right)p_{xy_i}-4\sum_{k=1}^\ell\frac{p_{xy_{i}}^2p_{y_{i}z_{k}}^2}{\sum_{j=1}^mp_{xy_{j}}p_{y_{j}z_{k}}},\,\forall \,i,\\
&4Q(x)_{y_{i},y_j}=-2(p_{xy_{i}}p_{y_iy_{j}}+p_{xy_j}p_{y_jy_i})\sigma_{y_{i}y_{j}}+2p_{xy_{i}}p_{xy_{j}}\sigma_{xy_{i}}\sigma_{xy_{j}}\notag\\
&\hspace{5cm}-4\sum_{k=1}^\ell\frac{p_{xy_{i}}p_{y_{i}z_{k}}p_{xy_j}p_{y_jz_{k}}\sigma_{y_{i}z_{k}}\sigma_{y_jz_{k}}}{\sum_{h=1}^mp_{xy_{h}}p_{y_{h}z_{k}}},\,\forall\,i,\,\forall\,j\neq i.\label{eq:Q13}
\end{align}
We define another $(m+1)\times (m+1)$ matrix $B_0$ as (see \cite[(3.33)]{HL22})
\begin{equation}
B_0:=
\begin{pmatrix}\label{eq:canonicalB}
    1 & \sigma_{xy_1}                            & \cdots  & \sigma_{xy_m} \\
          & \frac{1}{\sqrt{p_{xy_{1}}}} &        &       \\
          &                                  & \ddots &       \\
          &                                  &        & \frac{1}{\sqrt{p_{xy_{m}}}}
\end{pmatrix},
\end{equation}
and denote the product $2B_0Q(x)B_0^\top$ by a block matrix as follows:
\begin{equation}\label{eq:BQmatrix}
    2B_0Q(x)B_0^\top=\begin{pmatrix}
        a & \omega^\top \\
        \omega & (2BQ(x)B^\top)_{\hat{1}}
    \end{pmatrix}.
\end{equation}
Here the notation $(2BQ(x)B^\top)_{\hat{1}}$ stands for the $m\times m$ matrix obtained from $2B_0Q(x)B_0^\top$ by removing the first column and the first row. Notice that we always have $a\geq 0$ \cite[Proposition A.1]{HL22}. The curvature matrix at $x$ in $(G,\sigma)$ is then defined to be \cite[Definition 3.1]{HL22}
\begin{equation}\label{eq:curvatuer_matrix}
    A_\infty(G,\sigma, x):=(2BQ(x)B^\top)_{\hat{1}}-\omega a^{-1}\omega^\top,
\end{equation}
where we use the convention $0^{-1}=0$. Moreover, we define an $m$-dimensional vector $\mathbf{v}_0$ as
\begin{equation}\label{eq:canonicalv0}
\mathbf{v}_0:=\begin{pmatrix}
  \sqrt{p_{xy_1}}\sigma_{xy_1} \\\vdots\\ \sqrt{p_{xy_m}}\sigma_{xy_m}
\end{pmatrix},
\end{equation}
and the $N$-dimensional curvature matrix as 
\begin{equation}
       A_N(G,\sigma, x):= A_\infty(G,\sigma, x)-\frac{1}{N}\mathbf{v}_0\mathbf{v}_0^\top.
\end{equation}

Now we are prepared to formulate the curvature at a vertex as the smallest eigenvalue of the corresponding curvature matrix. 
\begin{thm}[{\cite[Theorem 3.2]{HL22}}]\label{thm:HL}
    Let $x$ be a vertex in a signed graph $(G,\sigma)$. For any $N\in (0,\infty]$, we define $K_{G,\sigma,x}(N)$ to be the largest $K$ such that $CD^\sigma(K,N)$ holds at $x$. Then we have
    \[K_{G,\sigma,x}(N)=\lambda_{\min}(A_N(G,\sigma, x)),\]
where $\lambda_{\min}(\cdot)$ stands for the minimal eigenvalue of the matrix.
\end{thm}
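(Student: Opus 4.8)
The plan is to recast "$CD^\sigma(K,N)$ holds at $x$" as a single matrix inequality and then read off the optimal $K$ as a minimal eigenvalue. The key structural fact is that $\Gamma_2^\sigma(f)(x)$, $(\Delta^\sigma f)^2(x)$ and $\Gamma^\sigma(f)(x)$ all depend only on the restriction of $f$ to the ball $B_2(x)$, and each is a quadratic form in the finitely many values $f(x),f(y_1),\ldots,f(y_m),f(z_1),\ldots,f(z_\ell)$. Hence $CD^\sigma(K,N)$ at $x$ is equivalent to nonnegativity, over all such values, of
\[
\mathcal{Q}_{K,N}(f):=\Gamma_2^\sigma(f)(x)-\tfrac{1}{N}(\Delta^\sigma f)^2(x)-K\,\Gamma^\sigma(f)(x).
\]

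First I would eliminate the distance-two values. Only $\Gamma_2^\sigma$ contains the $f(z_k)$ (through the $\Delta\Gamma^\sigma$ term), and as a function of each $f(z_k)$ it is a convex quadratic. Since the inequality must hold for all $f$, I would minimize $\mathcal{Q}_{K,N}$ over the free values $f(z_k)$; this is a completion of squares whose Schur-complement remainder is exactly the term $\sum_k(\cdots)^2/\sum_i p_{xy_i}p_{y_iz_k}$ appearing in the entries (\ref{eq:Q10})--(\ref{eq:Q13}). The outcome is that the worst case of $\Gamma_2^\sigma$ over the distance-two directions is the quadratic form on the reduced vector $(f(x),f(y_1),\ldots,f(y_m))$ encoded by the matrix $Q(x)$.

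Next I would compare with $\Gamma^\sigma$. Writing $u_i:=\sigma_{xy_i}f(y_i)-f(x)$, one has $\Gamma^\sigma(f)(x)=\tfrac12\sum_i p_{xy_i}u_i^2$ and $\Delta^\sigma f(x)=\sum_i p_{xy_i}u_i$, so both $\Gamma^\sigma$ and $(\Delta^\sigma f)^2$ vanish on the one-dimensional direction $\mathbf{1}_\sigma=(1,\sigma_{xy_1},\ldots,\sigma_{xy_m})$ on which all $u_i=0$. The congruence by $B_0$ in (\ref{eq:canonicalB}), with its normalization by the factors $p_{xy_i}^{-1/2}$, together with the Schur complement over the resulting first coordinate (whose diagonal entry is $a$ and which is the direction associated with $\mathbf{1}_\sigma$), is arranged so that $\Gamma^\sigma$ is carried to the standard Euclidean form on the remaining $m$ gradient coordinates while $\Gamma_2^\sigma$ is carried to $A_\infty(G,\sigma,x)=(2BQ(x)B^\top)_{\hat{1}}-\omega a^{-1}\omega^\top$. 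Requiring $\mathcal{Q}_{K,N}\ge 0$ along the $\mathbf{1}_\sigma$-direction for all $f$ forces $a\ge0$ (this is \cite[Proposition A.1]{HL22}), and minimizing over that free direction yields the Schur-complement term $-\omega a^{-1}\omega^\top$, with the degenerate case $a=0$ covered by the convention $0^{-1}=0$ after checking that $\omega$ lies in the range of $a$ so that the form stays bounded below. Finally $(\Delta^\sigma f)^2$ is carried to $\langle\mathbf{v}_0,\cdot\rangle^2$ with $\mathbf{v}_0$ as in (\ref{eq:canonicalv0}), so the term $-\tfrac1N(\Delta^\sigma f)^2$ contributes $-\tfrac1N\mathbf{v}_0\mathbf{v}_0^\top$, producing $A_N(G,\sigma,x)$.

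Assembling these reductions, $CD^\sigma(K,N)$ at $x$ becomes $\langle u,A_N(G,\sigma,x)\,u\rangle\ge K\langle u,u\rangle$ for all gradient vectors $u$, since $\Gamma^\sigma$ is the identity form in these coordinates. This holds precisely when $K\le\lambda_{\min}(A_N(G,\sigma,x))$, so the largest admissible $K$ equals $\lambda_{\min}(A_N(G,\sigma,x))$, as claimed. I expect the main obstacle to be the bookkeeping rather than any conceptual difficulty: one must verify that the two successive Schur complements --- first over the $f(z_k)$, then over the $\mathbf{1}_\sigma$-direction --- reproduce exactly the entries (\ref{eq:Q10})--(\ref{eq:Q13}) and the block structure (\ref{eq:BQmatrix}), and that every edge sign $\sigma$ is transported correctly through the congruence into the off-diagonal entries of $Q(x)$ and into $\mathbf{v}_0$. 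Correctly treating the degenerate directions (when $a=0$, or when the reduced form is only positive semidefinite) is the other delicate point.
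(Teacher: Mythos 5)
The paper does not prove this statement: it is quoted verbatim from \cite[Theorem 3.2]{HL22} and used as a black box in the appendix, so there is no internal proof to compare against. Your reconstruction follows exactly the strategy of that cited reference (and of its unsigned predecessor \cite{CKLP22}): view $CD^\sigma(K,N)$ at $x$ as nonnegativity of a quadratic form on the $2$-ball, Schur-complement out the distance-two variables to obtain $Q(x)$, pass to gradient coordinates via the congruence by $B_0$ so that $\Gamma^\sigma$ becomes the Euclidean form and $\Delta^\sigma f$ becomes $\langle\mathbf{v}_0,\cdot\rangle$, Schur-complement out the remaining free direction (with $a\geq 0$ from \cite[Proposition A.1]{HL22} and the degenerate case $a=0$ handled separately), and read off the optimal $K$ as $\lambda_{\min}(A_N(G,\sigma,x))$. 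The only caution is the bookkeeping you already flag yourself: the overall normalization of $Q(x)$ and the factor in front of $\mathbf{v}_0\mathbf{v}_0^\top$ must be checked against the conventions of \cite{HL22}, since the present paper only transcribes the matrix entries without restating which multiple of $\Gamma_2^\sigma$ the form $Q(x)$ encodes.
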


Applying Theorem \ref{thm:HL}, we show the following result on the curvature of a signed hypercube graph.

\begin{prop}
   Let $(Q^n,\sigma)$ be the $n$-dimensional hypercube graph for some $n\in \mathbb{N}$ with the sign $\sigma$ which has precisely one negative edge. Then  the signed hypercube graph $(Q^n,\sigma)$ satisfies $CD^\sigma(0,\infty)$ if and only if $n=1,2$. 
\end{prop}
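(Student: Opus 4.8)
The plan is to invoke the curvature-matrix criterion of Theorem \ref{thm:HL}: $(Q^n,\sigma)$ satisfies $CD^\sigma(0,\infty)$ if and only if $\lambda_{\min}(A_\infty(Q^n,\sigma,x))\geq 0$ at every vertex $x$. I place coordinates so that the vertices of $Q^n$ are the elements of $\{0,1\}^n$, the unique negative edge is $\{u,v\}$ with $u=0$ and $v=e_1$, and all other edges are positive. Since the hypercube is triangle-free, no two neighbors of a vertex are adjacent, each vertex at distance $2$ from $x$ is joined to exactly two neighbors of $x$, and all transition rates equal $1/n$. Hence the only signs entering the curvature matrix at $x$ are those of the edges incident to $x$ and of the second-layer edges joining a neighbor of $x$ to a distance-$2$ vertex. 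In particular the negative edge influences the curvature at $x$ exactly when $x\in\{u,v\}$ or $x$ is adjacent to precisely one of $u,v$; at every other vertex the local sign pattern is all-positive, so the curvature equals that of the unsigned hypercube $Q^n$ and is therefore at least $2/n>0$.

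The heart of the argument is the computation of $A_\infty(Q^n,\sigma,u)$ at an endpoint $u$ of the negative edge. Using the formulas (\ref{eq:Q10})--(\ref{eq:Q13}) for $Q(u)$, the matrix $B_0$ in (\ref{eq:canonicalB}), and the definition (\ref{eq:curvatuer_matrix}), I intend to carry out the contraction and to record that, with $s=(\sigma_{ue_1},\dots,\sigma_{ue_n})=(-1,1,\dots,1)$, the curvature matrix takes the form $A_\infty(Q^n,\sigma,u)=\frac{2}{n}\widetilde{M}-\frac{1}{n(n-1)}ww^\top$, where $\widetilde{M}=I_n-e_1\mathbf{1}'^\top-\mathbf{1}'e_1^\top$ with $\mathbf{1}'=\sum_{j\geq 2}e_j$, and $w=-(n-1)e_1+\mathbf{1}'$. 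The decisive structural feature is that both $\widetilde{M}$ and $ww^\top$ preserve the orthogonal splitting $\mathbb{R}^n=\mathrm{span}\{e_1,\mathbf{1}'\}\oplus U$ with $U=\{y:y_1=0,\ \sum_{j\geq2}y_j=0\}$: on $U$ the matrix acts as $\tfrac{2}{n}I$, while on the two-dimensional space $\mathrm{span}\{e_1,\mathbf{1}'\}$ it reduces, in the orthonormal basis $(e_1,\mathbf{1}'/\sqrt{n-1})$, to
\begin{equation*}
\frac{1}{n}\begin{pmatrix} 3-n & -\sqrt{n-1}\\ -\sqrt{n-1} & 1\end{pmatrix}.
\end{equation*}
This block has trace $(4-n)/n$ and determinant $(4-2n)/n^2$, so it is positive semidefinite precisely when $n\leq 2$; consequently $\lambda_{\min}(A_\infty(Q^n,\sigma,u))\geq 0$ if and only if $n\leq 2$.

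It remains to assemble the equivalence. For $n=1$ the graph $Q^1=K_2$ has no cycle, hence $(Q^1,\sigma)$ is balanced and $CD^\sigma(0,\infty)$ holds. For $n\geq 3$ the endpoint computation already exhibits a vertex at which the curvature matrix fails to be positive semidefinite (its decisive $2\times 2$ block has negative determinant $4-2n$), so $(Q^n,\sigma)$ does not satisfy $CD^\sigma(0,\infty)$. For $n=2$, where $Q^2=C_4$, the determinant $4-2n$ vanishes and the trace $(4-n)/n$ is positive, so the block is positive semidefinite with smallest eigenvalue $0$; thus the two endpoints have nonnegative curvature. Moreover, switching at either endpoint moves the single negative edge to an edge incident to each of the two remaining vertices, so by the switching invariance of the curvature dimension condition every vertex of $C_4$ shares the (nonnegative) endpoint curvature. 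Hence $(Q^n,\sigma)$ satisfies $CD^\sigma(0,\infty)$ exactly for $n\in\{1,2\}$. The main obstacle will be the sign-sensitive bookkeeping required to evaluate the entries of $Q(u)$ and to push the contraction in (\ref{eq:curvatuer_matrix}) through to the clean rank-one-perturbed form above; once the invariant splitting is spotted, all the relevant information is concentrated in the single determinant $4-2n$.
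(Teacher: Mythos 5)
Your proposal is correct and follows essentially the same route as the paper: both apply the curvature-matrix criterion of Theorem \ref{thm:HL} at an endpoint of the negative edge, arrive at the same matrix $A_\infty(Q^n,\sigma,x)$ (your rank-one-perturbed form $\frac{2}{n}\widetilde M-\frac{1}{n(n-1)}ww^\top$ agrees entry-by-entry with the paper's block matrix, and your $2\times 2$ block yields exactly the paper's eigenvalues $\frac{2-n}{n}$ and $\frac{2}{n}$), and both finish the $n=2$ case by switching invariance. The only differences are presentational: you diagonalize via an invariant splitting and a trace/determinant test where the paper states the spectrum directly, and you additionally note that vertices far from the negative edge retain the unsigned curvature $2/n$, which the paper does not need.
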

\begin{proof}
Let $x$ be a vertex in $(Q^n,\sigma)$ with neighbors $y_1,\ldots,y_n$ such that 
    \[\sigma_{xy_1}=-1, \sigma_{xy_2}=\cdots=\sigma_{xy_n}=+1.\]
By definition of hypercube graphs, every vertex $z_k$ with $d(x,z_k)=2$, if exists,  has exactly two neighbors in $\{y_1,\ldots,y_n\}$. Hence, we have
\[\sum_{j=1}^np_{xy_j}p_{y_jz_k}=\frac{2}{n^2}, \,\,\text{and}\,\,\sum_{j=1}^mp_{xy_j}p_{y_jz_k}\sigma_{xy_j}\sigma_{y_jz_k}=\begin{cases}
0 & \text{if $z_k\sim y_1$,} \\
\frac{2}{n^2}& \text{otherwise.}
\end{cases}\]
Furthermore, we derive from (\ref{eq:Q10})-(\ref{eq:Q13}) that
\begin{equation*}
    4Q(x)=\frac{2}{n^2}\begin{pmatrix}
    3n-1 & n+1 & -3& \cdots  & -3\\
       n+1   & 2 &   -2  & \cdots   & -2      \\
        -3  &  -2 & 2 & \cdots &   0    \\
        \vdots & \vdots & \vdots & \ddots & \vdots\\
          -3& -2& 0  & \cdots       & 2
\end{pmatrix}.
\end{equation*}
By (\ref{eq:canonicalB}), we have
\begin{equation*}
B_0:=
\begin{pmatrix}
    1 & -1 & 1  & \cdots  & 1 \\
      0    & \sqrt{n} & 0 & \cdots & 0          \\
      0 & 0 & \sqrt{n} &\cdots & 0\\
      \vdots    &  \vdots & \vdots                                 & \ddots &     \vdots  \\
       0   &     0 & 0                             &   \cdots     & \sqrt{n}
\end{pmatrix}.
\end{equation*}
A direct calculation yields
\begin{equation*}
    2B_0Q(x)B_0^\top=\frac{1}{n^2}\begin{pmatrix}
    n-1 & -(n-1)\sqrt{n} & \sqrt{n}& \cdots  & \sqrt{n}\\
       -(n-1)\sqrt{n}  & 2n &   -2n  & \cdots   & -2n    \\
        \sqrt{n}  &  -2n & 2n & \cdots &   0    \\
        \vdots & \vdots & \vdots & \ddots & \vdots\\
         \sqrt{n} & -2n& 0  & \cdots       & 2n
\end{pmatrix}.
\end{equation*}
When $n=1$, we have $a:=\frac{n-1}{n^2}=0$, and hence derive from (\ref{eq:curvatuer_matrix}) the $1\times 1$ curvature matrix
$$  A_{\infty}(Q^1,\sigma, x)=\begin{pmatrix}
    2
\end{pmatrix}.$$
Applying Theorem \ref{thm:HL}, we have 
$K_{Q^1,\sigma,x}=2$.
That is, the signed graph $(Q^1,\sigma)$ satisfies $CD^\sigma(2,\infty)$.

Noticing that $a:=\frac{n-1}{n^2}>0$ for any $n\geq 2$, we obtain by (\ref{eq:curvatuer_matrix}) the $n\times n$ curvature matrix as
\begin{align*}
    A_{\infty}(Q^n,\sigma, x)=&\frac{1}{n}\begin{pmatrix}
       2 &   -2  & \cdots   & -2    \\
         -2 & 2 & \cdots &   0    \\
      \vdots & \vdots & \ddots & \vdots\\
          -2& 0  & \cdots       & 2
\end{pmatrix}-\omega a^{-1}\omega^\top,
\end{align*}
with $\omega^\top:=\frac{1}{n^2}\begin{pmatrix}-(n-1)\sqrt{n} & \sqrt{n}& \cdots  & \sqrt{n}\end{pmatrix}$. Therefore, we conclude that 
\begin{align*}
  A_{\infty}(Q^n,\sigma, x)= \frac{1}{n}\begin{pmatrix}
    3-n & -\mathbf{1}_{n-1}^\top \\
        -\mathbf{1}_{n-1} & 2I_{n-1}-\frac{1}{n-1}J_{n-1}
\end{pmatrix},
\end{align*}
where $I_{n-1}$ is the $(n-1)\times (n-1)$ identity matrix, $J_{n-1}$ is the $(n-1)\times (n-1)$ all-one matrix, and $\mathbf{1}_{n-1}$ is the all-one $(n-1)$-dimensional vector. The eigenvalues of $A_{\infty}(Q^n,\sigma, x)$ are $\frac{2-n}{n}$ with multiplicity $1$ and $\frac{2}{n}$ with multiplicity $n-1$. Therefore, Theorem \ref{thm:HL} tells that
\[K_{Q^n,\sigma,x}=\frac{2-n}{n}, \,\,\text{for}\,\,n\geq 2.\]
When $n=2$, we have $K_{Q^2,\sigma,x}=0$. By switching invariance, this is true for any vertex in $(Q^2,\sigma)$. Therefore, $(Q^2,\sigma)$ satisfies $CD^\sigma(0,\infty)$. 
In contrast, $(Q^n,\sigma),\,n\geq 3$ does not satisfy $CD^\sigma(0,\infty)$ since $\frac{2-n}{n}<0$.
\end{proof}


\begin{thebibliography}{99}
\bibitem{Aldous87} D. Aldous, \textit{On the Markov chain simulation method for uniform combinatorial distributions and simulated annealing}. Probab. Engrg. Inform. Sci. {\bfseries 1} (1987), 33-46. 

\bibitem{AM85} N. Alon and V. D. Milman,
\textit{$\lambda_1$, isoperimetric inequalities for graphs, and superconcentrators}. J. Combin. Theory Ser. B {\bfseries 38} (1985), no.1, 73-88.

\bibitem{AtayLiu} F. M. Atay and S. Liu, \textit{Cheeger constants, structural balance, and spectral clustering analysis for signed graphs}. Discrete Math. {\bfseries 343} (2020), no. 1, 111616.

\bibitem{BS91}  L. Babai and M. Szegedy, \textit{Local expansion of symmetrical graphs}. Comb. Probab.  Comput. {\bfseries 1} (1991), 1-12.

\bibitem{Chung89} F. R. Chung, \textit{Diameters and eigenvalues}. J. Amer. Math. Soc. {\bfseries 2} (1989), no.2, 187-196.

\bibitem{Chung97} F. R. Chung, \textit{Spectral graph theory}. CBMS Regional Conf. Ser. in Math., {\bfseries 92}, Published for the Conference Board of the Mathematical Sciences, Washington, DC; by the American Mathematical Society, Providence, RI, 1997. xii+207 pp.

\bibitem{Chung06} F. R. Chung, \textit{The diameter and Laplacian eigenvalues of directed graphs}. Electron. J. Combin. {\bfseries 13} (2006), no.1, Note 4, 6 pp.

\bibitem{CFM94} F. R. Chung, V. Faber and T. A. Manteuffel, \textit{An upper bound on the diameter of a graph from eigenvalues associated with its Laplacian}. SIAM J. Discrete Math. {\bfseries 7} (1994), no.3, 443-457.

\bibitem{CGY96}F. R. Chung, A. Grigor'yan and S.-T. Yau, 
\textit{Upper bounds for eigenvalues of the discrete and continuous Laplace operators}. Adv. Math. {\bfseries 117} (1996), no.2, 165-178.

\bibitem{CGY97} F. R. Chung, A. Grigor'yan and S.-T. Yau, \textit{Eigenvalues and diameters for manifolds and graphs}. Tsing Hua lectures on geometry \& analysis (Hsinchu, 1990-1991), 79-105,
International Press, Cambridge, MA, 1997

 \bibitem{Chung 14}F. R. Chung, Y. Lin and S. T.  Yau, \textit{Harnack inequalities for graphs with non-negative Ricci curvature}. J. Math. Anal. Appl. {\bfseries 415} (2014), no. 1, 25-32.

\bibitem{ChungYau94} F. R. Chung and S. T. Yau, \textit{A Harnack inequality for homogeneous graphs and subgraphs}. Comm. Anal. Geom. {\bfseries 2} (1994), 628-639.

\bibitem{ChungYau96} F. R. Chung and S. T. Yau, \textit{Logarithmic Harnack inequalities}. Math. Res. Lett. {\bfseries 3} (1996), 793-812.

\bibitem{CKLP22} D. Cushing, S. Kamtue, S. Liu and N. Peyerimhoff, \textit{Bakry-{\'E}mery curvature on graphs as an eigenvalue problem}. Calc. Var. Partial Differential Equations {\bfseries 61} (2022), no. 2, Paper No. 62.

\bibitem{CKL+22} D. Cushing, R. Kangaslampi, V. Lipi\"{a}inen, S. Liu and G. W. Stagg, 
\textit{The graph curvature calculator and the curvatures of cubic graphs}. Exp. Math. {\bfseries 31} (2022), no. 2, 583-595.

\bibitem{CLP20} D. Cushing, S. Liu and N. Peyerimhoff, \textit{Bakry-\'Emery curvature functions of graphs}. Canad. J. Math. {\bfseries 72} (2020), no. 1, 89-143.

 \bibitem{DS91} P. Diaconis and D. Stroock, \textit{Geometric bounds for eigenvalues of Markov chains}. Ann. Appl. Probab. {\bfseries 1} (1991), no. 1. 36-61.

 \bibitem{DM19} C. Dru\c{t}u and J. M. Mackay, \textit{Random groups, random graphs and eigenvalues of {$p$}-{L}aplacians}. Adv. Math. {\bfseries 341} (2019), 188-254.

\bibitem{Elw91} K. D. Elworthy, \textit{Manifolds and graphs with mostly positive curvatures}. In Stochastic analysis and applications (Lisbon, 1989), volume {\bfseries 26} of Progr. Probab, 96-110. Birkh\"auser Boston, MA, 1991.



 \bibitem{Ge 23} C. Ge and S. Liu, \textit{Symmetric matrices, signed graphs, and nodal domain theorems}. Calc. Var. Partial Differential Equations. {\bfseries 62} (2023), no. 4, Paper No. 137, 30 pp.
 
 \bibitem{GLZ23} C. Ge, S. Liu and D. Zhang, \textit{Nodal domain theorems for p-Laplacians on signed graphs}. J. Spectr. Theory {\bfseries 13} (2023), no. 3, 937-989.

 \bibitem{Ge 2023} C. Ge, S. Liu and D. Zhang, \textit{New graph invariants based on $p$-Laplacian eigenvalues}. arXiv:2310.08189.

\bibitem{Gournay16} A. Gournay, \textit{An isoperimetric constant for signed graphs}. Expo. Math. {\bfseries 34} (2016), no. 3, 339-351.

\bibitem{Gri18} A. Grigor'yan, \textit{Introduction to analysis on graphs}. Univ. Lecture Ser., {\bfseries 71},
American Mathematical Society, Providence, RI, 2018. viii+150 pp.


 
 \bibitem{Harary} F. Harary, \textit{On the notion of balance of a signed graph}. Michigan Math. J.  {\bfseries 2} (1953/54),
143-146.

\bibitem{HT01} C. Houdr\'{e} and P. Tetali, \textit{Concentration of measure for products of Markov kernels and graph products via functional inequalities}. Combin. Probab. Comput. {\bfseries 10} (2001), no.1, 1-28.

\bibitem{HL22} C. Hu and S. Liu, \textit{Discrete Bakry-\'Emery curvature tensors and matrices of connection graphs}. arXiv:2209.10762, 2022.

\bibitem{Huang19} H. Huang,  \textit{Induced subgraphs of hypercubes and a proof of the sensitivity conjecture}.
Ann. of Math. (2) {\bfseries 190} (2019), no. 3, 949-955.

\bibitem{JMZ22} J. Jost, R. Mulas and D. Zhang, \textit{$p$-Laplace operators for oriented hypergraphs}. Vietnam J. Math. {\bfseries 50} (2022), no. 2, 323-358.

\bibitem{JTYZZ} Z. Jiang, J. Tidor, Y. Yao, S. Zhang and Y. Zhao, \textit{Spherical two-distance sets and eigenvalues of signed graphs}. Combinatorica {\bfseries 43} (2023), no. 2, 203-232. 

\bibitem{LLPP} C. Lange, S. Liu, N. Peyerimhoff and O. Post, \textit{Frustration index and Cheeger inequalities for discrete and continuous magnetic Laplacians}. Calc. Var. Partial Differential Equations {\bfseries 54} (2015), no. 4, 4165-4196.

\bibitem{LY80} P. Li and S.-T. Yau, \textit{Estimates of eigenvalues of a compact Riemannian manifold}. Geometry of the Laplace operator (Proc. Sympos. Pure Math., Univ. Hawaii, Honolulu, Hawaii, 1979), pp. 205-239, Proc. Sympos. Pure Math., {\bfseries XXXVI},
American Mathematical Society, Providence, RI, 1980.

\bibitem{LY10} Y. Lin and S.-T. Yau, \textit{Ricci curvature and eigenvalue estimate on locally finite graphs}. Math. Res. Lett. {\bfseries 17} (2010), no.2, 343-356.

 \bibitem{Liu 19}S. Liu, F. M\"unch and N. Peyerimhoff, \textit{Curvature and higher order Buser inequalities for the graph connection Laplacian}. SIAM J. Discrete Math. {\bfseries 33} (2019), no. 1, 257-305.

 \bibitem{LP18} S. Liu and N. Peyerimhoff, \textit{Eigenvalue ratios of non-negatively curved graphs}. Combin. Probab. Comput. {\bfseries 27} (2018), no.5, 829-850.

 \bibitem{LPS88} A. Lubotzky, R. Phillips and P. Sarnak,
\textit{Ramanujan graphs}. Combinatorica {\bfseries 8} (1988), no.3, 261-277.

\bibitem{MengLin24} L.-C. Meng and Y. Lin, \textit{Lower bounds for the first eigenvalue of Laplacian on graphs}. J. Math. Anal. Appl. {\bfseries 538} (2024), no. 1, 128369.

 \bibitem{M16} A. Mohammadian, \textit{Graphs and their real eigenvectors}. Linear Multilinear Algebra {\bfseries 64} (2016), no. 2, 136–142.

\bibitem{Mohar91} B. Mohar, \textit{Eigenvalues, diameter, and mean distance in graphs}. Graphs Combin. {\bfseries 7} (1991), no.1, 53-64.

\bibitem{M18} F. M\"unch, \textit{Li–Yau inequality on finite graphs via non-linear
curvature dimension conditions}. J. Math. Pures Appl. {\bfseries 120} (2018), 130-164

\bibitem{Sch99} M. Schmuckenschl\"ager, \textit{Curvature of nonlocal Markov generators}. In Convex geometric analysis (Berkeley, CA, 1996), volume {\bfseries 34} of Math. Sci. Res. Inst. Publ., 189-197, Cambridge Univ. Press, Cambridge, 1999.

\bibitem{Sic20} V. Siconolfi, \textit{Coxeter groups, graphs and {R}icci curvature}. S\'{e}m. Lothar. Combin.
{\bfseries 84B} (2020), Article 67.

\bibitem{Sic21} V. Siconolfi, \textit{Ricci curvature, graphs and eigenvalues}. Linear Algebra Appl. {\bfseries 620} (2021), 242-267.
 
 \bibitem{Wang 21}Y. Wang and H. Huang, \textit{Eigenvalue estimates of the p-Laplacian on finite graphs}. Differential Geom. Appl. {\bfseries 74} (2021), Paper no.101697, 11 pp.
 
\bibitem{Wang 24}X. Xu, W. Shen and L. Wang, \textit{The $CD_{p}$ curvature condition on a graph}. Front. Math {\bfseries 19} (2024), no.1, 181-192.

 \bibitem{Zaslavsky} T. Zaslavsky, \textit{Signed graphs}. Discrete Appl. Math. {\bfseries 4} (1982), no. 1, 47-74.

\bibitem{Zhang}  D. Zhang, \textit{Homological  eigenvalues of graph $p$-Laplacians}. J. Topol. Anal.  (2023)  https://doi.org/10.1142/S1793525323500346.

 \bibitem{ZY84} J. Q. Zhong and H. C. Yang, \textit{On the estimate of the first eigenvalue of a compact Riemannian manifold}. Sci. Sinica Ser. A {\bfseries 27} (1984), no.12, 1265-1273.


\end{thebibliography}
\end{document}